\title[]{Existence of ground states for free energies on the hyperbolic space}
\author[Carrillo]{Jos\'{e} A. Carrillo}
\address[Jos\'{e} A. Carrillo]{\newline Mathematical Institute, University of Oxford, Oxford OX2 6GG, UK}
\email{carrillo@maths.ox.ac.uk}
\author[Fetecau]{Razvan C. Fetecau}
\address[Razvan C. Fetecau]{\newline Department of Mathematics, Simon Fraser University, 8888 University Dr., Burnaby, BC V5A 1S6, Canada}
\email{van@math.sfu.ca}
\author[Park]{Hansol Park}
\address[Hansol Park]{\newline Department of Mathematics and Statistics, Dalhousie University, 6299 South St, Halifax, NS B3H 4R2, Canada}
\email{hansol960612@snu.ac.kr}
\newtheorem{theorem}{Theorem}[section]
\newtheorem{lemma}{Lemma}[section]
\newtheorem{proposition}{Proposition}[section]
\newtheorem{remark}{Remark}[section]
\newcommand{\bbr}{\mathbb R}
\newcommand{\bbh}{\mathbb H}
\newcommand{\calK}{\mathcal{K}}
\newcommand{\calM}{\mathcal{M}}
\newcommand{\calP}{\mathcal{P}}
\def\d{\mathrm{d}}
\newcommand{\dm}{n} 
\newcommand{\p}{o}
\newcommand{\h}{h}
\newcommand{\dist}{d}
\newcommand{\secc}{\mathcal{K}}
\begin{document}

\date{\today}

\subjclass[2020]{35A15, 35B38, 39B62, 58J90}
\keywords{free energy on manifolds, global minimizers, intrinsic interactions, HLS-type inequalities, hyperbolic space, Riesz' rearrangements, nonlinear diffusion}

\begin{abstract}
We investigate a free energy functional that arises in aggregation-diffusion phenomena modelled by nonlocal interactions and local repulsion on the hyperbolic space $\bbh^\dm$. The free energy consists of two competing terms: an entropy, corresponding to slow nonlinear diffusion, that favours spreading, and an attractive interaction potential energy that favours aggregation. We establish necessary and sufficient conditions
on the interaction potential for ground states to exist on the hyperbolic space $\bbh^\dm$. To prove our results we derived several Hardy-Littlewood-Sobolev (HLS)-type inequalities on general Cartan-Hadamard manifolds of bounded curvature, which have an interest in their own.
\end{abstract}

\maketitle \centerline{\date}

\section{Introduction}
In this work we investigate the existence of global minimizers of the following free energy functional on a Cartan-Hadamard manifold $M$:
\begin{align}
\label{eqn:energy-s}
E[\rho]= \frac{1}{m-1} \int_M \rho(x)^{m} \d x+\frac{1}{2}\iint_{M \times M} W(x, y)\rho(x)\rho(y)\d x \d y,
\end{align}
where $m>1$ and $W: M\times M \to \bbr$ is an attractive interaction potential. This definition holds for densities $\rho$  in $ L^m(M) \cap \calP(M)$, where $\calP(M)$ denotes the space of probability measures on $M$. Otherwise, $E[\rho]$ is defined by means of sequences $E[\rho_k]$, with $\rho_k \rightharpoonup \rho$ as $k \to \infty$ (see \eqref{eqn:energy} for the precise definition).

The minimizers of the energy functional \eqref{eqn:energy} are critical points of the following nonlocal evolution equation:
\begin{align}\label{eqn:model}
\partial_t\rho(x)- \nabla_M \cdot(\rho(x)\nabla_M W*\rho(x))=\Delta \rho^m(x),
\end{align}
where
\[
W*\rho(x)=\int_M W(x, y)\rho(y) \d y.
\]
Here, $\nabla_M \cdot$ and $\nabla_M $ represent the manifold divergence and gradient, respectively, $\Delta$ is the Laplace-Beltrami operator on $M$, and $\d x$ denotes the Riemannian volume measure on $M$. In particular, equation \eqref{eqn:model} can be formally expressed as the gradient flow of the energy $E$ on a suitable Wasserstein space \cite{AGS2005}.

The free energy functional \eqref{eqn:energy-s} and its corresponding gradient flow \eqref{eqn:model} (aggregation-diffusion equation) have been extensively studied on the Euclidean space ($M=\bbr^\dm$). The strong interest in this variational problem stems from its many applications in various fields, such as mathematical biology \cite{keller1970initiation, patlak1953random, TBL}, material science \cite{CaMcVi2006}, statistical mechanics \cite{sire2002thermodynamics, sire2008critical}, robotics \cite{Gazi:Passino, JiEgerstedt2007}, and social sciences \cite{MotschTadmor2014}. In such applications, the potential $W$ models social interactions (repulsion and attraction) and the nonlinear diffusion models repulsive/anticrowding effects. 

Energies in the form \eqref{eqn:energy-s} have two components: the first term is the entropy and the second is an interaction energy. In terms of dynamical evolution \eqref{eqn:model}, the entropy gives rise to nonlinear diffusion and the interaction energy to the aggregation term. In this paper we consider exclusively {\em attractive} interaction potentials, in which case the two terms are competing with each other. Specifically, diffusion promotes spreading and interaction leads to blow-up. Global energy minimizers exist when these two competing effects balance each other.

Depending on the range of $m$, one can classify the diffusion term $\Delta \rho^m$ into three cases: (i) slow diffusion ($m>1$), (ii) linear diffusion ($m=1$), and (iii) fast diffusion ($0<m<1$); for linear diffusion, the entropy term in \eqref{eqn:energy-s} is replaced by $\int_M \rho(x) \log \rho(x) \d x$. In the Euclidean space $\bbr^\dm$, each of these regimes has been studied extensively - we refer the reader to \cite{CarrilloCraigYao2019} for a comprehensive recent review on this subject.  In \cite{CalvezCarrilloHoffmann2017,CaHoMaVo2018,CaDePa2019} the authors considered a singular interaction potential of Riesz type, i.e., $W(x, y)=-\frac{1}{\beta\|x-y\|^\beta}$, where $\|\cdot\|$ denotes the Euclidean norm and $0<\beta<\dm$, and established existence or non-existence of global energy minimizers for various ranges of $m$. Uniqueness and qualitative properties (e.g., monotonicity and radial symmetry) of energy minimizers or steady states of \eqref{eqn:model} have been studied in \cite{Bedrossian11,Kaib17,CaHiVoYa2019,CCH21,DelgadinoXukaiYao2022}. Well-posedness, long time behaviour and properties of solutions to the dynamical model \eqref{eqn:model} were addressed in \cite{CarrilloCraigYao2019, CaHiVoYa2019, fernandez2023partial}. Relevant for our own work, we note that the key tools used in some of these works are Hardy-Littlewood-Sobolev (HLS)-type inequalites and Riesz rearrangement inequalities \cite{BCL09,CaHoMaVo2018,CaDePa2019,CaHiVoYa2019}. 

Without the diffusion term, model \eqref{eqn:model} has a large literature on its own, in particular for its setup in Euclidean spaces $M=\bbr^\dm$, see for instance \cite{review,CV15} and the references therein for some of the results in the literature. The well-posedness of the model without diffusion on general and specific manifolds  was studied in \cite{FePaPa2020, FePa2021, WuSlepcev2015} and emergent behaviours were studied in \cite{FeZh2019,Ha-hyperboloid} (hyperbolic space), \cite{FeHaPa2021} (the special orthogonal group $SO(3)$), \cite{HaKaKi2022} (Stiefel manifolds), and \cite{FePa2023b} (general Riemannian manifolds of bounded curvature). Furthermore, a variational approach was taken in \cite{FePa2023a} to study equilibria and energy minimizers of the aggregation model on the hyperbolic space. 

In this work, we investigate the existence of global minimizers of the energy \eqref{eqn:energy-s} via a variational approach rather than the dynamical model \eqref{eqn:model}. Throughout the paper we assume that the interaction potential is purely attractive and that it is in the form $W(x, y)=h(\dist(x, y))$, where $h$ is a non-decreasing and lower-semicontinuous function, and $\dist$ denotes the geodesic distance on the manifold. Using the geodesic distance for modelling interactions is very natural since it makes the model completely intrinsic to the given manifold. The class of potentials we consider includes Riesz-type potentials, as studied in \cite{BCL09,CaHoMaVo2018,CaDePa2019} for $\bbr^\dm$. In studying the existence of energy minimizers we only consider interaction potentials $h(\theta)$ that are less singular than $-\frac{1}{\theta^{\min(\dm(m-1),\dm)}}$ at origin; otherwise, attraction is too singular and the energy functional is not bounded below. As shown in our results, this threshold in the singular behaviour of the interaction potential is sharp for existence/non-existence of ground states on $\bbh^\dm$. A similar threshold property holds also in the Euclidean case \cite{CaDePa2019}.

We also note that in the general manifold setup, instead of the nonlocal interaction term in equation \eqref{eqn:model}, local reaction terms have been considered in \cite{GrilloMeglioliPunzo2023} (with linear diffusion) and \cite{GrilloMeglioliPunzo2021a, GrilloMeglioliPunzo2021b, GrilloMuratoriVazquez2017, GrilloMuratoriVazquez2019} (with nonlinear diffusion). In these papers, the authors also need to deal with the competing effects of a local reaction term that leads to blow-up and the diffusion that results in spreading. Issues such as global existence versus blow-up, as well as the long-time behaviour of solutions have been investigated. The methods used in these papers are specific to local reaction terms however.

To the best of our knowledge, our study is the first to consider the manifold setup for free energies containing both nonlocal intrinsic interactions and nonlinear diffusion. The contributions of our paper are the following. First, we derive several Hardy-Littlewood-Sobolev (HLS)-type inequalities on general Cartan-Hadamard manifolds with bounded sectional curvatures. Second, we investigate the existence of ground states of the energy functional \eqref{eqn:energy-s} on the hyperbolic space $\bbh^\dm$, with $m>1$ (slow diffusion regime). Our main results show the existence of global energy minimizers on the hyperbolic space $\bbh^\dm$ under two behaviors of $h$ at $\infty$, namely: $\lim_{\theta\to\infty}h(\theta)=\infty$ (Theorem \ref{thm:exist-inf}) and $\lim_{\theta\to\infty}h(\theta)=0$ (Theorem \ref{thm:exist-zero}). In this regard, the methods developed hereafter can also be used to generalize the results in $\bbr^\dm$ to general interaction potentials not necessarily of Riesz-type (see Remark \ref{rmk:generalize}).

Some of the crucial ingredients in our study are the new HLS-type inequalities that we established together with Riesz rearrangement inequalities on $\bbh^\dm$ \cite{BurchardSchmuckenschlager2001}. In particular, the latter inequalities enable us to restrict the minimizing sequences to be radially symmetric and decreasing about a pole. The setup $M=\bbh^\dm$ for our existence results is motivated by various specific properties of the hyperbolic space that are used in our proofs: the availability of the Riesz rearrangement inequality and the fact that $\bbh^\dm$ is a homogeneous manifold (i.e., for any choice of two points on it, there is an isometry of $\bbh^\dm$ that sends one of the two points to the other).

Variational approaches to the free energy functional with linear diffusion on Cartan--Hadamard manifolds were recently considered in \cite{FePa2024a, FePa2024b}. The main tool there was also based on generalizations of HLS-type inequalities from the Euclidean space to Cartan-Hadamard manifolds; in the case of linear diffusion one needs to work instead with a logarithmic HLS inequality \cite{carlen1992competing}. The approach in \cite{FePa2024a, FePa2024b} is to use the Riemannian exponential map (which for Cartan-Hadamard manifolds is a global diffeomorphism) and comparison theorems in differential geometry, to carry an HLS inequality from the tangent space of the manifold at a fixed pole, to the entire manifold. This approach has to be significantly adjusted to the nonlinear diffusion case using the Vitali covering lemma (see Lemma \ref{Vclem}).

This paper is organized as follows. In Section \ref{sect:prelim}, we present assumptions, definitions, and preliminary background on differential geometry and rearrangement inequalities that is needed for the main results. Several HLS-type inequalities on general Cartan--Hadamard manifolds are derived in Section \ref{sect:HLS}. The rest of the paper deals exclusively with the case $M=\bbh^\dm$. In Section \ref{sect:lsc}, we establish the weak lower-semicontinuity of the energy functional. The existence of global energy minimizers are proven in Sections \ref{sect:existence-inf}  and \ref{sect:existence-0}, respectively -- see Theorems \ref{thm:exist-inf} and \ref{thm:exist-zero}. Finally, we provided proofs of several results in the Appendix.


\section{Assumptions and preliminary background}
\label{sect:prelim}
\setcounter{equation}{0}

\subsection{Assumptions and definitions}
\label{subsect:assumptions}
The results in Section \ref{sect:HLS} apply to general Cartan-Hadamard manifolds, while Sections \ref{sect:lsc}-\ref{sect:existence-0} are for the specific case when the manifold is the hyperbolic space. The general assumptions on the manifold $M$ and the interaction potential $W$ are the following.
\smallskip

\noindent(\textbf{M}) $M$ is an $\dm$-dimensional Cartan-Hadamard manifold, i.e., $M$ is complete, simply connected, and has everywhere non-positive sectional curvature. We denote its intrinsic distance by $\dist$ and sectional curvature by $\calK$. In particular, for $x$ a point in $M$ and $\sigma$ a two-dimensional subspace of $T_x M$, $\calK(x;\sigma)$ denotes the sectional curvature of $\sigma$ at $x$.
\smallskip

\noindent\textbf{(H)} The interaction potential $W:M\times M \to \bbr$ is assumed to be in the form 
\[
W(x, y)=h(\dist(x, y)), \qquad \text{ for all } x,y \in M,
\]
where $h:[0, \infty)\to[-\infty, \infty)$ is non-decreasing and lower-semicontinuous. Note that the interactions are purely attractive, and also, that $h$ can be singular at origin. 

Since $h$ is non-decreasing, we have two cases for the behaviour of $h$ at infinity. Denote 
\[
\h_\infty = \lim_{\theta\to\infty}h(\theta);
\] 
then we can have either $h_\infty=\infty$ or $h_\infty<\infty$. For the latter case, without loss of generality, we can assume $h_\infty=0$. Hence, we will consider only these two cases: 
\begin{equation}
\label{eqn:h-infty}
h_\infty=\infty, \qquad \text{ or } \qquad h_\infty=0.
\end{equation}

Let us make precise now the definition of the energy functional. We define $E: \calP(M) \to [-\infty, \infty]$ by
\begin{equation}
\label{eqn:energy}
E[\rho]=\begin{cases}
\displaystyle \frac{1}{m-1}\int_M \rho(x)^m\d x +\frac{1}{2}\iint_{M \times M}  h(\dist(x, y))\rho(x)\rho(y) \d x\d y, & \text{ if  }\rho\in L^m(M) \cap \mathcal{P}(M),\\[10pt]
\displaystyle\inf_{\substack{\rho_k \in L^m(M) \cap \calP(M) \\ \rho_k \rightharpoonup \rho}} \liminf_{k\to \infty} E[\rho_k], & \text{ otherwise}.
\end{cases}
\end{equation}
We denote by $\calP_{ac}(M)\subset \calP(M)$ the space of probability measures that are absolutely continuous with respect to the Riemannian volume measure $\d x$; note that $L^m(M) \cap \calP(M) \subset \calP_{ac}(M)$.  Also, $\rho_k \rightharpoonup \rho$ denotes weak-$^*$ convergence of measures, i.e.,
\[
\int_M f(x) \rho_k(x) \d x \to \int_M f(x) \rho(x) \d x, \qquad \text{ as } k\to  \infty,
\]
for all $f \in C_0(M)$, where $C_0(M)$ denotes the space of continuous functions on $M$ that vanish at infinity.

The weak-$^*$ topology on $\calP(M)$ (regarded as a subset of the space of Radon measures $\calM(M)$) is induced by the duality with the set $C_0(M)$. Hence, by Banach-Alaoglu theorem, the following compactness property of the weak-$^*$ topology holds \cite{CaDePa2019}.
\begin{theorem}[Compactness of weak-$^*$ topology]\label{CWT}
Let $\{\rho_k\}_{k\in\mathbb{N}}$ be a sequence in $\mathcal{P}(M)$. Then, there exists a subsequence of $\{\rho_k\}_{k\in\mathbb{N}}$ which converges weakly-$^*$ to some $\rho_0\in \mathcal{M}_+(M)$ such that $\int_M\rho_0(x)\d x\leq 1$, where $\calM_+(M)$ denotes the space of nonnegative Radon measures on $M$.
\end{theorem}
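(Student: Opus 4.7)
The plan is to identify the compactness claim as a direct consequence of the Banach--Alaoglu theorem applied to the dual space $(C_0(M))^*$, together with the separability of $C_0(M)$. The chain of observations is standard, so the focus will be on ordering the steps carefully and pinning down the non-negativity and mass bound of the limit, which is the only mildly delicate point.

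First I would observe that, since $M$ is a smooth manifold, it is second countable, locally compact, and Hausdorff; in particular $C_0(M)$ is a separable Banach space. By the Riesz--Markov--Kakutani representation theorem, the dual $(C_0(M))^*$ is isometrically identified with the space of signed Radon measures $\mathcal{M}(M)$ equipped with the total variation norm, so each $\rho_k \in \mathcal{P}(M)$ corresponds to an element of the closed unit ball of $(C_0(M))^*$ (its norm equals the total mass, which is $1$).

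Next I would invoke Banach--Alaoglu, which yields that the closed unit ball of $(C_0(M))^*$ is weak-$^*$ compact. Combined with the separability of $C_0(M)$, this ball is metrizable in the weak-$^*$ topology, hence sequentially compact. Therefore some subsequence $\{\rho_{k_j}\}$ converges weak-$^*$ to a Radon measure $\rho_0 \in \mathcal{M}(M)$ with $\|\rho_0\|_{(C_0(M))^*} \leq 1$.

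The remaining step, which I expect to be the only place requiring a short argument rather than a direct citation, is to show that $\rho_0$ is nonnegative and that $\int_M \rho_0\,\d x \leq 1$. For nonnegativity, for every $f \in C_0(M)$ with $f \geq 0$ we have $\int_M f(x)\,\rho_{k_j}(x)\,\d x \geq 0$, and passing to the limit gives $\int_M f(x)\,\rho_0(x)\,\d x \geq 0$; the Riesz representation theorem then ensures $\rho_0 \in \mathcal{M}_+(M)$. For the mass bound, on nonnegative Radon measures the dual norm coincides with the total mass, so $\int_M \rho_0(x)\,\d x = \|\rho_0\|_{(C_0(M))^*} \leq 1$, completing the proof. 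No obstacle beyond standard functional-analytic bookkeeping is anticipated; the only subtlety worth flagging is that mass can be lost at infinity, which is precisely why the conclusion is $\int_M \rho_0 \leq 1$ rather than equality.
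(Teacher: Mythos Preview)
Your proposal is correct and follows exactly the approach the paper indicates: the paper does not give a detailed proof of this theorem but simply states that it holds ``by Banach--Alaoglu theorem'' with a citation, and your argument (Riesz--Markov--Kakutani to identify $(C_0(M))^*$ with $\mathcal{M}(M)$, separability of $C_0(M)$ to upgrade weak-$^*$ compactness to sequential compactness, then nonnegativity and the mass bound from the dual norm) is precisely the standard way to unpack that citation.
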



\subsection{Comparison results in differential geometry}\label{subsec:diff-geom}
In this subsection we present some comparison results in Riemannian geometry, which are used in the paper. Recall that $M$ is assumed to be a Cartan-Hadamard manifold, and hence, it has no conjugate points, it is diffeomorphic to $\bbr^\dm$ and the exponential map at any point is a global diffeomorphism.

\subsubsection{Rauch comparison theorem}
One tool in our proofs is Rauch's comparison theorem, which enables us to compare lengths of curves on different manifolds. The result is the following \cite[Chapter 10, Proposition 2.5]{doCarmo1992}.

\begin{theorem}[Rauch comparison theorem]\label{RCT}
Let $M$ and $\tilde{M}$ be two $\dm$-dimensional Riemannian manifolds and suppose that for all $p\in M$, $\tilde{p}\in \tilde{M}$, and $\sigma\subset T_pM$, $\tilde{\sigma}\subset T_{\tilde{p}}\tilde{M}$, the sectional curvatures $\secc$ and $\tilde{\secc}$ of $M$ and $\tilde{M}$, respectively, satisfy
\[
\tilde{\secc}(\tilde{p}; \tilde{\sigma})\geq \secc(p;\sigma).
\]
Let $p\in M$, $\tilde{p}\in\tilde{M}$ and fix a linear isometry $i:T_pM\to T_{\tilde{p}}\tilde{M}$. Let $r>0$ be such that the restriction ${\exp_p}_{|B_r(0)}$ is a diffeomorphism and ${\exp_{\tilde{p}}}_{|\tilde{B}_r(0)}$ is non-singular. Let $c:[0, a]\to\exp_p(B_r(0))\subset M$ be a differentiable curve and define a curve $\tilde{c}:[0, a]\to\exp_{\tilde{p}}(\tilde{B}_r(0))\subset\tilde{M}$ by
\[
\tilde{c}(s)=\exp_{\tilde{p}}\circ i\circ\exp^{-1}_p(c(s)),\qquad s\in[0, a].
\]  
Then the length of $c$ is greater or equal than the length of $\tilde{c}$.
\end{theorem}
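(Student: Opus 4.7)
The plan is to prove the length inequality pointwise, that is, to show $|\tilde{c}'(s)|_{\tilde{M}} \leq |c'(s)|_M$ for every $s\in[0,a]$, and then integrate. Set $v(s)=\exp_p^{-1}(c(s))\in T_pM$ and $\tilde{v}(s)=i(v(s))\in T_{\tilde{p}}\tilde{M}$, so that $c(s)=\exp_p(v(s))$ and $\tilde{c}(s)=\exp_{\tilde{p}}(\tilde{v}(s))$. By the chain rule, $c'(s)=d(\exp_p)_{v(s)}(v'(s))$ and $\tilde{c}'(s)=d(\exp_{\tilde{p}})_{\tilde{v}(s)}(i(v'(s)))$. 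Splitting $v'(s)$ into its radial component (parallel to $v(s)$) and its tangential component (orthogonal to $v(s)$), the Gauss lemma implies that $d(\exp_p)$ preserves the radial part isometrically and maps the tangential part to a vector orthogonal to the radial direction at $c(s)$; the same holds on $\tilde{M}$. Since $i$ is a linear isometry, the radial parts contribute equally to $|c'(s)|^2$ and $|\tilde{c}'(s)|^2$, and the pointwise inequality reduces to a comparison of the tangential parts.

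The tangential contribution is encoded by a Jacobi field. Fix $s$, let $t_0=|v(s)|$, $w=v(s)/t_0$, and let $\eta$ be the component of $v'(s)$ perpendicular to $v(s)$. Along the radial geodesic $\gamma(t)=\exp_p(tw)$, let $J$ be the Jacobi field with $J(0)=0$ and $J'(0)=\eta/t_0$; then $d(\exp_p)_{v(s)}(\eta)=J(t_0)$. Similarly, let $\tilde{J}$ be the Jacobi field along $\tilde{\gamma}(t)=\exp_{\tilde{p}}(t\, i(w))$ with $\tilde{J}(0)=0$ and $\tilde{J}'(0)=i(\eta/t_0)$, so that $|\tilde{J}'(0)|=|J'(0)|$. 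The core analytic step is the Jacobi-field version of Rauch's inequality: under $\tilde{\secc}\geq \secc$ and the absence of conjugate points on $[0,t_0]$, one has $|\tilde{J}(t_0)|\leq |J(t_0)|$. Combined with the radial equality from the Gauss lemma, this yields $|\tilde{c}'(s)|\leq |c'(s)|$, and integration on $[0,a]$ concludes the proof.

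The main obstacle is establishing the comparison $|\tilde{J}(t)|\leq |J(t)|$ for Jacobi fields vanishing at $t=0$ with isometric initial derivatives. The natural route is via the index form
\[
I_{t_0}(X,X)=\int_0^{t_0}\bigl(|X'|^2-\langle R(X,\gamma')\gamma',X\rangle\bigr)\,dt,
\]
together with the index lemma, which states that among all piecewise smooth vector fields along $\gamma$ with prescribed boundary values, a Jacobi field (when no interior conjugate point exists) minimizes $I_{t_0}$. Transporting a rescaled copy of $J/|J|$ from $M$ to $\tilde{M}$ via parallel transport along $\tilde\gamma$ and the isometry $i$, and using $\tilde{\secc}\geq \secc$ to bound the curvature term, one produces a competitor on $\tilde{M}$ whose index dominates $I_{t_0}(\tilde J,\tilde J)$ from above while its norm equals $|J(t_0)|$; a Sturm-type comparison for $|J(t)|^2$ and $|\tilde J(t)|^2$ then delivers the inequality. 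The hypotheses that ${\exp_p}_{|B_r(0)}$ is a diffeomorphism and ${\exp_{\tilde p}}_{|\tilde B_r(0)}$ is non-singular are precisely what guarantees the non-existence of conjugate points on $[0,t_0]$ needed for the index lemma to apply on both manifolds.
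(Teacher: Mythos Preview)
The paper does not prove this theorem: it is quoted as a known result from do Carmo \cite[Chapter 10, Proposition 2.5]{doCarmo1992}, so there is no in-paper proof to compare against. Your outline follows the standard route found in that reference: reduce to the pointwise estimate $|\tilde c'(s)|\le |c'(s)|$ via the chain rule, use the Gauss lemma to peel off the radial part, express the tangential contribution through Jacobi fields along the radial geodesics, and then invoke the Jacobi-field form of Rauch's inequality obtained from the index form and the index lemma. That is exactly the architecture of the classical proof, and your identification of the no-conjugate-point hypotheses with the diffeomorphism/non-singularity assumptions is correct.

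One expository point deserves tightening. In your final paragraph you speak of ``transporting a rescaled copy of $J/|J|$ from $M$ to $\tilde M$''; as stated this is ambiguous, since the two manifolds are unrelated away from the identification $i$ of tangent spaces at the base points. The precise construction is to choose parallel orthonormal frames $\{e_i(t)\}$ along $\gamma$ and $\{\tilde e_i(t)\}$ along $\tilde\gamma$ that correspond under $i$ at $t=0$, write $J(t)=\sum_i a_i(t)e_i(t)$, and define the competitor $\tilde X(t)=\sum_i a_i(t)\tilde e_i(t)$ along $\tilde\gamma$. Then $|\tilde X|=|J|$, $|\tilde X'|=|J'|$, and the curvature hypothesis gives $\langle \tilde R(\tilde X,\tilde\gamma')\tilde\gamma',\tilde X\rangle\ge \langle R(J,\gamma')\gamma',J\rangle$, whence $\tilde I_{t_0}(\tilde X,\tilde X)\le I_{t_0}(J,J)$. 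The index lemma on $\tilde M$ then yields $\tilde I_{t_0}(\tilde J,\tilde J)\le \tilde I_{t_0}(\tilde X,\tilde X)$, and comparing these with the Jacobi-field identity $I_{t_0}(J,J)=\langle J'(t_0),J(t_0)\rangle$ leads (after the usual monotonicity argument for $|\tilde J|^2/|J|^2$) to $|\tilde J(t_0)|\le |J(t_0)|$. With this clarification your sketch is complete and matches the textbook proof.
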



\subsubsection{Bounds on the Jacobian of the exponential map and volume comparison theorems.}
On manifolds with bounded sectional curvatures, the Jacobian $J(\exp_x)$ of the Riemannian exponential map can be bounded below and above as follows \cite{Chavel2006}.

\begin{theorem}(\cite[Theorems III.4.1 and III.4.3]{Chavel2006}) 
\label{lemma:Chavel-thms}
Suppose the sectional curvatures of $M$ satisfy
\begin{equation}
-c_m\leq\calK(x;\sigma)\leq -c_M<0,
\label{eqn:const-bounds}
\end{equation}
for all $x \in M$ and all two-dimensional subspaces $\sigma \subset T_x M$, where $c_m$ and $c_M$ are positive constants. Then, for any $x \in M$ and $u \in T_x M$, it holds that 
\[
\left(
\frac{\sinh(\sqrt{c_M}\|u\|)}{\sqrt{c_M}\|u\|}
\right)^{\dm-1}\leq |J(\exp_x)(u)|\leq\left(
\frac{\sinh(\sqrt{c_m}\|u\|)}{\sqrt{c_m}\|u\|}
\right)^{\dm-1}.
\]
\end{theorem}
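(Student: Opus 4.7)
The natural route is via the Jacobi-field representation of $d(\exp_x)$. Fix $x \in M$, $u \in T_x M$, and let $\gamma(s) = \exp_x(su)$ for $s \in [0,1]$. Complete $u/\|u\|$ to an orthonormal basis $\{u/\|u\|, e_1, \ldots, e_{\dm-1}\}$ of $T_x M$, parallel transport it along $\gamma$ to $\{\dot\gamma(s)/\|u\|, E_1(s), \ldots, E_{\dm-1}(s)\}$, and let $J_i$ be the Jacobi field along $\gamma$ with $J_i(0) = 0$ and $J_i'(0) = e_i$. A standard variation computation using $(s,t) \mapsto \exp_x(s(u + t e_i))$ gives $d(\exp_x)_u(e_i) = J_i(1)$, while $d(\exp_x)_u(u/\|u\|) = \dot\gamma(1)/\|u\|$. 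Writing $J_i(s) = \sum_j A_{ji}(s) E_j(s)$, each $J_i$ remains orthogonal to $\dot\gamma$, so the Gram matrix of the image basis is block-diagonal and the Jacobian reduces to
\[
|J(\exp_x)(u)| = |\det A(1)|,
\]
where the $(\dm-1)\times(\dm-1)$ matrix $A(s)$ satisfies the matrix Jacobi equation $A''(s) + \calR(s) A(s) = 0$ with $A(0) = 0$ and $A'(0) = I$, and $\calR(s)$ is the symmetric curvature operator $\calR(s)_{ij} = \langle R(E_i(s), \dot\gamma(s))\dot\gamma(s), E_j(s)\rangle$.

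On a space of constant sectional curvature $-c$, one has $\calR(s) \equiv c\|u\|^2 I$; the equation decouples, giving $A(s) = (\sinh(\sqrt{c}\|u\| s)/(\sqrt{c}\|u\|)) I$ and a Jacobian equal to $(\sinh(\sqrt{c}\|u\|)/(\sqrt{c}\|u\|))^{\dm-1}$. Under the assumption $-c_m \leq \calK \leq -c_M$ along $\gamma$, the Loewner-order bounds $c_M\|u\|^2 I \leq \calR(s) \leq c_m\|u\|^2 I$ hold. The bounds on $|\det A(1)|$ then follow from a matrix Sturm-Picone (Riccati) comparison theorem, which provides monotone dependence of $\det A$ on the coefficient $\calR$: a smaller $\calR$ produces a larger $\det A(1)$. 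Comparing with $\calR \equiv c_M\|u\|^2 I$ yields the lower bound on the Jacobian, and comparing with $\calR \equiv c_m\|u\|^2 I$ yields the upper bound.

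The main obstacle is the matrix Riccati comparison itself. The classical argument substitutes $U(s) = A'(s)A(s)^{-1}$, a symmetric matrix satisfying the Riccati equation $U' + U^2 + \calR = 0$, and proves monotonicity of $U$ in $\calR$ in the Loewner order, from which monotonicity of $\det A(s) = \exp\bigl(\int_0^s \mathrm{tr}\, U(\tau)\,\mathrm{d}\tau\bigr)$ follows. The Cartan-Hadamard hypothesis is crucial here: since there are no conjugate points, $A(s)$ remains invertible for all $s > 0$ and the Riccati substitution is globally valid on $(0,1]$. The scalar version of Rauch (the infinitesimal form of Theorem \ref{RCT}) handles each $\|J_i(s)\|$ individually but does not by itself yield the product bound, because on a general manifold the Jacobi fields $J_i(s)$ need not remain mutually orthogonal for $s > 0$; the matrix Riccati approach bypasses this issue by directly comparing the determinant.
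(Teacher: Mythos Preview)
The paper does not supply a proof of this theorem; it is quoted verbatim from Chavel's textbook (the citation in the theorem header). Your outline via Jacobi fields and the matrix Riccati comparison is precisely the argument given there, so there is nothing to contrast.

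One internal sign slip to fix: with the matrix Jacobi equation written as $A'' + \calR A = 0$ and your definition $\calR_{ij} = \langle R(E_i,\dot\gamma)\dot\gamma, E_j\rangle$, a space of constant curvature $-c$ yields $\calR = -c\|u\|^2 I$, not $+c\|u\|^2 I$; equivalently, if you want $\calR = c\|u\|^2 I$ the equation must read $A'' - \calR A = 0$. As written, your pair would produce $\sin$ rather than $\sinh$. The Loewner bounds and the direction of the Riccati monotonicity inherit the same sign flip. This does not affect the strategy or the final inequalities, only the intermediate bookkeeping.
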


A point on a manifold is called a {\em pole} if the exponential map at that point is a global diffeomorphism. On Cartan-Hadamard manifolds, all points are poles. For a fixed pole $\p$ on $M$, we denote 
\[
\theta_x := \dist(o,x), \qquad \text{ for all } x \in M.
\]
The bounds in Theorem \ref{lemma:Chavel-thms} can be used to bound the volume of geodesic balls in $M$ \cite{Chavel2006}. Denote by $B_\theta(\p)$ the open geodesic ball centred at $\p$ of radius $\theta$, and by $|B_\theta(\p)|$ its volume. The following volume comparison result hold.
\begin{theorem}(\cite[Theorems III.4.2 and III.4.4]{Chavel2006})
\label{cor:AV-bounds}
Suppose the sectional curvatures of $M$ satisfy \eqref{eqn:const-bounds}. Then, 
\[
 \dm w(\dm)\int_0^\theta \left(\frac{\sinh(\sqrt{c_M}t)}{\sqrt{c_M}}\right)^{\dm-1} \d t \leq |B_{\theta}(\p)|\leq \dm w(\dm) \int_0 ^\theta \left(\frac{\sinh(\sqrt{c_m} t)}{\sqrt{c_m}}\right)^{\dm-1} \d t,
\]
where $w(\dm)$ denotes the volume of the unit ball in $\bbr^\dm$.
\end{theorem}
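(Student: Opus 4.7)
The plan is to deduce the volume bounds directly from the pointwise Jacobian bounds of the previous Theorem~\ref{lemma:Chavel-thms} by integrating in geodesic polar (normal) coordinates centred at the pole $\p$. Since $M$ is Cartan--Hadamard, $\exp_\p : T_\p M \to M$ is a global diffeomorphism, so every point of $B_\theta(\p)$ is reached uniquely as $\exp_\p(u)$ with $\|u\|<\theta$. Identifying $T_\p M$ with $\bbr^\dm$ by a linear isometry and writing $u=t\xi$ with $t\in[0,\theta)$ and $\xi\in S^{\dm-1}$, the Riemannian volume element pulls back to
\[
\d x \;=\; |J(\exp_\p)(t\xi)|\, t^{\dm-1}\,\d t\,\d\xi,
\]
where $\d\xi$ is the standard surface measure on $S^{\dm-1}$.

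The next step is to insert the Jacobian bounds from Theorem~\ref{lemma:Chavel-thms}. Using the upper bound, for each $t\in[0,\theta)$ and each $\xi\in S^{\dm-1}$,
\[
|J(\exp_\p)(t\xi)|\,t^{\dm-1}
\;\le\;\Bigl(\tfrac{\sinh(\sqrt{c_m}\,t)}{\sqrt{c_m}\,t}\Bigr)^{\dm-1}t^{\dm-1}
\;=\;\Bigl(\tfrac{\sinh(\sqrt{c_m}\,t)}{\sqrt{c_m}}\Bigr)^{\dm-1}.
\]
The factor $t^{\dm-1}$ cancels cleanly against the $\|u\|^{\dm-1}$ appearing in the denominator of the bound; this is the reason the final formula is so clean. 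Similarly, the lower Jacobian bound produces the integrand with $c_M$ in place of $c_m$.

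Having obtained radially symmetric integrands, the angular integration produces the full surface area $|S^{\dm-1}|=\dm\,w(\dm)$. Integrating over the ball,
\[
|B_\theta(\p)| \;=\; \int_0^\theta\!\!\int_{S^{\dm-1}} |J(\exp_\p)(t\xi)|\,t^{\dm-1}\,\d\xi\,\d t,
\]
and combining with the two Jacobian bounds yields the double inequality in the statement.

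The only place where care is needed is in justifying the polar-coordinates identity and the fact that the integrand is indeed nonnegative so that monotonicity of the integral can be applied term-wise; both of these facts are standard consequences of $\exp_\p$ being a global diffeomorphism on a Cartan--Hadamard manifold (so $J(\exp_\p)$ does not vanish and the pull-back formula holds without cut-locus corrections). No real obstacle is expected, the whole argument being a direct integration of the already available Jacobian comparison.
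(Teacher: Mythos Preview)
Your argument is correct and is exactly the standard derivation: integrate the pointwise Jacobian bounds of Theorem~\ref{lemma:Chavel-thms} in geodesic polar coordinates about $\p$, using that $\exp_\p$ is a global diffeomorphism on a Cartan--Hadamard manifold. The paper does not give its own proof of this statement; it simply quotes the result from \cite[Theorems III.4.2 and III.4.4]{Chavel2006}, where the proof proceeds along precisely the lines you describe.
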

We note that the two bounds in Theorem \ref{cor:AV-bounds} represent the volume of the ball of radius $\theta$ in the hyperbolic space $\bbh^\dm$ of constant curvatures $-c_M$, and $-c_m$, respectively.

\begin{remark}
\label{rmk:vol-Rd}
A limiting case of the curvature upper bound \eqref{eqn:const-bounds} is $c_M=0$, i.e., when the sectional curvatures of $M$ satisfy
\begin{equation}
\label{eqn:K-nonpos}
\calK(x;\sigma) \leq 0,
\end{equation}
for all $x \in M$ and all two-dimensional subspaces $\sigma \subset T_x M$. In such case, the volume of geodesic balls in $M$ is bounded below by the volume of balls of the same radius in the Euclidean space $\bbr^\dm$:
\begin{equation}
\label{eqn:comp-Rd}
|B_\theta(\p)|\geq w(\dm)\theta^\dm.
\end{equation}
\end{remark}


\subsection{General variational considerations}
\label{subsect:general-var}

It is not difficult to infer that the attractive potential $h$ cannot be too singular at origin, or else blow-up occurs and global energy minimizers cannot exist. The formal result is the following, with proof presented in the Appendix.
\begin{proposition}[Non-existence of a global minimizer: attraction too singular at origin]
\label{prop:nonexist}
Let $m>1$ and assume the manifold and the interaction potential satisfy assumptions \textbf{(M)} and \textbf{(H)}, respectively. If $h$ satisfies either
\begin{equation}
\label{eqn:cond1}
\lim_{\theta\to 0+}\left(h(\theta)+\left(\frac{2^{\dm(m-1)+1}}{(m-1)w(\dm)^{m-1}}\right)\frac{1}{\theta^{\dm(m-1)}}\right)=-\infty
\end{equation}
or
\begin{equation}
\label{eqn:cond2}
\liminf_{\theta\to0+}\theta^\dm h(\theta)<0,
\end{equation}
then the energy functional \eqref{eqn:energy} has no global minimizer in $\calP(M)$. 
\end{proposition}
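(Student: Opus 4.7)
The plan is to construct, for each of the two sufficient conditions, an explicit test density that certifies $\inf_{\calP(M)} E = -\infty$. In both cases the test density will be the uniform probability measure $\rho_r = \mathbf{1}_{B_r(\p)}/|B_r(\p)|$ on a geodesic ball around an arbitrary pole $\p \in M$; this lies in $L^m(M) \cap \calP(M)$, so the explicit formula in the first line of \eqref{eqn:energy} applies. The only geometric input needed is the Euclidean lower bound $|B_r(\p)| \geq w(\dm) r^\dm$ for geodesic ball volumes on a Cartan-Hadamard manifold (Remark \ref{rmk:vol-Rd}).

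Under condition \eqref{eqn:cond1}, I would let $r \to 0+$. The entropy piece is bounded by $\tfrac{1}{(m-1)|B_r(\p)|^{m-1}} \leq \tfrac{1}{(m-1)w(\dm)^{m-1}r^{\dm(m-1)}}$. Since $\dist(x,y) \leq 2r$ for $(x,y) \in B_r(\p) \times B_r(\p)$ and $h$ is non-decreasing, the interaction term is at most $\tfrac{1}{2}h(2r)$. Setting $\theta = 2r$ and multiplying by $2$, the resulting bound
\[
2E[\rho_r] \;\leq\; h(\theta) + \frac{2^{\dm(m-1)+1}}{(m-1)w(\dm)^{m-1}}\cdot\frac{1}{\theta^{\dm(m-1)}}
\]
is exactly the expression appearing in \eqref{eqn:cond1} and so diverges to $-\infty$ by hypothesis; hence no minimizer can exist.

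Under condition \eqref{eqn:cond2} I would instead fix $r > 0$ once and for all, and show that $\iint h(\dist(x,y))\,\rho_r(x)\,\rho_r(y)\,\d x\,\d y = -\infty$, which (since the entropy is finite) forces $E[\rho_r] = -\infty$. Condition \eqref{eqn:cond2} supplies $c > 0$ and a sequence $\theta_k \to 0+$ with $T_k := -h(\theta_k) \geq c/\theta_k^\dm$; by thinning, I may assume $T_k \geq 2T_{k-1}$. Applying the layer cake formula to the nonnegative function $\max(-h(\dist),0)$ on $B_r(\p) \times B_r(\p)$, the inclusion $\{h(\dist) < -t\} \supset \{\dist \leq \theta_k\}$ for $t < T_k$ (from monotonicity of $h$) together with the lower bound $|\{(x,y) \in B_r(\p) \times B_r(\p) : \dist(x,y) \leq \theta_k\}| \geq c_r\,\theta_k^\dm$ (applying Remark \ref{rmk:vol-Rd} to balls of radius $\theta_k$ centred inside $B_{r-\theta_k}(\p)$) makes each slab $[T_{k-1}, T_k)$ contribute at least $c_r\,\theta_k^\dm (T_k/2) \geq c\,c_r/2$ to the layer cake integral; summing over $k$ yields $+\infty$.

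The main obstacle will be case \eqref{eqn:cond2}: the crude pointwise bound $h(\dist) \leq h(2r)$ used in the first case is too weak to exploit only a $\liminf$-type singularity of $h$. The idea is to use the singularity at \emph{all} scales $\theta_k$ simultaneously via layer cake, and the thinning $T_k \geq 2T_{k-1}$ is the key device that ensures the contribution at scale $\theta_k$ is not absorbed by the preceding scale, so that the resulting series actually diverges.
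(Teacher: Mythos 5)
Your proposal is correct. For condition \eqref{eqn:cond1} it coincides with the paper's proof: the same uniform test density on $B_r(\p)$, the same entropy bound via $|B_r(\p)|\ge w(\dm)r^{\dm}$, the same interaction bound $h(2r)/2$ from monotonicity, and the same substitution $\theta=2r$ producing exactly the constant in \eqref{eqn:cond1}. For condition \eqref{eqn:cond2} your route is genuinely different and, in one respect, more careful. The paper's Appendix asserts a \emph{pointwise} bound $\theta^{\dm}h(\theta)\le-\alpha$ on an entire interval $(0,2\theta_0)$ and then obtains a logarithmic divergence from $\int_\delta^{\theta_0/2}r^{-\dm}|\partial B_r(\p)|\,\d r\ge \dm w(\dm)\int_\delta^{\theta_0/2}r^{-1}\,\d r$. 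But for a non-decreasing $h$, the hypothesis $\liminf_{\theta\to0+}\theta^{\dm}h(\theta)<0$ only guarantees $\theta_k^{\dm}h(\theta_k)\le-c$ along a sequence; between consecutive scales the product $\theta^{\dm}h(\theta)$ can drift back to $0$ (take $h\equiv-\theta_k^{-\dm}$ on $(\theta_{k+1},\theta_k]$ with $\theta_{k+1}/\theta_k\to0$), so the pointwise bound needs an additional reduction that the paper does not spell out. Your layer-cake decomposition with the dyadic thinning $T_k\ge 2T_{k-1}$ avoids this entirely: it extracts a fixed positive contribution $c\,c_r/2$ from each singular scale $\theta_k$ separately, so the argument runs verbatim under the bare $\liminf$ hypothesis. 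The price is a somewhat longer bookkeeping argument, where the paper's computation, whenever its pointwise bound is actually available, is a one-line polar-coordinate integral; the payoff is that your version proves the stated proposition in full generality.
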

\begin{proof}
See Appendix \ref{appendix:prop-nonexist}.
\end{proof}

\begin{remark}
\label{rmk:h-singular}
Let the function $h$ be given by 
\[
h(\theta)=-\frac{\alpha}{\theta^{\beta}},
\]
for some $\alpha>0$ and $\beta>0$. According to Proposition \ref{prop:nonexist}, if $\beta > \min(\dm, \dm(m-1))$, then no ground states for the energy functional exist. Also, no global minimizers exist for $1<m<2$, $\beta=\dm(m-1)$ and 
$\alpha>\frac{2^{\dm(m-1)-1}}{(m-1)w(\dm)^{m-1}}$.

This result is consistent with the non-existence of minimizers in the Euclidean case $M=\bbr^\dm$, as established in \cite[Theorem 4.1]{CaDePa2019}.
\end{remark}

Due to Proposition \ref{prop:nonexist}, the study in our paper is restricted to potentials $h(\theta)$ that are less singular than $-\frac{1}{\theta^{\min(\dm(m-1),\dm)}}$ at origin. For such potentials, let us show first that the energy from \eqref{eqn:energy} is properly defined when the sectional curvatures of $M$ are bounded below. For this purpose we will use the following lemma and an HLS inequality on Cartan-Hadamard manifolds presented in Section \ref{sect:HLS}.

\begin{lemma}\label{esth}
Let the interaction potential satisfy \textbf{(H)}, for a function $h$ that also satisfies
\begin{align}\label{Conh}
\lim_{\theta\to0+}\theta^\alpha h(\theta)=0, \qquad\text{and}\qquad \lim_{\theta\to\infty}h(\theta)\geq0,
\end{align}
for some $0<\alpha<\min(\dm(m-1), \dm)$. Then, there exist non-negative constants $\gamma_1$ and $\gamma_2$ such that
\begin{align}\label{claimeq}
h(\theta)\geq -\gamma_1 \theta^{-\alpha}-\gamma_2,\qquad\text{ for all } \theta>0.
\end{align}
\end{lemma}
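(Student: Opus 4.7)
The plan is to split the positive half-line at a threshold $\delta>0$ extracted from the small-$\theta$ hypothesis, control $h$ by the power $-\theta^{-\alpha}$ on the near-origin piece, and by a constant on the far piece via monotonicity.

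Concretely, I would first invoke $\lim_{\theta\to 0+}\theta^\alpha h(\theta)=0$ to pick $\delta>0$ so small that $|\theta^\alpha h(\theta)|\leq 1$ for every $\theta\in(0,\delta]$. This yields
\[
h(\theta)\geq -\theta^{-\alpha} \quad \text{on } (0,\delta], \qquad \text{and in particular } h(\delta)\geq -\delta^{-\alpha}.
\]
Since $h$ is non-decreasing, this gives $h(\theta)\geq h(\delta)\geq -\delta^{-\alpha}$ for every $\theta\geq \delta$. A useful by-product is that $h$ does not take the value $-\infty$ anywhere on $(0,\infty)$ (which is the only real subtlety to keep in mind, since assumption \textbf{(H)} a priori allows $h(0)=-\infty$): finiteness near the origin follows from the pointwise bound above, and then monotonicity propagates it to all of $(0,\infty)$.

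To finish, I would set $\gamma_1:=1$ and $\gamma_2:=\delta^{-\alpha}$, both non-negative, and verify \eqref{claimeq} piecewise. For $\theta\in(0,\delta]$ one has $-\gamma_1\theta^{-\alpha}-\gamma_2\leq -\theta^{-\alpha}\leq h(\theta)$; for $\theta\geq\delta$ one has $-\gamma_1\theta^{-\alpha}-\gamma_2\leq -\gamma_2\leq h(\theta)$. I do not anticipate any serious obstacle here; the argument is essentially a soft combination of the integrable singularity at $0$ with the monotonicity of $h$. The second assumption in \eqref{Conh}, namely $h_\infty\geq 0$, does not seem strictly needed to prove the bound \eqref{claimeq} itself, and I expect it is imposed for its role elsewhere (e.g.\ to guarantee that the interaction energy is well defined jointly with the HLS-type inequalities of Section \ref{sect:HLS}).
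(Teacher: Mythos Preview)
Your argument is correct. In fact it is somewhat cleaner than the paper's own proof. The paper splits $(0,\infty)$ into three pieces $(0,\theta_1]$, $[\theta_1,\theta_2]$, $[\theta_2,\infty)$: it uses the small-$\theta$ limit on the first piece, the lower-semicontinuity of $h$ to extract a finite infimum $C=\inf_{[\theta_1,\theta_2]}(h(\theta)+\gamma\theta^{-\alpha})$ on the compact middle piece, and the hypothesis $h_\infty\geq 0$ to get a constant lower bound on the tail. It then adjusts $\gamma_1$ according to the sign of $C$. Your two-piece split instead leans directly on the monotonicity of $h$ (built into \textbf{(H)}): once $h(\delta)\geq -\delta^{-\alpha}$ is secured, the non-decreasing property immediately propagates this bound to all $\theta\geq\delta$, with no need for a separate compact-interval argument or for the condition $h_\infty\geq 0$. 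Your remark that the second part of \eqref{Conh} is not actually used in establishing \eqref{claimeq} is therefore justified; the paper's route happens to invoke it, but it is dispensable for this lemma.
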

\begin{proof}
Fix a positive constant $\epsilon$. If $h(\theta)\geq -\epsilon$ for all $\theta>0$, then \eqref{claimeq} holds for all $\theta>0$ with $\gamma_1=0$ and $\gamma_2=\epsilon$. 

If $h(\theta)<-\epsilon$ for some $\theta>0$, then there exists $\theta_2>0$ such that
\[
h(\theta)\geq -\epsilon, \qquad\forall \theta\geq \theta_2,
\]
as $\lim_{\theta\to\infty}h(\theta)\geq0$. From $\lim_{\theta\to0+}\theta^\alpha h(\theta)=0$, we can also choose $\gamma>0$ and $\theta_1<\theta_2$ such that
\[
h(\theta)\geq -\gamma \theta^{-\alpha},\qquad\forall 0<\theta\leq\theta_1.
\]

Since $h$ is a lower-semicontinuous function, we can find
\[
C:=\inf_{\theta_1\leq \theta\leq \theta_2}\left(h(\theta)+\gamma\theta^{-\alpha}\right).
\]
If $C\geq0$, then \eqref{claimeq} holds with $\gamma_1=\gamma$ and $\gamma_2=\epsilon$. If $C<0$, then
\[
h(\theta)+(\gamma-C\theta_2^\alpha)\theta^{-\alpha}\geq C+(-C)\theta_2^\alpha \theta^{-\alpha}\geq C+(-C)=0,\qquad\forall~\theta_1\leq \theta\leq \theta_2.
\]
For this case, \eqref{claimeq} holds for all $\theta>0$ with $\gamma_1=\gamma-C\theta_2^\alpha$ and $\gamma_2=\epsilon$. 
\end{proof}

Consider first the case $\rho \in L^m(M) \cap \calP(M)$ and show that the energy \eqref{eqn:energy} is well-defined. 
\begin{proposition}\label{prop:wd}
Let $m>1$ and assume $M$ satisfies assumption \textbf{(M)} and that its sectional curvatures are bounded below by a constant $-c_m<0$ -- see \eqref{eqn:lb-cm}. Let the interaction potential satisfy \textbf{(H)}, for a function $h$ that also satisfies \eqref{Conh}.  If $\rho\in L^m(M) \cap \calP(M)$, then $E[\rho]$ given by \eqref{eqn:energy} is well-defined.
\end{proposition}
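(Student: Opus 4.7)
The plan is to split $E[\rho]$ into its entropy and interaction pieces and show each is well-defined. Since $\rho\in L^m(M)\cap\calP(M)$, the entropy $\frac{1}{m-1}\int_M \rho^m \,\d x$ is nonnegative and finite, hence it lies in $[0,\infty)$. The only delicate point is to prove that the interaction integral
\[
\iint_{M\times M} h(\dist(x,y))\,\rho(x)\rho(y)\,\d x\,\d y
\]
is well-defined as an element of $(-\infty,\infty]$, and for this it suffices to show that its \emph{negative} part is finite, since $h$ is only bounded above at infinity (and possibly singular at the origin).

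To bound the negative part of $h$, I would invoke Lemma \ref{esth}: under hypothesis \eqref{Conh}, there exist nonnegative constants $\gamma_1,\gamma_2$ such that
\[
h(\theta)\ge -\gamma_1\,\theta^{-\alpha}-\gamma_2,\qquad \theta>0,
\]
for some $0<\alpha<\min(\dm(m-1),\dm)$. This gives $[h(\dist(x,y))]_-\le \gamma_1\dist(x,y)^{-\alpha}+\gamma_2$, and since $\iint \rho(x)\rho(y)\,\d x\,\d y=1$, the problem reduces to showing that the Riesz-type quantity
\[
I_\alpha[\rho]:=\iint_{M\times M}\dist(x,y)^{-\alpha}\,\rho(x)\rho(y)\,\d x\,\d y
\]
is finite.

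The main tool is the HLS-type inequality on Cartan--Hadamard manifolds developed in Section \ref{sect:HLS}, which under the curvature lower bound $-c_m$ and the admissible range $0<\alpha<\min(\dm(m-1),\dm)$ will yield a bound of the form $I_\alpha[\rho]\le C\,\|\rho\|_{L^1}^{a}\|\rho\|_{L^m}^{b}$ for suitable nonnegative exponents $a,b$. Since $\rho\in L^m(M)\cap\calP(M)$, both norms are finite, hence $I_\alpha[\rho]<\infty$. Combining this with the $\gamma_2$ term (whose contribution is just $\gamma_2$) gives finiteness of $\iint [h(\dist(x,y))]_-\rho(x)\rho(y)\,\d x\,\d y$, and therefore the interaction integral lies in $(-\infty,\infty]$. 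Adding the finite nonnegative entropy, we conclude $E[\rho]\in(-\infty,\infty]$, i.e., $E[\rho]$ is well-defined.

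The hard part is not this proposition itself, but rather the manifold HLS inequality it relies on (proved in Section \ref{sect:HLS}): the Euclidean Hardy--Littlewood--Sobolev estimate does not transfer directly because the distance $\dist(x,y)$ is the intrinsic Riemannian distance. The role of the lower curvature bound $-c_m$ is precisely to furnish the upper volume comparison in Theorem \ref{cor:AV-bounds}, which, combined with a Vitali covering argument on $M$, produces the manifold HLS bound in the required range of $\alpha$. Once that inequality is in hand, the present proposition follows as a direct corollary via the reduction above.
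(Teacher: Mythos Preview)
Your proposal is correct and follows essentially the same route as the paper: use Lemma~\ref{esth} to bound the negative part of $h$ by $\gamma_1\theta^{-\alpha}+\gamma_2$, then invoke the manifold HLS inequality of Theorem~\ref{thm:HLSM} to conclude $I_\alpha[\rho]<\infty$ since $\rho\in L^m(M)\cap\calP(M)$. The only minor imprecision is the form of the HLS bound you quote---in the paper it reads $I_\alpha[\rho]\le r^{-\alpha}+\tilde C(\alpha,\dm,r,c_m)\bigl(\int_M\rho^m\bigr)^{\alpha/(\dm(m-1))}$ rather than a pure product of norms---but this does not affect the argument.
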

\begin{proof}
See Appendix \ref{appendix:prop-wd}.
\end{proof}

Now consider the case $\rho\in \mathcal{P}(M)\backslash L^m(M)$. If $h$ satisfies \eqref{Conh} we can show that the energy is in fact infinite in this case, so such densities are not candidates for energy minimizers. 
\begin{proposition}\label{spresp}
Make the same assumptions as in Proposition \ref{prop:wd}. Then,  if \\ $\rho\in \mathcal{P}(M)\backslash L^m(M)$, we have $E[\rho]=\infty$.
\end{proposition}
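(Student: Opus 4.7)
The plan is to work directly from the definition \eqref{eqn:energy} of $E[\rho]$ for $\rho\notin L^m(M)$: it suffices to show that every sequence $(\rho_k)\subset L^m(M)\cap\calP(M)$ with $\rho_k\rightharpoonup\rho$ satisfies $\liminf_{k\to\infty}E[\rho_k]=+\infty$, so that the infimum in the definition of $E[\rho]$ is $+\infty$.

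The first step would be to observe that necessarily $\|\rho_k\|_{L^m(M)}\to\infty$. Otherwise, a subsequence is bounded in $L^m(M)$ and, by reflexivity of $L^m$ (recall $m>1$), admits a further subsequence with a weak $L^m$ limit $\tilde\rho\in L^m(M)$. Testing against any $f\in C_c(M)\subset L^{m'}(M)\cap C_0(M)$ and comparing the two modes of convergence identifies $\tilde\rho$ with the weak-$^*$ limit $\rho$, contradicting $\rho\notin L^m(M)$.

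The second step would be to extract a lower bound on the interaction term that grows more slowly in $\|\rho_k\|_{L^m}$ than the entropy term. By Lemma \ref{esth}, there exist $\alpha\in(0,\min(\dm(m-1),\dm))$ and constants $\gamma_1,\gamma_2\geq 0$ with $h(\theta)\geq -\gamma_1\theta^{-\alpha}-\gamma_2$ for all $\theta>0$, so the interaction energy is at least
\[
-\frac{\gamma_1}{2}\iint_{M\times M}\frac{\rho_k(x)\rho_k(y)}{\dist(x,y)^\alpha}\,\dd x\,\dd y \;-\; \frac{\gamma_2}{2}.
\]
I would then invoke the HLS-type inequality of Section \ref{sect:HLS} (valid since the sectional curvatures of $M$ are bounded below) to control this double integral by $C\|\rho_k\|_{L^r(M)}^2$ with $r=2\dm/(2\dm-\alpha)$. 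Since $\alpha<\dm(m-1)$, one checks $1<r<m$; interpolating between $L^1$ and $L^m$ and using $\|\rho_k\|_{L^1}=1$ then gives $\|\rho_k\|_{L^r}\leq \|\rho_k\|_{L^m}^\theta$ with $2\theta=\alpha m/(\dm(m-1))<m$.

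Putting the two steps together,
\[
E[\rho_k]\;\geq\;\frac{1}{m-1}\|\rho_k\|_{L^m}^m \;-\; C_1\|\rho_k\|_{L^m}^{2\theta}\;-\;C_2,
\]
and since $\|\rho_k\|_{L^m}\to\infty$ while the leading exponent $m$ strictly dominates $2\theta$, the right-hand side diverges to $+\infty$, forcing $\liminf_k E[\rho_k]=+\infty$. I expect the main obstacle to be the identification argument in the first step, where the subtle interplay between weak $L^m$ convergence and weak-$^*$ convergence of Radon measures against $C_0(M)$ must be made precise; the subsequent interpolation/HLS bookkeeping is essentially the same computation that already underlies the proof of Proposition \ref{prop:wd}.
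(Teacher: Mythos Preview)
Your approach is essentially the same as the paper's: bound $h$ below via Lemma~\ref{esth}, control the resulting Riesz-type double integral by an HLS inequality from Section~\ref{sect:HLS}, and observe that the entropy term dominates because its exponent is strictly larger. One point where you are actually more careful than the paper: in Appendix~\ref{appendix:spresp} the paper simply asserts that $\int_M\rho_k^m\,\dd x\to\infty$ for any sequence $\rho_k\rightharpoonup\rho\notin L^m(M)$, whereas you supply the weak-$L^m$-compactness argument (bounded subsequence $\Rightarrow$ weak $L^m$ limit $\Rightarrow$ identification with $\rho$ via testing against $C_c(M)$). That identification step is exactly the right way to close the gap, and it handles both the case where $\rho$ has a singular part and the case where $\rho$ is absolutely continuous but with density outside $L^m$.

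One small correction on the second step: the HLS inequality actually proved in Section~\ref{sect:HLS} for Cartan--Hadamard manifolds (Theorem~\ref{thm:HLSM}) does not yield a bound of the classical form $C\|\rho_k\|_{L^r(M)}^2$ with $r=2\dm/(2\dm-\alpha)$; rather, it carries an additive constant and is already stated directly as a bound by $\bigl(\int_M\rho_k^m\,\dd x\bigr)^{\alpha/(\dm(m-1))}$. So your separate interpolation step is unnecessary (it is effectively built into Theorem~\ref{thm:HLSM}), and invoking that theorem as stated gives exactly your final inequality
\[
E[\rho_k]\;\geq\;\frac{1}{m-1}\|\rho_k\|_{L^m}^m \;-\; C_1\|\rho_k\|_{L^m}^{\alpha m/(\dm(m-1))}\;-\;C_2,
\]
with the same dominating exponent you compute. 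The rest of the argument goes through unchanged.
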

\begin{proof}
See Appendix \ref{appendix:spresp}.
\end{proof}

We conclude the considerations above with the following remark.
\begin{remark}
\label{rmk:min-space}
Assume the manifold and the interaction potential satisfy the assumptions in Propositions \ref{prop:wd} or \ref{spresp}. Then,
\[
\inf_{\rho\in\mathcal{P}(M)}E[\rho]=\inf_{\rho\in L^m(M)\cap\mathcal{P}(M)}E[\rho].
\]
\end{remark}

\subsection{Riesz rearrangement inequality on $\bbh^\dm$}
\label{subsec:rearrange}
We first present a rearrangement inequality on the hyperbolic space $\mathbb{H}^\dm$ \cite{BurchardSchmuckenschlager2001}. Rearrangement inequalities in $\bbr^\dm$ are discussed in detail in the classical monograph by Lieb and Loss \cite{LiebLoss2001}.

Fix a pole $\p$ on the hyperbolic space and take $A\subset \bbh^\dm$ a generic Borel set of finite Lebesgue measure. We define $A^*$, the symmetric rearrangement (about the pole $\p$) of the set $A$, as the open geodesic ball centred at $\p$ with the same volume as $A$, i.e.,
\[
A^*= B_r(\p), \quad\text{with}\quad |B_r(\p)|=|A|.
\]
The symmetric-decreasing rearrangement of the characteristic function $\chi_A$ of the set $A$ is defined as
\[
\chi_A^*=\chi_{A^*}.
\]
For a Borel measurable function $f: \bbh^\dm \to \bbr$, its symmetric-decreasing rearrangement (about the pole $\p$) is the function $f^*:\bbh^\dm \to \bbr$ defined by
\[
f^*(x)=\int_0^\infty \chi^*_{|f|>t}(x)\d t.
\]

The following theorem in \cite{BurchardSchmuckenschlager2001} generalizes the Riesz rearrangement inequality to $\bbh^\dm$ (the results is more general in fact, as it also applies to the sphere and it can work with a general number of functions).
\begin{theorem}[Riesz rearrangement inequality on $\bbh^\dm$ \cite{BurchardSchmuckenschlager2001}]\label{RIH}
Let $f_1$ and $f_2$ be two nonnegative measurable functions on $\bbh^\dm$, which vanish at infinity,  and $g:[0,\infty) \to \bbr$ be a non-increasing function. Then, we have
\[
\iint_{\bbh^\dm\times \bbh^\dm}f_1(x)g(\dist(x, y))f_2(y)\d x\d y\leq\iint_{\bbh^\dm\times \bbh^\dm}f_1^*(x)g(\dist(x, y))f_2^*(y)\d x\d y,
\]
where $f_1^*$ and $f_2^*$ are the symmetric-decreasing rearrangements of $f_1$ and $f_2$ about a fixed pole $\p \in \bbh^\dm$.
\end{theorem}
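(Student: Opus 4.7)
The plan is to follow the two-point symmetrization (polarization) strategy of Brock--Solynin, adapted to the hyperbolic setting, which is the approach taken by Burchard and Schmuckenschl\"ager. The key idea is that although constructing the symmetric rearrangement $f^*$ directly on a curved space is delicate, one can approximate $f^*$ by iterated polarizations with respect to totally geodesic hyperplanes, each of which is an elementary geometric operation whose effect on the integral is easy to control.

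First, I would apply the layer-cake representation $f_i(x) = \int_0^\infty \chi_{\{f_i > t\}}(x)\,\dd t$ for $i=1,2$ and invoke Fubini's theorem to reduce the inequality to the case of characteristic functions; since $(\chi_A)^* = \chi_{A^*}$, it suffices to prove
\[
\iint_{\bbh^\dm \times \bbh^\dm} \chi_A(x)\, g(\dist(x,y))\, \chi_B(y)\,\dd x\,\dd y \leq \iint_{\bbh^\dm \times \bbh^\dm} \chi_{A^*}(x)\, g(\dist(x,y))\, \chi_{B^*}(y)\,\dd x\,\dd y
\]
for any Borel sets $A,B \subset \bbh^\dm$ of finite measure. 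Next, fix a totally geodesic hyperplane $H \subset \bbh^\dm$ through the pole $\p$, with reflection isometry $\sigma_H$, and let $H^+$ be the half-space containing $\p$. Define the polarization $A^H$ to be the set obtained from $A$ by transferring mass from $A \cap H^-$ to its reflection in $H^+$ whenever the target site is not already occupied. The crucial geometric input, which uses only that reflections are isometries and that $H$ is totally geodesic, is that for $x,y \in H^+$ one has $\dist(x,y) \leq \dist(\sigma_H x, y) = \dist(x, \sigma_H y)$, while $\dist(\sigma_H x, \sigma_H y) = \dist(x, y)$. Combining these with the monotonicity of $g$ and a four-case analysis over the quadrants $A \cap H^\pm$, $B \cap H^\pm$ yields the polarization inequality $I(A,B) \leq I(A^H, B^H)$, where $I$ denotes the bilinear form on the left-hand side above.

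Finally, to pass from polarization to full symmetric-decreasing rearrangement, I would invoke the density result (the Brock--Solynin approximation scheme, extended to $\bbh^\dm$ by Burchard--Schmuckenschl\"ager) that for any set $A$ of finite measure there is a sequence of totally geodesic hyperplanes $(H_k)$ through $\p$ such that the iterated polarizations $A^{H_1 H_2 \cdots H_k}$ converge to $A^*$ in the symmetric-difference metric. Applying this simultaneously to $A$ and $B$ and passing to the limit via dominated convergence (using that all iterates are contained in a fixed geodesic ball whose radius is controlled by $|A|+|B|$) completes the proof. The main obstacle is precisely this convergence step: on $\bbh^\dm$ one must exploit the transitivity of the isometry group on pointed hyperplanes to choose a sufficiently rich countable family $(H_k)$, and one must handle the exponential volume growth of hyperbolic balls carefully so that the compactness argument underlying the Brock--Solynin scheme still delivers convergence in measure to $A^*$.
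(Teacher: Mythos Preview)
The paper does not give its own proof of this theorem: it is quoted as a result from \cite{BurchardSchmuckenschlager2001} and used as a black box (see Section~\ref{subsec:rearrange}). Your sketch is a faithful outline of the Burchard--Schmuckenschl\"ager argument itself, so there is nothing in the paper to compare it against beyond the citation.

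As a proof sketch it is essentially sound; the one place I would push back is the parenthetical justification of the limit step. You write that dominated convergence applies ``using that all iterates are contained in a fixed geodesic ball whose radius is controlled by $|A|+|B|$'', but polarization with respect to a hyperplane through $\p$ does not in general force the set into any fixed ball: it only moves mass to the $\p$-side of $H$, and iterates of a set of finite measure can still have unbounded support at every finite stage. The actual convergence argument in the Brock--Solynin/Burchard--Schmuckenschl\"ager scheme proceeds via convergence in measure (symmetric-difference metric) together with an a priori $L^p$ bound coming from the fact that polarization is equimeasurable, not via a uniform spatial confinement. You should also be slightly careful with the Fubini step in the layer-cake reduction when $g$ is allowed to take negative values; the clean way is to decompose $g$ itself as a superposition of indicators $\chi_{[0,r)}$ (up to an additive constant), which keeps everything nonnegative and lets Tonelli do the work.
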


Our interest is to apply Theorem \ref{RIH} for the interaction component of the energy in \eqref{eqn:energy}. Indeed, for an interaction potential that satisfies assumption \textbf{(H)} (note that $h$ is non-decreasing, so $-h$ is non-increasing) and a density $\rho \in\mathcal{P}_{ac}(\bbh^\dm)$, we have
\[
\frac{1}{2}\iint_{\bbh^\dm \times \bbh^\dm} h(\dist(x, y))\rho(x)\rho(y)\d x \d y \geq \frac{1}{2}\iint_{\bbh^\dm \times \bbh^\dm} h(\dist(x, y))\rho^*(x)\rho^*(y)\d x \d y,
\]
where $\rho^*$ is the symmetric-decreasing rearrangement of $\rho$.  

On the other hand, decreasing rearrangements preserve the $L^p$ norm, for any $p \geq 1$ \cite{LiebLoss2001}, which implies
\[
\int_{\bbh^\dm} \rho(x)^m\d x = \int_{\bbh^\dm} \rho^*(x)^m\d x.
\]
Hence, by symmetric-decreasing rearrangements of the density, the entropy component of the energy remains the same, while the interaction component decreases. Combining the two observations, we then find that for an interaction potential that satisfies \textbf{(H)}, the energy \eqref{eqn:energy} decreases by symmetric-decreasing rearrangements of the density, i.e.,
\[
E[\rho] \geq E[\rho^*], \qquad \text{ for any } \rho \in\mathcal{P}_{ac}(\bbh^\dm).
\]
  
From these considerations, to find global minimizers of $E[\rho]$ on $\mathcal{P}(\bbh^\dm)$, we can only look for densities that are  radially symmetric with respect to a fixed point $\p$, and decreasing along the radial direction. We will use the following notation
\begin{equation}
\label{eqn:Posd}
\mathcal{P}_\p^{sd}(\bbh^\dm)=\{\rho\in\mathcal{P}_{ac}(\bbh^\dm):\text{$\rho$ is radially symmetric and decreasing w.r.t pole $\p$}\}.
\end{equation}


\section{Hardy-Littlewood-Sobolev (HLS) inequalities on Cartan-Hadamard manifolds}
\label{sect:HLS}
\setcounter{equation}{0}

In this section we derive several HLS-type inequalities on general Cartan-Hadamard manifolds. In particular, the inequalities hold for the hyperbolic space $\bbh^\dm$, and they will be a key tool used in subsequent sections to show existence of global minimizers of the energy   \eqref{eqn:energy}. 

First, we list the following variation of HLS-type inequality on the Euclidean space $\bbr^\dm$ (see \cite[Theorem 2.6]{CaDePa2019} and \cite[Theorem 3.1]{CalvezCarrilloHoffmann2017}): 
\begin{theorem}[Variation of HLS inequality on $\bbr^\dm$ \cite{CaDePa2019}]\label{thm:HLSslow}
Let $0<\lambda<\dm$ and denote $m_c:=1+\frac{\lambda}{\dm}$. Then, for any $\rho\in L^{m}(\bbr^\dm) \cap \calP(\bbr^\dm)$, with $m \geq m_c$, we have
\begin{equation}
\label{B-1}
\iint_{\bbr^\dm \times \bbr^\dm} \frac{\d\rho(x)\d\rho(y)}{|x-y|^\lambda}\leq C(\lambda, \dm)\|\rho\|_{L^{m}}^{(1-\theta)m_c},
\end{equation}
for a positive constant $C(\lambda, \dm)$ that depends on $\lambda$ and $\dm$, and $\theta \in [0,1)$ given by
\[
\theta = \frac{m- m_c}{m_c(m-1)}.
\]
\end{theorem}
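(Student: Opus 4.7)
The plan is to combine the classical Hardy--Littlewood--Sobolev inequality on $\bbr^\dm$ with a standard $L^p$-interpolation bound, exploiting the fact that $\rho$ is a probability measure so its $L^1$ norm equals one. Specifically, the classical HLS inequality for equal exponents gives
\[
\iint_{\bbr^\dm\times\bbr^\dm}\frac{\rho(x)\rho(y)}{|x-y|^\lambda}\,\d x\,\d y\leq C(\lambda,\dm)\,\|\rho\|_{L^p}^2,
\]
where $p$ is determined by the scaling relation $\tfrac{2}{p}+\tfrac{\lambda}{\dm}=2$, i.e.\ $p=\tfrac{2\dm}{2\dm-\lambda}$. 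Since $0<\lambda<\dm$, we have $1<p<\infty$, and this value of $p$ is exactly the one for which a conformally invariant HLS estimate with a single $L^p$ norm is available.

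Next I would interpolate. A quick computation shows that $p\leq m_c$, and since $m\geq m_c$ the value $p$ lies in the interpolation interval $[1,m]$. Log-convexity of the $L^q$ norms then yields
\[
\|\rho\|_{L^p}\leq\|\rho\|_{L^1}^{1-\theta'}\,\|\rho\|_{L^m}^{\theta'},\qquad \text{where }\frac{1}{p}=(1-\theta')+\frac{\theta'}{m},
\]
and solving gives $\theta'=\frac{m\lambda}{2\dm(m-1)}$. Because $\rho\in\calP(\bbr^\dm)$, we have $\|\rho\|_{L^1}=1$, so plugging the interpolation bound back into the HLS estimate delivers
\[
\iint_{\bbr^\dm\times\bbr^\dm}\frac{\rho(x)\rho(y)}{|x-y|^\lambda}\,\d x\,\d y\leq C(\lambda,\dm)\,\|\rho\|_{L^m}^{2\theta'}.
\]

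The only remaining task is to identify the exponent $2\theta'$ with $(1-\theta)m_c$ from the statement. Using $m_c-1=\lambda/\dm$, a direct computation gives
\[
(1-\theta)m_c=\frac{m_c(m-1)-(m-m_c)}{m-1}=\frac{m(m_c-1)}{m-1}=\frac{m\lambda}{\dm(m-1)}=2\theta',
\]
completing the argument. The verification that $\theta\in[0,1)$ when $m\geq m_c$ (with $\theta=0$ precisely at the critical exponent $m=m_c$) also follows from this calculation.

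I do not anticipate any genuine obstacle here: the classical HLS inequality is a black box, the interpolation is textbook, and the exponent matching is a one-line algebraic check. The only point that deserves care is ensuring $p\in[1,m]$ throughout the regime $m\geq m_c$, which amounts to checking $p\leq m_c$, equivalently $2\dm^2\leq(2\dm-\lambda)(\dm+\lambda)$, i.e.\ $\lambda(\dm-\lambda)\geq 0$—valid precisely because $0<\lambda<\dm$.
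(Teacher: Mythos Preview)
Your argument is correct. The paper does not actually prove Theorem~\ref{thm:HLSslow}; it quotes it from \cite[Theorem~2.6]{CaDePa2019} and simply remarks that $\theta$ is the interpolation parameter satisfying $\tfrac{1}{m_c}=\theta+\tfrac{1-\theta}{m}$, i.e.\ the cited proof passes through the critical case $m=m_c$ (which gives control by $\|\rho\|_{L^{m_c}}^{m_c}$, cf.\ \eqref{eqn:B-3}) and then interpolates $L^{m_c}$ between $L^1$ and $L^m$.

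Your route is a mild variant: instead of using the critical case as a black box, you invoke the classical HLS directly at the diagonal exponent $p=\tfrac{2\dm}{2\dm-\lambda}$ and interpolate $L^p$ between $L^1$ and $L^m$. The check $p\le m_c$ you supply ensures $p\in[1,m]$, and your exponent bookkeeping $2\theta'=(1-\theta)m_c=\tfrac{m\lambda}{\dm(m-1)}$ is correct. The two approaches are essentially equivalent---both are ``HLS plus $L^1$--$L^m$ interpolation using $\|\rho\|_{L^1}=1$''---the only difference being whether one interpolates from $L^{m_c}$ or from $L^p$; your version has the modest advantage of appealing only to the textbook HLS inequality rather than to the derived critical-case statement.
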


Theorem \ref{thm:HLSslow} is the statement of \cite[Theorem 2.6]{CaDePa2019}. The case $m=m_c$ ($\theta=0$) was derived in \cite[Theorem 3.1]{CalvezCarrilloHoffmann2017}. In fact, $\theta$ is an interpolation parameter, that satisfies
\[
\frac{1}{m_c} = \theta + \frac{1-\theta}{m}.
\]
By using the expression for $\theta$, inequality \eqref{B-1} can also be written as
\begin{equation}
\label{eqn:B-2}
\iint_{\bbr^\dm \times \bbr^\dm} \frac{\d\rho(x)\d\rho(y)}{|x-y|^\lambda}\leq C(\lambda, \dm) \left( \int_{\bbr^\dm}\rho(x)^m \d x \right)^{\frac{\lambda}{\dm(m-1)}},
\end{equation}
which for $m=m_c$ it simply reduces to
\begin{equation}
\label{eqn:B-3}
\iint_{\bbr^\dm \times \bbr^\dm} \frac{\d\rho(x)\d\rho(y)}{|x-y|^\lambda}\leq C(\lambda, \dm)  \int_{\bbr^\dm}\rho(x)^{m_c} \d x.
\end{equation}
We will use inequality \eqref{eqn:B-3} to derive an HLS-type inequality on Cartan-Hadamard manifolds.

Let $M$ be an $\dm$-dimensional Cartan-Hadamard manifold. Take a density $\rho\in \mathcal{P}_{ac}(M)$ and $\lambda \in (0,\dm)$. Fix a pole $\p\in M$ and denote by $f:M\to T_\p M$ the Riemannian logarithm map at $\p$, i.e.,
\begin{equation}
\label{eqn:R-log}
f(x)=\log_\p x,\qquad \text{ for all }x\in M.
\end{equation}

Since $M$ has non-positive curvature, consider the Rauch Comparison Theorem (see Theorem \ref{RCT}) in the following setup.  Fix any two points $x,y \in M$, take $\tilde{M} = T_\p M$, $i$ to be the identity map, and $c$ to be the geodesic curve joining $x$ and $y$. The assumption on the curvatures $\secc$ and $\tilde{\secc}$ holds, as $\secc \leq 0 = \tilde{\secc}$. The curve $\tilde{c}$ constructed in Theorem \ref{RCT} joins $\log_\p x$ and $\log_\p y$, and hence, its length is greater than or equal to the length of the straight segment joining $\log_\p x$ and $\log_\p y$. Hence, by Theorem \ref{RCT}, we infer:
\[
|\log_\p x - \log_\p y| \leq l(\tilde{c}) \leq l(c) = \dist(x,y).
\]
Using the notation \eqref{eqn:R-log} we write
\begin{equation}
\label{eqn:dist-ineq}
|f(x)-f(y)| \leq \dist(x, y), \qquad \text{ for all } x,y \in M.
\end{equation}
Note that inequality \eqref{eqn:dist-ineq} holds with equal sign for $M=\bbr^\dm$. 

The following lemma is a generalization of the HLS inequality \eqref{eqn:B-3} to Cartan-Hadamard manifolds with curvature bounded from below.
\begin{proposition}[HLS-type inequality on Cartan-Hadamard manifolds - variant I] 
\label{prop:HLS-I}
Let $M$ be an $\dm$-dimensional Cartan--Hadamard manifold with sectional curvatures that satisfy, for all $x\in M$ and all sections $\sigma \subset T_x M$, 
\begin{equation}
\label{eqn:lb-cm}
-c_m\leq \calK(x;\sigma)\leq 0,
\end{equation}
where $c_m$ is a positive constant. Let $\p$ be a fixed (but arbitrary) pole in $M$, let $0<\lambda<\dm$ and denote $m_c:=1+\frac{\lambda}{\dm}$. Then, for any compactly supported density $\rho \in L^{m_c}(M) \cap \mathcal{P}(M)$, we have
\begin{align}\label{D-1}
\iint_{M\times M}\frac{\d \rho(x) \d \rho(y)}{\dist(x, y)^\lambda}\leq C(\lambda, \dm)\int_M\left(\frac{\sinh(\sqrt{c_m} \theta_x)}{\sqrt{c_m} \theta_x}\right)^{(\dm-1)(m_c-1)}\rho(x)^{m_c} \d x.
\end{align}
for a positive constant $C(\lambda, \dm)$ that depends on $\lambda$ and $\dm$. 
\end{proposition}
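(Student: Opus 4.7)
The strategy is to transport the problem to the tangent space $T_\p M \cong \bbr^\dm$ via the Riemannian logarithm map $f = \log_\p$, apply the Euclidean HLS inequality \eqref{eqn:B-3} there, and then pull back to $M$ using the Jacobian bound from Theorem \ref{lemma:Chavel-thms}. The Rauch inequality \eqref{eqn:dist-ineq} is exactly what makes the first step possible.

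Concretely, I would first use \eqref{eqn:dist-ineq} to write
\[
\iint_{M\times M}\frac{\rho(x)\rho(y)}{\dist(x,y)^\lambda}\,\d x\,\d y \;\leq\; \iint_{M\times M}\frac{\rho(x)\rho(y)}{|f(x)-f(y)|^\lambda}\,\d x\,\d y.
\]
Next I would define the push-forward density $\tilde\rho$ on $T_\p M$ by $\tilde\rho(u) := \rho(\exp_\p u)\,|J(\exp_\p)(u)|$, so that for any measurable $g$ one has $\int_M g(f(x))\,\rho(x)\,\d x = \int_{T_\p M} g(u)\,\tilde\rho(u)\,\d u$. This makes $\tilde\rho$ a probability density on $\bbr^\dm$. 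Compact support of $\rho$ plus the (locally bounded) Jacobian ensure $\tilde\rho \in L^\infty$, hence $\tilde\rho \in L^{m_c}(\bbr^\dm)$, so the Euclidean HLS inequality \eqref{eqn:B-3} applies:
\[
\iint_{T_\p M \times T_\p M}\frac{\tilde\rho(u)\tilde\rho(v)}{|u-v|^\lambda}\,\d u\,\d v \;\leq\; C(\lambda,\dm)\int_{\bbr^\dm}\tilde\rho(u)^{m_c}\,\d u.
\]

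Then I would change variables $x=\exp_\p u$ (so $\d x = |J(\exp_\p)(u)|\,\d u$) on the right-hand side to get
\[
\int_{\bbr^\dm}\tilde\rho(u)^{m_c}\,\d u = \int_M \rho(x)^{m_c}\,|J(\exp_\p)(f(x))|^{m_c-1}\,\d x,
\]
and finally invoke the upper bound of Theorem \ref{lemma:Chavel-thms}, namely $|J(\exp_\p)(u)| \leq \left(\tfrac{\sinh(\sqrt{c_m}\|u\|)}{\sqrt{c_m}\|u\|}\right)^{\dm-1}$ with $\|f(x)\|=\theta_x$, together with $m_c-1 = \lambda/\dm > 0$, to obtain \eqref{D-1}.

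The logical pieces are all in place (Rauch already proved above as \eqref{eqn:dist-ineq}, Euclidean HLS cited as Theorem \ref{thm:HLSslow}, Jacobian bound from Theorem \ref{lemma:Chavel-thms}), so no step is particularly hard in isolation. The main delicate point, and the thing I would check carefully, is the integrability step justifying that $\tilde\rho\in L^{m_c}(\bbr^\dm)$ and that the change of variables is legitimate; compact support of $\rho$ is precisely what is used here, and it is the reason the proposition restricts to compactly supported densities (a density argument, presumably carried out elsewhere in the paper, would then be needed to remove this restriction). The direction of the Rauch inequality is also worth emphasizing: $\dist(x,y) \geq |f(x)-f(y)|$ gives $\dist(x,y)^{-\lambda}\leq |f(x)-f(y)|^{-\lambda}$, which only yields a useful bound because the interaction is attractive/singular; this is the structural reason why nonpositive curvature is helpful rather than obstructive.
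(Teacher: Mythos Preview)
Your proof is correct and follows essentially the same approach as the paper: Rauch comparison \eqref{eqn:dist-ineq} to pass to $T_\p M$, push-forward by $f=\log_\p$, Euclidean HLS \eqref{eqn:B-3}, change of variables back, and the Jacobian upper bound from Theorem \ref{lemma:Chavel-thms}. The paper likewise notes that compact support of $\rho$ is what guarantees $f_\#\rho\in L^{m_c}(T_\p M)$, exactly as you flagged.
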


\begin{proof}
By \eqref{eqn:dist-ineq} and a change of variable, we find
\begin{align}
\iint_{M\times M}\frac{\d\rho(x) \d\rho(y)}{\dist(x, y)^\lambda} & \leq \iint_{M\times M}\frac{\d\rho(x) \d\rho(y)}{|f(x)-f(y)|^\lambda} 
\nonumber \\ 
& =\iint_{T_\p M\times T_\p M}\frac{\d f_\#\rho(u)\, \d f_\#\rho(v)}{|u-v|^\lambda}, \label{eqn:rel1}
\end{align}
where $f_\#\rho\in \mathcal{P}_{ac}(T_\p M)$ denotes the pushforward (as measures) of $\rho$ by $f$. Note that
\begin{equation}
\label{eqn:f-pf}
f_\#\rho(u)=\rho(f^{-1}(u))|J(f^{-1})(u)|, \qquad \text{ for all } u\in T_\p M,
\end{equation}
where  $J(f^{-1})$ denotes the Jacobian of the map $f^{-1}$. The inverse map $f^{-1}:T_\p M\to M$ is the Riemannian exponential map
\begin{equation*}
f^{-1}(u)=\exp_\p u, \qquad \text{ for all } u\in T_\p M,
\end{equation*}
whose Jacobian was discussed in Section \ref{subsec:diff-geom}.

Since $T_\p M$ can be identified with $\bbr^\dm $, we can apply \eqref{eqn:B-3} to get 
\begin{align}
\iint_{T_\p M\times T_\p M}\frac{\d f_\#\rho(u)\, \d f_\#\rho(v)}{|u-v|^\lambda}&\leq C(\lambda, \dm) \int_{T_\p M}(f_\#\rho(u))^{m_c}\d u
\nonumber \\
&=C(\lambda, \dm)\int_M |J(f^{-1})(f(x))|^{m_c-1}\rho(x)^{m_c} \d x, \label{eqn:rel2}
\end{align}
where for the second line we used \eqref{eqn:f-pf} and the change of variable $u = f(x)$. We also note here since $\rho$ is compactly supported and in $L^{m_c}(M)$, $f_\#\rho$ is in $L^{m_c}(T_\p M)$. This follows from the equality in \eqref{eqn:rel2}, given that $|J(f^{-1})|$ is bounded on compact sets.

In \eqref{eqn:rel2} we use the upper bound on the Jacobian of the exponential map in Theorem \ref{lemma:Chavel-thms} (note that $\|\log_\p x\| = \theta_x$) to get
\begin{equation}
\label{eqn:rel3}
\iint_{T_\p M\times T_\p M}\frac{\d f_\#\rho(u)\, \d f_\#\rho(v)}{|u-v|^\lambda}\leq C(\lambda, \dm)\int_M\left(\frac{\sinh(\sqrt{c_m} \theta_x)}{\sqrt{c_m} \theta_x}\right)^{(\dm-1)(m_c-1)} \rho(x)^{m_c} \d x.
\end{equation}
Finally, combine \eqref{eqn:rel1} and \eqref{eqn:rel3} to obtain \eqref{D-1}.
\end{proof}

\begin{remark}
\label{rmk:arbitrary-pole}
Note that \eqref{D-1} holds for an arbitrary pole $\p$, with $\theta_x$ denoting the distance from $x$ to the pole. Inequality \eqref{D-1} can be regarded as a generalization to Cartan-Hadamard manifolds of \eqref{eqn:B-3}, with an extra factor in the integrand, that corresponds to the bound on the Jacobian of the exponential map on $M$ at $\p$.
\end{remark}

In the sequel we will use the following classical result from analysis \cite{EvansGariepy-book}. 
\begin{lemma}[Vitali Covering Lemma] \label{Vclem}
Let $\mathcal{S}=\{B_{r_\alpha}(x_\alpha)\}_{\alpha\in \Lambda}$ be a family of balls in a separable metric space $X$ such that
\[
\sup\{r_\alpha:\alpha\in \Lambda\}<\infty.
\]
Then there exists a countable subset $\Lambda'$ of $\Lambda$ such that $\{B_{r_\alpha}(x_\alpha)\}_{\alpha\in \Lambda'}$ are pairwise disjoint and satisfy
\[
\bigcup_{\alpha\in\Lambda}B_{r_\alpha}(x_\alpha)\subset \bigcup_{\alpha\in\Lambda'}B_{5r_\alpha}(x_\alpha).
\]
Moreover, for any $\alpha\in\Lambda$, there exists $\beta\in \Lambda'$ such that
\[
B_{r_\alpha}(x_\alpha)\subset B_{5r_\beta}(x_\beta).
\]
\end{lemma}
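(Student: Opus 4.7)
The plan is to prove the Vitali Covering Lemma by a dyadic stratification of the balls according to their radii, together with a greedy selection of a maximal disjoint subfamily in each stratum. This is the standard approach: grouping balls of comparable radius lets the triangle inequality yield the factor $5$, while separability of $X$ guarantees countability of the selected subfamily without any appeal to the axiom of choice.

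First I would set $R := \sup_{\alpha \in \Lambda} r_\alpha < \infty$ and, for each integer $j \geq 0$, introduce the class
\[
\mathcal{F}_j := \{B_{r_\alpha}(x_\alpha) : R/2^{j+1} < r_\alpha \leq R/2^j\}, \qquad \text{so that } \{\mathcal{F}_j\}_{j \geq 0} \text{ partitions } \mathcal{S}.
\]
I would then inductively construct pairwise disjoint subfamilies $\mathcal{G}_j \subset \mathcal{F}_j$: take $\mathcal{G}_0$ to be a maximal disjoint subcollection of $\mathcal{F}_0$, and, having built $\mathcal{G}_0, \dots, \mathcal{G}_{j-1}$, take $\mathcal{G}_j$ to be a maximal subcollection of balls in $\mathcal{F}_j$ that are disjoint from every ball already selected in $\bigcup_{k<j} \mathcal{G}_k$. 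Maximality here is with respect to inclusion; because $X$ is separable, a straightforward greedy enumeration (each disjoint ball contains a point of a fixed countable dense subset) produces $\mathcal{G}_j$ as a countable family, and the union $\mathcal{G} := \bigcup_j \mathcal{G}_j$ then gives a countable pairwise disjoint subfamily indexed by some $\Lambda' \subset \Lambda$.

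Next I would verify the enlargement property, which is the heart of the argument. Fix any $B_{r_\alpha}(x_\alpha) \in \mathcal{F}_j$. By the maximality built into the construction, this ball must intersect some selected ball $B_{r_\beta}(x_\beta) \in \mathcal{G}_k$ with $k \leq j$ (otherwise it could have been added to $\mathcal{G}_j$). From $r_\beta > R/2^{k+1} \geq R/2^{j+1}$ and $r_\alpha \leq R/2^j$, we obtain $r_\alpha \leq 2 r_\beta$. Picking any $z \in B_{r_\alpha}(x_\alpha) \cap B_{r_\beta}(x_\beta)$ and applying the triangle inequality to an arbitrary $x \in B_{r_\alpha}(x_\alpha)$ gives
\[
\dist(x, x_\beta) \leq \dist(x, x_\alpha) + \dist(x_\alpha, z) + \dist(z, x_\beta) < 2 r_\alpha + r_\beta \leq 5 r_\beta,
\]
so $B_{r_\alpha}(x_\alpha) \subset B_{5 r_\beta}(x_\beta)$, which yields simultaneously both the global covering statement and the local statement at the end of the lemma.

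The main obstacle is a subtle one: the existence of the maximal disjoint subfamilies $\mathcal{G}_j$. In full generality this would require Zorn's lemma, but here the separability hypothesis on $X$ lets one sidestep it via a direct transfinite-free greedy procedure using a countable dense subset $D \subset X$. The key technical point is that the upper bound $R < \infty$ on the radii is essential for the dyadic partition to start at a well-defined top stratum $\mathcal{F}_0$; without it, there would be no ball to which one could anchor the comparison $r_\alpha \leq 2 r_\beta$. Once those two points are handled, the rest is a clean application of the triangle inequality with the sharp constant $5 = 1 + 2 \cdot 2$ emerging naturally from the dyadic gap.
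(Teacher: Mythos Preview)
The paper does not prove this lemma at all: it is stated as a classical result with a citation to Evans--Gariepy, and then immediately applied. So there is no ``paper's own proof'' to compare against.

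Your argument is the standard one and is correct. Two small remarks. First, your description of $\mathcal{G}_j$ (``a maximal subcollection of balls in $\mathcal{F}_j$ that are disjoint from every ball already selected in $\bigcup_{k<j}\mathcal{G}_k$'') should explicitly say \emph{pairwise disjoint} as well; otherwise the union $\mathcal{G}$ need not be disjoint. You clearly intend this, since you invoke it later, but the wording is ambiguous. Second, the claim that separability lets you avoid Zorn's lemma is right, but the parenthetical ``each disjoint ball contains a point of a fixed countable dense subset'' only shows that any disjoint family is countable, not that a maximal one can be \emph{constructed} without choice. The actual greedy procedure you have in mind---enumerate a countable dense set $\{d_i\}$ and at step $i$ select, if possible, a candidate ball through $d_i$ disjoint from those already chosen---does work and uses only countable choice; it would be worth spelling this out in one sentence rather than leaving it implicit.
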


For fixed $r>0$, we consider the following setup of Lemma \ref{Vclem}. Consider the family of balls in $M$ given by
\begin{equation}
\label{eqn:S}
\mathcal{S}=\{B_r(x): x\in M\}.
\end{equation}
From Lemma \ref{Vclem}, we know that there exists a countable subcollection 
\[
\mathcal{S}'=\{B_r(x):x\in \mathcal{C}\}
\]
for some countable subset $\mathcal{C}$ of $M$, such that
\[
M=\bigcup_{x\in M}B_r(x)\subset \bigcup_{x\in \mathcal{C}}B_{5r}(x).
\]
In particular,
\begin{equation}
\label{eqn:M-C}
M=\bigcup_{x\in \mathcal{C}}B_{5r}(x),
\end{equation}
and any two balls $B_r(x_1)$ and $B_r(x_2)$, with $x_1, x_2\in \mathcal{C}$ and $x_1\neq x_2$, are disjoint.
\begin{lemma}
\label{lem:card-Sx} 
Let $M$ be an $\dm$-dimensional Cartan-Hadamard manifold with sectional curvatures that satisfy \eqref{eqn:lb-cm}, with $c_m>0$. Let $r>0$ fixed and consider the countable subset $\mathcal{C}$ of $M$ constructed by Vitali Covering Lemma from the covering $\mathcal{S}$ given by \eqref{eqn:S} (see \eqref{eqn:M-C}). For fixed $x\in M$, denote
\[
\mathcal{S}_x:=\{y\in \mathcal{C}: x\in B_{6r}(y)\}.
\]
Then,
\begin{equation}
\label{eqn:card-Sx2}
\#(\mathcal{S}_x)\leq N(\dm,c_m, r),
\end{equation}
where $\#(\mathcal{S}_x)$ denotes the cardinal number of $\mathcal{S}_x$, and
\begin{equation}
\label{eqn:N}
N(\dm,c_m, r):= \frac{\dm}{r^{\dm}}\int_0^{7r}\left(\frac{\sinh(\sqrt{c_m}s)}{\sqrt{c_m}}\right)^{\dm-1}\d s.
\end{equation}
\end{lemma}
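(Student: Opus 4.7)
The plan is to exploit the disjointness of the balls $\{B_r(y)\}_{y\in\mathcal{C}}$ guaranteed by the Vitali construction, combined with the two-sided curvature bounds in \eqref{eqn:lb-cm} to convert a volume inequality into a counting inequality.

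First, I would observe that $y\in\mathcal{S}_x$ means precisely $\dist(x,y)<6r$. By the triangle inequality, for any such $y$ and any $z\in B_r(y)$,
\[
\dist(x,z)\leq \dist(x,y)+\dist(y,z)<6r+r=7r,
\]
so $B_r(y)\subset B_{7r}(x)$. Since the balls $\{B_r(y)\}_{y\in\mathcal{C}}$ are pairwise disjoint by construction of $\mathcal{C}$, the subcollection indexed by $\mathcal{S}_x$ is also disjoint, and therefore
\[
\sum_{y\in\mathcal{S}_x}|B_r(y)|\leq |B_{7r}(x)|.
\]

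Next, I would bound each side using the volume comparison results of Section \ref{subsec:diff-geom}. Since $M$ is Cartan-Hadamard, its sectional curvatures are nonpositive, so by Remark \ref{rmk:vol-Rd} each geodesic ball of radius $r$ has volume at least that of the Euclidean ball, i.e.
\[
|B_r(y)|\geq w(\dm)\, r^{\dm}\qquad\text{for every }y\in M.
\]
For the upper bound on $|B_{7r}(x)|$, I would use the lower curvature bound $\calK\geq -c_m$ in \eqref{eqn:lb-cm} together with Theorem \ref{cor:AV-bounds}, which yields
\[
|B_{7r}(x)|\leq \dm\, w(\dm)\int_0^{7r}\left(\frac{\sinh(\sqrt{c_m}\, s)}{\sqrt{c_m}}\right)^{\dm-1}\d s.
\]

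Finally, combining these three inequalities gives
\[
\#(\mathcal{S}_x)\cdot w(\dm)\, r^{\dm}\leq \sum_{y\in\mathcal{S}_x}|B_r(y)|\leq |B_{7r}(x)|\leq \dm\, w(\dm)\int_0^{7r}\left(\frac{\sinh(\sqrt{c_m}\, s)}{\sqrt{c_m}}\right)^{\dm-1}\d s,
\]
and dividing by $w(\dm)\, r^{\dm}$ recovers the bound \eqref{eqn:card-Sx2} with the constant $N(\dm,c_m,r)$ from \eqref{eqn:N}. There is no real obstacle here; the only subtlety worth flagging is making sure to use the correct side of the curvature hypothesis for each estimate: the nonpositivity $\calK\leq 0$ gives the Euclidean lower bound for $|B_r(y)|$, while the lower bound $\calK\geq -c_m$ controls volume growth from above for $|B_{7r}(x)|$.
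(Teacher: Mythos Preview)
Your proof is correct and follows essentially the same approach as the paper: disjointness of the Vitali balls gives $\sum_{y\in\mathcal{S}_x}|B_r(y)|\leq |B_{7r}(x)|$, then the Euclidean lower bound on $|B_r(y)|$ (from $\calK\leq 0$) and the hyperbolic upper bound on $|B_{7r}(x)|$ (from $\calK\geq -c_m$) yield the counting estimate. Your closing remark about which curvature inequality is used for which bound is a helpful clarification that the paper leaves implicit.
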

\begin{proof}
If $y\in\mathcal{S}_x$, then we have $\dist(x, y)<6r$, which implies $y\in B_{6r}(x)$. Hence,
\[
\mathcal{S}_x\subset B_{6r}(x).
\]
We also infer from here that for any $y \in \mathcal{S}_x\subset B_{6r}(x)$, $B_r(y)$ is included in $B_{7r}(x)$. Also note that by construction, $\{B_r(y)\}_{y\in \mathcal{S}_x}$ are disjoint. Hence, conclude that $\{B_r(y)\}_{y\in \mathcal{S}_x}$ are disjoint sets in $B_{7r}(x)$. From this observation we get
\[
\#(\mathcal{S}_x)\inf_{z\in M}|B_r(z)|\leq\sum_{y\in\mathcal{S}_x}|B_r(y)|\leq |B_{7r}(x)|\leq \sup_{z\in M}|B_{7r}(z)|.
\]
We then find
\begin{equation}
\label{eqn:card-Sx1}
\#(\mathcal{S}_x)\leq \frac{\sup_{z\in M}|B_{7r}(z)|}{\inf_{z\in M}|B_r(z)|}.
\end{equation}

From the comparison with volumes of balls in $\bbr^\dm$ (see \eqref{eqn:comp-Rd}), we have
\[
\inf_{z\in M}|B_r(z)|\geq w(\dm)r^\dm,
\]
while from the lower bound of curvatures and Corollary \ref{cor:AV-bounds} we get
\[
\sup_{z\in M}|B_{7r}(z)|\leq \dm w(\dm)\int_0^{7r}\left(\frac{\sinh(\sqrt{c_m}s)}{\sqrt{c_m}}\right)^{\dm-1}\d s.
\]
Using these bounds in \eqref{eqn:card-Sx1} we find \eqref{eqn:card-Sx2}.
\end{proof}

The main result of this section is the following theorem.
\begin{theorem}[HLS-type inequality on Cartan-Hadamard manifolds - variant II]\label{thm:HLSM} 
Let $M$ be an $\dm$-dimensional Cartan-Hadamard manifold whose sectional curvatures satisfy \eqref{eqn:lb-cm}, for a positive constant $c_m$. Let $\lambda\in(0, \dm)$, $m\geq m_c$ be given (recall $m_c:=1+\frac{\lambda}{\dm}$), and $r>0$ fixed. Then, for any $\rho\in L^m(M) \cap \calP(M)$, we have
\[
\iint_{M\times M}\frac{\d\rho(x) \d\rho(y)}{\dist(x, y)^\lambda}\leq r^{-\lambda}+\tilde{C}(\lambda, \dm, r, c_m)\left(\int_M \rho(x)^m\d x\right)^{\frac{\lambda}{\dm(m-1)}},
\]
where the constant $\tilde{C}$ is given by
\begin{equation}
\label{eqn:tildeC}
\tilde{C}(\lambda, \dm, r, c_m)=C(\lambda, \dm)\left(\frac{\sinh(6\sqrt{c_m}r)}{6\sqrt{c_m}r}\right)^{(\dm-1)(m_c-1)}N(\dm,c_m, r).
\end{equation}
In \eqref{eqn:tildeC}, $C(\lambda,\dm)$ is the constant introduced in Theorem \ref{thm:HLSslow}, and $N(\dm,c_m, r)$ is given by \eqref{eqn:N}.
\end{theorem}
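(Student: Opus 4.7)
The plan is to split the double integral at the distance threshold $r$, handle the far-field by an elementary bound, and handle the near-field by applying Proposition \ref{prop:HLS-I} locally on each ball of the Vitali covering (with the center as the pole). After summing over the covering we interpolate the resulting $L^{m_c}$ norm between $L^1$ and $L^m$.

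First I would write
\[
\iint_{M\times M}\frac{\d\rho(x)\d\rho(y)}{\dist(x,y)^\lambda}=\iint_{\dist(x,y)\geq r}\frac{\d\rho(x)\d\rho(y)}{\dist(x,y)^\lambda}+\iint_{\dist(x,y)<r}\frac{\d\rho(x)\d\rho(y)}{\dist(x,y)^\lambda}.
\]
The first summand is bounded immediately by $r^{-\lambda}\iint \d\rho\d\rho = r^{-\lambda}$, since $\rho\in\calP(M)$. For the second summand I would use the Vitali covering: fix the countable set $\mathcal{C}$ constructed in \eqref{eqn:M-C} and choose a Borel partition $\{A_\alpha\}_{\alpha\in\mathcal{C}}$ of $M$ with $A_\alpha\subset B_{5r}(x_\alpha)$. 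If $x\in A_\alpha$ and $\dist(x,y)<r$, then by the triangle inequality $y\in B_{6r}(x_\alpha)$, which yields
\[
\iint_{\dist(x,y)<r}\frac{\d\rho(x)\d\rho(y)}{\dist(x,y)^\lambda}\leq\sum_{\alpha\in\mathcal{C}}\iint_{B_{6r}(x_\alpha)\times B_{6r}(x_\alpha)}\frac{\d\rho(x)\d\rho(y)}{\dist(x,y)^\lambda}.
\]

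Next I apply Proposition \ref{prop:HLS-I} to each piece. Set $m_\alpha:=\int_{B_{6r}(x_\alpha)}\rho(x)\d x$ and, for those $\alpha$ with $m_\alpha>0$, let $\tilde\rho_\alpha:=m_\alpha^{-1}\rho\chi_{B_{6r}(x_\alpha)}\in \calP(M)$; this is compactly supported, and since $B_{6r}(x_\alpha)$ has finite volume and $\rho\in L^m(M)$ with $m\geq m_c$, Hölder's inequality places $\tilde\rho_\alpha$ in $L^{m_c}(M)$. Applying Proposition \ref{prop:HLS-I} to $\tilde\rho_\alpha$ with pole $x_\alpha$ and using that $t\mapsto \sinh(t)/t$ is non-decreasing together with $\theta_x\leq 6r$ on the support of $\tilde\rho_\alpha$, one obtains
\[
\iint\frac{\d\tilde\rho_\alpha(x)\d\tilde\rho_\alpha(y)}{\dist(x,y)^\lambda}\leq C(\lambda,\dm)\left(\frac{\sinh(6\sqrt{c_m}r)}{6\sqrt{c_m}r}\right)^{(\dm-1)(m_c-1)}\int_{B_{6r}(x_\alpha)}\tilde\rho_\alpha(x)^{m_c}\d x.
\]
Multiplying through by $m_\alpha^2$ and using $m_\alpha\leq 1$ together with $2-m_c=1-\lambda/\dm>0$ yields
\[
\iint_{B_{6r}(x_\alpha)^2}\frac{\d\rho\d\rho}{\dist^\lambda}\leq C(\lambda,\dm)\left(\frac{\sinh(6\sqrt{c_m}r)}{6\sqrt{c_m}r}\right)^{(\dm-1)(m_c-1)}\int_{B_{6r}(x_\alpha)}\rho(x)^{m_c}\d x.
\]

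Finally I sum over $\alpha\in\mathcal{C}$. By Lemma \ref{lem:card-Sx}, each point $x\in M$ belongs to at most $N(\dm,c_m,r)$ of the balls $B_{6r}(x_\alpha)$, so $\sum_\alpha\int_{B_{6r}(x_\alpha)}\rho^{m_c}\d x\leq N(\dm,c_m,r)\int_M\rho^{m_c}\d x$. Combining this with the far-field bound gives
\[
\iint_{M\times M}\frac{\d\rho(x)\d\rho(y)}{\dist(x,y)^\lambda}\leq r^{-\lambda}+\tilde C(\lambda,\dm,r,c_m)\int_M \rho(x)^{m_c}\d x.
\]
To obtain the stated form in terms of $\|\rho\|_m$, I interpolate: by log-convexity of $L^p$ norms and $\|\rho\|_1=1$,
\[
\|\rho\|_{m_c}^{m_c}\leq \|\rho\|_1^{\theta m_c}\|\rho\|_m^{(1-\theta)m_c}=\left(\int_M \rho^m\d x\right)^{\lambda/(\dm(m-1))},
\]
where $\theta=(m-m_c)/(m_c(m-1))$ and the computation $(1-\theta)m_c/m=\lambda/(\dm(m-1))$ is elementary. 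Substituting gives the claim.

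The main obstacle is not any single estimate but the bookkeeping that makes the three quantitative ingredients fit together with the right constants: the scaling factor $m_\alpha^{2-m_c}$ arising from the normalization $\tilde\rho_\alpha$ has to be bounded by $1$, which works precisely because $\lambda<\dm$; the sinh-factor has to be pulled out of the integral using monotonicity, which works precisely because each support lies in $B_{6r}(x_\alpha)$; and the overlap count has to be uniform in the geometry, which is exactly what Lemma \ref{lem:card-Sx} provides under the lower curvature bound \eqref{eqn:lb-cm}. Extending the result from $m_c$ to general $m\geq m_c$ then reduces to the standard interpolation step above.
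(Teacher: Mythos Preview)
Your proof is correct and follows essentially the same approach as the paper: split at distance $r$, cover the near-field diagonal by the Vitali balls $B_{6r}(x_\alpha)$, apply Proposition~\ref{prop:HLS-I} to the normalized restrictions with pole $x_\alpha$, absorb the Jacobian factor via monotonicity of $\sinh t/t$, control the overlap by Lemma~\ref{lem:card-Sx}, and finish by interpolating $L^{m_c}$ between $L^1$ and $L^m$. The only cosmetic difference is that you make the partition $\{A_\alpha\}$ explicit where the paper argues the inclusion $\{\dist(x,y)<r\}\subset\bigcup_{z\in\mathcal C}B_{6r}(z)\times B_{6r}(z)$ directly from the covering.
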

\begin{proof}
Take $x, y$ to be two points in $M$ which satisfy $\dist(x, y)<r$. Since $x\in M$ and $M=\bigcup_{z\in \mathcal{C}}B_{5r}(z)$, there exists $z\in \mathcal{C}$ such that $x\in B_{5r}(z)$. Then, 
\[
\dist(z, y)\leq \dist(z, x)+\dist(x, y)<5r+r=6r,
\]
which implies $y\in B_{6r}(z)$. Since $x\in B_{5r}(z)\subset B_{6r}(z)$, we have
\[
(x, y)\in B_{6r}(z)\times B_{6r}(z).
\]
This implies
\begin{equation}
\label{eqn:subset}
\{(x, y)\in M\times M: \dist(x, y)<r\}\subset \bigcup_{z\in \mathcal{C}}B_{6r}(z)\times B_{6r}(z).
\end{equation}

Use \eqref{eqn:subset} to estimate 
\begin{align}
\iint_{M\times M}\frac{\d\rho(x)\d\rho(y)}{\dist(x, y)^\lambda}& = \iint_{\dist(x, y)\geq r}\frac{\d\rho(x)\d\rho(y)}{\dist(x, y)^\lambda}+\iint_{\dist(x, y)<r}\frac{\d\rho(x)\d\rho(y)}{\dist(x, y)^\lambda} \nonumber \\
&\leq r^{-\lambda}+\sum_{z\in\mathcal{C}}\iint_{B_{6r}(z)\times B_{6r}(z)}\frac{\d\rho(x)\d\rho(y)}{\dist(x, y)^\lambda}. \label{eqn:est-r1}
\end{align}

We remove all elements $z\in \mathcal{C}$ which satisfy
\[
\int_{B_{6r}(z)}\d \rho(x)=0,
\]
and define $\mathcal{C}' \subset \mathcal{C}$ by
\[
\mathcal{C}'=\mathcal{C}\cap\left\{z\in M:\int_{B_{6r}(z)}\d \rho(x)>0\right\}.
\]
Then, by \eqref{eqn:est-r1} we have
\begin{align}\label{D-2}
\iint_{M\times M}\frac{\d\rho(x)\d\rho(y)}{\dist(x, y)^\lambda}&\leq  r^{-\lambda}+\sum_{z\in\mathcal{C}'}\iint_{B_{6r}(z)\times B_{6r}(z)}\frac{\d\rho(x)\d\rho(y)}{\dist(x, y)^\lambda}.
\end{align}

Now, for all $z\in\mathcal{C}'$ we consider the normalized restrictions of measure $\rho$ on $B_{6r}(z)$, given by
\[
\rho_z(x)=\begin{cases}
\displaystyle\frac{\rho(x)}{\|\rho\|_{L^1(B_{6r}(z))}}\qquad&\text{ when }x\in B_{6r}(z),\\[10pt]
\displaystyle 0&\text{ otherwise}.
\end{cases}
\]

Note that $\rho_z$ is compactly supported, for all $z$. Fix $z \in \mathcal{C}'$ and substitute $\rho_z$ into \eqref{D-1} with $z$ as pole (use $\rho_z$ in place of $\rho$, and $z$ in place of $\p$ in Proposition \ref{prop:HLS-I} -- see also Remark \ref{rmk:arbitrary-pole}), to get
\begin{align}
& \frac{1}{\|\rho\|_{L^1(B_{6r}(z))}^2}\iint_{B_{6r}(z)\times B_{6r}(z)}\frac{\d\rho(x) \d\rho(y)}{\dist(x, y)^\lambda}=
\iint_{M\times M}\frac{\d\rho_z(x) \d\rho_z(y)}{\dist(x, y)^\lambda} \nonumber \\
&\hspace{3cm} \leq C(\lambda, \dm)\int_M \left(\frac{\sinh(\sqrt{c_m} \dist(x, z))}{\sqrt{c_m} \dist(x, z)}\right)^{(\dm-1)(m_c-1)}\rho_z(x)^{m_c} \d x \nonumber \\
&\hspace{3cm} =\frac{C(\lambda, \dm)}{\|\rho\|_{L^1(B_{6r}(z))}^{m_c}}\int_{B_{6r}(z)}\left(\frac{\sinh(\sqrt{c_m}\dist(x, z))}{\sqrt{c_m}\dist(x, z)}\right)^{(\dm-1)(m_c-1)}\rho(x)^{m_c}\d x. \label{eqn:ineq-rhoz}
\end{align}

Since
\[
\frac{\d}{\d\theta}\left(\frac{\sinh\theta}{\theta}\right)=\frac{\cosh\theta}{\theta^2}(\theta-\tanh\theta),
\]
and $\theta\geq\tanh\theta$ for all $\theta\geq0$, we know that $\frac{\sinh\theta}{\theta}$ is non-decreasing for all $\theta\geq0$. From this property, we get
\[
\frac{\sinh(\sqrt{c_m}\dist(x, z))}{\sqrt{c_m}\dist(x, z)}\leq \frac{\sinh(6\sqrt{c_m}r)}{6\sqrt{c_m}r}, \qquad \text{ for all } x\in B_{6r}(z).
\]
Then, multiply \eqref{eqn:ineq-rhoz} by $\|\rho\|_{L^1(B_{6r}(z))}^2$ and use the inequality above to find
\[
\iint_{B_{6r}(z)\times B_{6r}(z)}\frac{\d\rho(x) \d\rho(y)}{\dist(x, y)^\lambda}\leq \frac{C(\lambda, \dm)}{\|\rho\|_{L^1(B_{6r}(z))}^{m_c-2}}\left(\frac{\sinh(6\sqrt{c_m}r)}{6\sqrt{c_m}r}\right)^{(\dm-1)(m_c-1)}\int_{B_{6r}(z)}\rho(x)^{m_c}\d x.
\]
Furthermore, use $\|\rho\|_{L^1(B_{6r}(z))}\leq\|\rho\|_{L^1(M)}=1$ and $m_c-2=\frac{\lambda}{\dm}-1<0$, to get
\begin{align}\label{D-3}
\iint_{B_{6r}(z)\times B_{6r}(z)}\frac{\d\rho(x) \d\rho(y)}{\dist(x, y)^\lambda}\leq C(\lambda, \dm)\left(\frac{\sinh(6\sqrt{c_m}r)}{6\sqrt{c_m}r}\right)^{(\dm-1)(m_c-1)}\int_{B_{6r}(z)}\rho(x)^{m_c}\d x.
\end{align}

Finally, use \eqref{D-3} in \eqref{D-2} to get
\begin{equation}
\label{eqn:D-4}
\iint_{M\times M}\frac{\d\rho(x)\d\rho(y)}{\dist(x, y)^\lambda}
\leq r^{-\lambda}+C(\lambda, \dm)\left(\frac{\sinh(6\sqrt{c_m}r)}{6\sqrt{c_m}r}\right)^{(\dm-1)(m_c-1)}\sum_{z\in \mathcal{C}'}\int_{B_{6r}(z)}\rho(x)^{m_c}\d x.
\end{equation}

By Lemma \ref{lem:card-Sx}, for any $x\in M$ there are at most $N(\dm,c_m, r)$ balls $\{B_{6r}(z)\}_{z\in\mathcal{C}'}$ which contain $x$. This implies
\[
\sum_{z\in\mathcal{C}'}\chi_{B_{6r}(z)}\leq N(\dm,c_m, r)\chi_{M},
\]
where $\chi_A$ is the characteristic function of the set $A$, defined as
\[
\chi_A(x)=\begin{cases}
1&\quad\text{when }x\in A,\\
0&\quad\text{when }x\not\in A.
\end{cases}
\]

Then, we have
\begin{align}
\sum_{z\in\mathcal{C}'}\int_{B_{6r}(z)}\rho(x)^{m_c}\d x&=\sum_{z\in\mathcal{C}'}\int_{M}\chi_{B_{6r}(z)}(x)\rho(x)^{m_c}\d x=\int_M\left(\sum_{z\in\mathcal{C}'}\chi_{B_{6r}(z)}(x)\right)\rho(x)^{m_c}\d x \nonumber \\
&\leq \int_M N(\dm,c_m, r)\rho(x)^{m_c}\d x=N(\dm,c_m, r)\int_M \rho(x)^{m_c}\d x. \label{eqn:ineq-B6r}
\end{align}
Finally, using \eqref{eqn:ineq-B6r} in \eqref{eqn:D-4}, we arrive at
\begin{equation}
\label{eqn:D-5}
\iint_{M\times M}\frac{\d\rho(x)\d\rho(y)}{\dist(x, y)^\lambda}\leq r^{-\lambda}+C(\lambda, \dm)\left(\frac{\sinh(6\sqrt{c_m}r)}{6\sqrt{c_m}r}\right)^{(\dm-1)(m_c-1)}N(\dm,c_m, r)\int_M \rho(x)^{m_c}\d x.
\end{equation}

For $m>m_c$, the H\"{o}lder inequality with $p=\frac{m-1}{m-m_c}$ and $q=\frac{m-1}{m_c-1}$, yields
\[
\int_M\rho(x)^{m_c}\d x\leq \left(\int_M\rho(x)\d x\right)^{\frac{m-m_c}{m-1}} \left(\int_M\rho(x)^{m}\d x\right)^{\frac{m_c-1}{m-1}}.
\]
Since $\rho$ integrates to $1$, this inequality together with \eqref{eqn:D-5} lead to the conclusion of the theorem.
\end{proof}

\begin{remark}
The relationship between $m$, $\dm$, and $\lambda$ is $m \geq 1+\frac{\lambda}{\dm}$ and $0<\lambda<\dm$. Hence, the range of $\lambda$ is
\[
\begin{cases}
0<\lambda\leq\dm(m-1),&\qquad\text{when }1<m<2,\\[2pt]
0<\lambda<\dm,&\qquad\text{when }m\geq 2.
\end{cases}
\]
For both cases, $0<\lambda<\min(\dm, \dm(m-1))$.
\end{remark}


\section{Lower-semicontinuity of the energy functional on $\bbh^\dm$}
\label{sect:lsc}
\setcounter{equation}{0}

From here on in this paper we consider the manifold $M$ to be the hyperbolic space $\bbh^\dm$ of constant sectional curvature $-1$. Note that $\bbh^\dm$ is a Cartan-Hadamard manifold  which is also a homogeneous manifold. In this section we study the lower semicontinuity of the entropy and the interaction energy, under suitable conditions on the interaction potential. 

\begin{proposition}\label{PUB}
Let $m>1$ and consider the energy functional \eqref{eqn:energy} on $\bbh^\dm$, with an interaction potential that satisfies assumption \textbf{(H)}, and in addition, that
\begin{equation}
\label{eqn:h-sing}
\lim_{\theta\to0+}\theta^\alpha h(\theta)=0, \quad \qquad  \text{ for some } 0<\alpha<\min(\dm(m-1), \dm).
\end{equation}
Let $\{\rho_k\}_{k\in\mathbb{N}}\subset L^m(\bbh^\dm) \cap \calP(\bbh^\dm)$ be a minimizing sequence of $E[\rho]$. Then, for any $0\leq\beta<\min(\dm(m-1), \dm)$,
\begin{equation}
\label{eqn:3-terms}
\int_{\bbh^\dm}\rho_k(x)^m\d x,\quad \iint_{ \bbh^\dm \times \bbh^\dm}\frac{\rho_k(x)\rho_k(y)\d x\d y}{\dist(x, y)^{\beta}}, \quad \text{ and} \quad \iint_{\bbh^\dm\times \bbh^\dm}h(\dist(x, y))\rho_k(x)\rho_k(y)\d x\d y
\end{equation}
are uniformly bounded (from above and below) in $k$. 
Also, 
\[
\lim_{\delta\to0+}\iint_{\dist(x, y)<\delta}h(\dist(x, y))\rho_k(x)\rho_k(y)\d x\d y=0,
\]
uniformly in $k$.
\end{proposition}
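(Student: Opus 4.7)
The plan is to use the HLS-type inequality of Theorem \ref{thm:HLSM} in tandem with the lower bound on $h$ from Lemma \ref{esth} to close the loop: first control $\int \rho_k^m\,\d x$, then cascade this to the other two quantities, and finally to the small-distance cut-off.

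\textbf{Entropy bound.} Since $h$ satisfies \eqref{Conh}, Lemma \ref{esth} yields non-negative constants $\gamma_1,\gamma_2$ with $h(\theta) \ge -\gamma_1 \theta^{-\alpha}-\gamma_2$ for all $\theta>0$. Because $\alpha<\min(\dm(m-1),\dm)$, in particular $\alpha<\dm$ and $m>1+\alpha/\dm=m_c$; thus Theorem \ref{thm:HLSM} applies on $\bbh^\dm$ (with $c_m=1$) at $\lambda=\alpha$. Substituting both bounds into the definition of $E$ yields, for any fixed $r>0$,
\[
E[\rho_k] \;\ge\; \frac{1}{m-1}\int_{\bbh^\dm}\rho_k^m\,\d x \;-\; \frac{\gamma_1 \tilde{C}}{2}\left(\int_{\bbh^\dm}\rho_k^m\,\d x\right)^{\alpha/(\dm(m-1))} - C',
\]
where $C'$ absorbs the $r^{-\alpha}$ and $\gamma_2$ terms. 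Since $\alpha/(\dm(m-1))<1$, the right-hand side, viewed as a function of $y:=\int\rho_k^m\,\d x$, is bounded below and tends to $+\infty$ as $y\to\infty$. This both shows $\inf E>-\infty$ (so ``minimizing sequence'' is meaningful) and, combined with $E[\rho_k]\le C$, gives a uniform bound on $\int \rho_k^m\,\d x$.

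\textbf{Cascaded bounds.} With $\|\rho_k\|_{L^m}$ controlled, a second application of Theorem \ref{thm:HLSM} at $\lambda=\beta$, for any $\beta\in(0,\min(\dm(m-1),\dm))$, bounds $\iint \rho_k(x)\rho_k(y)/\dist(x,y)^\beta\,\d x\,\d y$; the case $\beta=0$ is immediate because $\rho_k$ is a probability density. For the interaction energy, the lower bound follows from $h\ge -\gamma_1 \dist^{-\alpha}-\gamma_2$ together with the previous estimate, while the upper bound follows from $\tfrac12\iint h\rho_k\rho_k\,\d x\,\d y \le E[\rho_k]\le C$, using non-negativity of the entropy.

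\textbf{Uniform vanishing near the diagonal.} Fix $\alpha''\in(\alpha,\min(\dm(m-1),\dm))$ and decompose $h=h_+-h_-$. Monotonicity of $h$ yields $h_+(d)\le h_+(1)$ for $d\le 1$, while Lemma \ref{esth} gives $h_-(d)\le \gamma_1 d^{-\alpha}+\gamma_2$. For $d<\delta\le 1$, the elementary inequalities $1 \le (\delta/d)^{\alpha''}$ and $d^{-\alpha}\le \delta^{\alpha''-\alpha} d^{-\alpha''}$ imply
\[
\left|\iint_{\dist(x,y)<\delta} h(\dist(x,y))\rho_k(x)\rho_k(y)\,\d x\,\d y\right| \;\le\; \bigl[(h_+(1)+\gamma_2)\delta^{\alpha''} + \gamma_1 \delta^{\alpha''-\alpha}\bigr]\iint \frac{\rho_k(x)\rho_k(y)}{\dist(x,y)^{\alpha''}}\,\d x\,\d y.
\]
The last integral is uniformly bounded in $k$ by the cascaded bounds applied with $\beta=\alpha''$, and both powers of $\delta$ are positive, so the whole estimate vanishes as $\delta\to 0+$ uniformly in $k$.

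The main obstacle is the coercivity used in the entropy bound: the linear term $\int \rho_k^m$ must dominate the HLS-induced sub-linear term $(\int \rho_k^m)^{\alpha/(\dm(m-1))}$, which is precisely where the hypothesis $\alpha<\dm(m-1)$ is sharply used. A subtler point appears in the last step: a pointwise-in-$k$ dominated-convergence argument would not produce uniform smallness in $k$, so one trades a small portion of the singularity for a positive power of $\delta$ by upgrading to the stronger exponent $\alpha''$.
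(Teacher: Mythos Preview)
Your proof is correct and follows essentially the same route as the paper: Lemma~\ref{esth} plus the HLS-type inequality of Theorem~\ref{thm:HLSM} give coercivity for the entropy, which then cascades to the Riesz and interaction terms, and the uniform vanishing near the diagonal is obtained by upgrading the exponent $\alpha$ to a strictly larger $\alpha''<\min(\dm(m-1),\dm)$ so as to extract a positive power of $\delta$. The only cosmetic differences are that the paper fixes a single auxiliary exponent $\tilde{\alpha}>\alpha$ from the outset (and reaches general $\beta\le\tilde{\alpha}$ via H\"older rather than a second direct application of Theorem~\ref{thm:HLSM}), and handles the positive part of $h$ near the diagonal by a case split on whether $h$ is singular at $0+$ rather than by your explicit $h_+/h_-$ decomposition.
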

\begin{proof}
Fix $0\leq \beta<\min(\dm(m-1), \dm)$, and define 
\[
\tilde{\alpha}=\max\left(\frac{1}{2}(\alpha+\min(\dm(m-1), \dm)), \beta\right),
\]
where $\alpha$ is the constant from \eqref{eqn:h-sing}. Then, $0<\alpha<\tilde{\alpha}<\min(\dm(m-1), \dm)$, and hence 
\[
\lim_{\theta\to0+}h(\theta)\theta^{\tilde{\alpha}}=\lim_{\theta\to0+}\left(h(\theta)\theta^\alpha\times \theta^{\tilde{\alpha}-\alpha}\right)=0.
\]
From the assumptions on $h$ (see \textbf{(H)}, \eqref{eqn:h-infty} and \eqref{eqn:h-sing}) and Lemma \ref{esth}, there exist $\gamma_1,\gamma_2\geq0$ such that
\[
h(\theta)\geq -\gamma_1\theta^{-\tilde{\alpha}}-\gamma_2,\qquad\forall \theta>0.
\]
Then, given that $\{\rho_k\}_{k\in\mathbb{N}}$ is a minimizing sequence, we have
\begin{equation}
\label{eqn:Erho1-lb1}
\begin{aligned}
E[\rho_1] &\geq E[\rho_k]  \\
& = \frac{1}{m-1}\int_{\bbh^\dm}\rho_k(x)^m\d x+\frac{1}{2}\iint_{\bbh^\dm \times \bbh^\dm} h(\dist(x, y))\rho_k(x)\rho_k(y)\d x\d y \\
& \geq\frac{1}{m-1}\int_{\bbh^\dm}\rho_k(x)^m\d x-\frac{\gamma_1}{2}\iint_{\bbh^\dm \times \bbh^\dm}\frac{\rho_k(x)\rho_k(y)\d x\d y}{\dist(x, y)^{\tilde{\alpha}}} - \frac{\gamma_2}{2}. 
\end{aligned}
\end{equation}

Fix $r>0$ arbitrary. By using the HLS inequality on Cartan-Hadamard manifolds (Theorem \ref{thm:HLSM}) for $M=\bbh^\dm$, $c_m=1$ and $\lambda=\tilde{\alpha}$ we find
\begin{align}\label{AHLS}
\iint_{\bbh^\dm\times \bbh^\dm}\frac{\d \rho_k(x)\d \rho_k(y)}{\dist(x, y)^{\tilde{\alpha}}}\leq r^{-\tilde{\alpha}}+\tilde{C}(\tilde{\alpha}, \dm, r, 1)\left(\int_{\bbh^\dm} \rho_k(x)^m\d x\right)^{\frac{\tilde{\alpha}}{\dm(m-1)}}.
\end{align}
Combine \eqref{eqn:Erho1-lb1} and \eqref{AHLS} to get
\[
E[\rho_1]\geq \frac{1}{m-1}\int_{\bbh^\dm}\rho_k(x)^m\d x-\frac{\gamma_1\tilde{C}(\tilde{\alpha}, \dm, r, 1)}{2}\left(\int_{\bbh^\dm} \rho_k(x)^m\d x\right)^{\frac{\tilde{\alpha}}{\dm(m-1)}}-\frac{1}{2}(\gamma_1 r^{-\tilde{\alpha}}+\gamma_2).
\]
Since $0<\frac{\tilde{\alpha}}{\dm(m-1)}<1$, if $\int_{\bbh^\dm}\rho_k(x)^m\d x$ tends to $\infty$ as $k \to \infty$, then the right hand side of the above inequality also tends to $\infty$. Therefore, we conclude that $ \int_{\bbh^\dm}\rho_k(x)^m\d x $
is bounded from above uniformly in $k\in\mathbb{N}$, which shows the statement for the first term in \eqref{eqn:3-terms}; note that a uniform lower bound is immediate, as the expressions are non-negative.

Given that $ \int_{\bbh^\dm}\rho_k(x)^m\d x $ are uniformly bounded from above, from \eqref{AHLS} we then get the uniform boundedness in $k \in \mathbb{N}$ of $\iint_{\bbh^\dm \times \bbh^\dm}\frac{\rho_k(x)\rho_k(y)\d x\d y}{\dist(x, y)^{\tilde{\alpha}}}$. This allows us to define the uniform upper bound $0<U<\infty$ as 
\[
U:=\sup_{k\in\mathbb{N}}\iint_{\bbh^\dm\times \bbh^\dm}\frac{\rho_k(x)\rho_k(y)\d x\d y}{\dist(x, y)^{\tilde{\alpha}}}.
\]

Since $\tilde{\alpha}\geq\beta$, we can apply the H\"{o}lder inequality to find
\[
\iint_{\bbh^\dm \times \bbh^\dm}\frac{\rho_k(x)\rho_k(y)\d x\d y}{\dist(x, y)^{\beta}}\leq \left(\iint_{\bbh^\dm \times \bbh^\dm}\frac{\rho_k(x)\rho_k(y)\d x\d y}{\dist(x, y)^{\tilde{\alpha}}}\right)^{\frac{\beta}{\tilde{\alpha}}}\left(\iint_{\bbh^\dm \times \bbh^\dm}\rho_k(x)\rho_k(y)\d x\d y\right)^{\frac{\tilde{\alpha}-\beta}{\tilde{\alpha}}}.
\]
In the above, use $\rho_k\in \mathcal{P}(\bbh^\dm)$ and the definition of $U$ to further get
\[
\iint_{\bbh^\dm \times \bbh^\dm}\frac{\rho_k(x)\rho_k(y)\d x\d y}{\dist(x, y)^{\beta}}\leq U^{\frac{\beta}{\tilde{\alpha}}}.
\]
Conclude then that $\iint_{\bbh^\dm \times \bbh^\dm}\frac{\rho_k(x)\rho_k(y)\d x\d y}{\dist(x, y)^{\beta}}$ is uniformly bounded from above in $k\in\mathbb{N}$, for arbitrary $0\leq \beta<\min(\dm(m-1), \dm)$. This shows that the second term in \eqref{eqn:3-terms} is uniformly bounded from above in $k \in \mathbb{N}$, and since the expressions are non-negative, it is also bounded uniformly from below. 

It remains to argue about $ \iint_{\bbh^\dm \times \bbh^\dm} h(\dist(x, y))\rho_k(x)\rho_k(y)\d x\d y$, the third term in \eqref{eqn:3-terms}. Its uniform boundedness from above follows immediately (see \eqref{eqn:Erho1-lb1}) as
\begin{align*}
E[\rho_1]&\geq E[\rho_k]\geq \frac{1}{2}\iint_{\bbh^\dm \times \bbh^\dm } h(\dist(x, y))\rho_k(x)\rho_k(y)\d x\d y.
\end{align*}
We will now show that it is also uniformly bounded from below.

From assumption \eqref{eqn:h-sing} on $h$ and Lemma \ref{esth}, there exist $\gamma_1', \gamma_2'\geq0$ such that
\[
h(\theta)\geq -\gamma_1'\theta^{-\alpha}-\gamma_2', \qquad \forall \theta>0.
\]
If $h$ is singular at $0+$, then there exists $\delta_0>0$ such that $h(\theta)<0$ for any $0<\theta<\delta_0$. Hence, for any $0<\delta<\delta_0$, we have
\begin{align*}
\Biggl|\iint_{\dist(x, y)<\delta}h(\dist(x, y))\rho_k(x)\rho_k(y) \d x\d y\Biggr|&=\iint_{\dist(x, y)<\delta}|h(\dist(x, y))|\d\rho_k(x)\d\rho_k(y)\\
&\leq \iint_{\dist(x, y)<\delta}(\gamma_1'\dist(x, y)^{-\alpha}+\gamma_2')\rho_k(x)\rho_k(y)\d x\d y\\
&\leq \iint_{\dist(x, y)<\delta}(\gamma_1'+\gamma_2'\delta^\alpha )\dist(x, y)^{-\alpha}\rho_k(x)\rho_k(y)\d x\d y\\
&\leq (\gamma_1'+\gamma_2'\delta^\alpha )\iint_{\dist(x, y)<\delta}\dist(x, y)^{-\tilde{\alpha}}\delta^{\tilde{\alpha}-\alpha}\rho_k(x)\rho_k(y)\d x\d y\\
&\leq (\gamma_1'+\gamma_2'\delta^\alpha ) U\delta^{\tilde{\alpha}-\alpha}.
\end{align*}
We conclude from the above (note that $\alpha<\tilde{\alpha}$) that 
\begin{equation}
\label{eqn:int-unif}
\lim_{\delta\to0+}\iint_{\dist(x, y)<\delta}h(\dist(x, y))\rho_k(x)\rho_k(y)\d x\d y=0,
\end{equation}
uniformly in $k \in \mathbb{N}$. On the other hand, if $h$ is not singular at $0+$, showing \eqref{eqn:int-unif} is immediate.\\

Now fix $\epsilon>0$. By \eqref{eqn:int-unif}, there exists $\delta_1>0$ such that
\begin{align}\label{sing1}
\Biggl|\iint_{\dist(x, y)<\delta_1}h(\dist(x, y))\rho_k(x)\rho_k(y) \d x \d y\Biggr|<\epsilon, \qquad\forall k\in\mathbb{N}.
\end{align}
Then, compute and estimate using the monotonicity of $h$:
\begin{align*}
& \iint_{\bbh^\dm \times \bbh^\dm} h(\dist(x, y))\rho_k(x)\rho_k(y)\d x\d y \\
&\qquad =\iint_{\dist(x, y)<\delta_1} h(\dist(x, y))\rho_k(x)\rho_k(y)\d x\d y+\iint_{\dist(x, y)\geq\delta_1} h(\dist(x, y))\rho_k(x)\rho_k(y)\d x\d y\\
&\qquad \geq-\epsilon+\iint_{\dist(x, y)\geq \delta_1}h(\delta_1)\rho_k(x)\rho_k(y)\d x\d y\\
&\qquad \geq-\epsilon+\min(0, h(\delta_1)),
\end{align*}
which yields that $ \iint_{\bbh^\dm \times \bbh^\dm} h(\dist(x, y))\rho_k(x)\rho_k(y)\d x\d y$ is uniformly bounded from below in $k\in\mathbb{N}$.
\end{proof}

\begin{remark} 
\label{rmk:unif-int} Using the uniform bound from above of $\int_{\bbh^\dm}\rho_k^m(x)\d x$, one can show that $\{\rho_k\}_{k \geq 1}$ is uniformly integrable. Indeed, from
\[
\int_{\{x:\rho_k(x)\geq Q\}}\rho_k(x)\d x\leq \frac{1}{Q^{m-1}}\int_{\bbh^\dm}\rho_k^m(x)\d x,
\]
we get
\[
\lim_{Q\to\infty}\sup_{k}\int_{\{x:\rho_k(x)\geq Q\}}\rho_k(x)\d x=0,
\]
which yields the uniform integrability. 
\end{remark}

The main result of this section is given by the following proposition.
\begin{proposition}[Weak lower semicontinuity of the energy]
\label{prop:lsc}
Let $m>1$ and consider an interaction potential that satisfies the assumptions in Proposition \ref{PUB}. Let $\{\rho_k\}_{k\in\mathbb{N}}\subset 
 L^m(\bbh^\dm) \cap \calP_\p^{sd}(\bbh^\dm)$ be a minimizing sequence of the energy functional \eqref{eqn:energy}. Then, if $\{\rho_k\}$ converges weakly-$^*$ to $\rho_0$, then 
\[
\liminf_{k\to\infty}E[\rho_k]\geq E[\rho_0].
\]
\end{proposition}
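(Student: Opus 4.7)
The plan is to split $E$ into entropy and interaction and to establish weak-$^*$ lower semicontinuity separately for each. By Proposition \ref{PUB}, $\int_{\bbh^\dm}\rho_k^m\,\d x$ is uniformly bounded in $k$, so $\{\rho_k\}$ is bounded in the reflexive space $L^m(\bbh^\dm)$. I will first extract a subsequence (not relabeled) that converges weakly in $L^m$ to some $\tilde{\rho}\in L^m$; testing against $\phi\in C_c(\bbh^\dm)\subset L^{m'}(\bbh^\dm)\cap C_0(\bbh^\dm)$ and comparing with the weak-$^*$ limit $\rho_0$ identifies $\rho_0=\tilde{\rho}\,\d x$ as Radon measures, so $\rho_0\in L^m$. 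The entropy inequality $\int\rho_0^m\,\d x\leq\liminf_k\int\rho_k^m\,\d x$ then follows immediately from weak lower semicontinuity of $\|\cdot\|_{L^m}^m$.

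For the interaction, I introduce the truncation $h_\delta(\theta):=h(\theta)$ for $\theta\geq\delta$ and $h_\delta(\theta):=h(\delta)$ for $0\leq\theta<\delta$. Since $h$ is lsc and non-decreasing, so is $h_\delta$, with $h_\delta\geq h(\delta)>-\infty$, $h_\delta\geq h$ pointwise, and $h_\delta\searrow h$ as $\delta\to 0^+$. The discrepancy
\[
\iint h_\delta(\dist(x,y))\rho_k\rho_k\,\d x\,\d y-\iint h(\dist(x,y))\rho_k\rho_k\,\d x\,\d y = \iint_{\dist(x,y)<\delta}\bigl(h(\delta)-h(\dist(x,y))\bigr)\rho_k\rho_k\,\d x\,\d y
\]
tends to $0$ as $\delta\to 0^+$ uniformly in $k$: Proposition \ref{PUB} makes the $h$-integral over $\{\dist<\delta\}$ vanish uniformly, while
$|h(\delta)|\iint_{\dist<\delta}\rho_k\rho_k\,\d x\,\d y\leq|h(\delta)|\,\delta^\alpha\iint\rho_k\rho_k/\dist^\alpha\,\d x\,\d y\leq C\delta^\alpha|h(\delta)|\to 0$
by assumption \eqref{eqn:h-sing} together with the uniform HLS-type bound from the same proposition.

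Next, $\rho_k\rightharpoonup\rho_0$ in $C_0(\bbh^\dm)^*$ implies $\rho_k\otimes\rho_k\rightharpoonup\rho_0\otimes\rho_0$ against $C_0(\bbh^\dm\times\bbh^\dm)$ via Stone--Weierstrass density of tensor spans together with uniform mass bounds. Since $(x,y)\mapsto h_\delta(\dist(x,y))$ is lsc and bounded below by $h(\delta)$, approximating it from below by $C_c$ functions and invoking monotone convergence gives the weak-$^*$ lsc
\[
\iint h_\delta(\dist(x,y))\rho_0\rho_0\,\d x\,\d y\leq\liminf_k\iint h_\delta(\dist(x,y))\rho_k\rho_k\,\d x\,\d y,
\]
granted that mass is conserved, $\rho_0(\bbh^\dm)=1$. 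Combining with the uniform correction above, then letting $\delta\to 0^+$ and applying dominated convergence on the left (with integrable envelope $h_{\delta_0}-h$; integrability of $h^-$ against $\rho_0\otimes\rho_0$ follows from Fatou applied to the uniform bound $\iint\rho_k\rho_k/\dist^\alpha\leq C$ together with Lemma \ref{esth}) yields $\iint h(\dist(x,y))\rho_0\rho_0\leq\liminf_k\iint h(\dist(x,y))\rho_k\rho_k$. Adding this to the entropy bound concludes the argument.

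The hard part will be handling simultaneously the singularity of $h$ at the origin (where $h(\delta)\to-\infty$ is allowed) and its possible unbounded growth at infinity when $h_\infty=\infty$. Both are isolated via the single truncation $h_\delta$: the uniform HLS-type bound of Proposition \ref{PUB} tames the near-origin error independently of $k$, while weak-$^*$ lsc against $C_0$-tested product measures accommodates the growth at infinity without requiring bounded-continuous integrands. The implicit mass-conservation assumption $\rho_0\in\calP(\bbh^\dm)$ is a genuine subtlety; in the existence proofs of Sections \ref{sect:existence-inf}--\ref{sect:existence-0}, it is ensured either by the coercivity of $h$ at infinity (when $h_\infty=\infty$) or by translating the minimizing sequence along isometries of $\bbh^\dm$ (when $h_\infty=0$).
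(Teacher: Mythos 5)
Your entropy argument (reflexivity of $L^m$, identification of the weak $L^m$ limit with $\rho_0$ by testing against $C_c\subset L^{m'}\cap C_0$, and weak lower semicontinuity of the norm) is a valid and arguably more elementary route than the paper's pushforward to $T_\p\bbh^\dm$ combined with \cite[Theorem 2.34]{AmbrosioFuscoPallara2000}; the truncation $h_\delta$ and the uniform near-diagonal correction via Proposition \ref{PUB} are also sound. The gap is the step
\[
\iint h_\delta(\dist(x,y))\,\rho_0(x)\rho_0(y)\,\d x\,\d y\leq\liminf_{k}\iint h_\delta(\dist(x,y))\,\rho_k(x)\rho_k(y)\,\d x\,\d y,
\]
which you yourself condition on $\rho_0(\bbh^\dm)=1$. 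The portmanteau inequality for an lsc integrand bounded below requires narrow convergence of $\rho_k\otimes\rho_k$, not merely weak-$^*$ convergence: the negative part of $h_\delta(\dist(x,y))$ equals $|h(\delta)|$ on the diagonal and hence does not vanish at infinity in $\bbh^\dm\times\bbh^\dm$, so a concentrated piece of mass escaping to infinity carries away a strictly negative interaction contribution invisible to $\rho_0\otimes\rho_0$. Writing $h_\delta=h(\delta)+(h_\delta-h(\delta))$ and applying Fatou to the nonnegative part, what your argument actually yields without mass conservation is $\liminf_k\iint h_\delta\rho_k\rho_k\geq\iint h_\delta\rho_0\rho_0+h(\delta)\bigl(1-\|\rho_0\|_{L^1}^2\bigr)$, and the error term blows up as $\delta\to0^+$ when $h$ is singular at the origin.

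This cannot be deferred to the existence proofs as your closing paragraph suggests. In Theorem \ref{thm:exist-zero} (case $h_\infty=0$) the proposition is invoked in \eqref{eqn:Erho0-ineq} precisely for a limit $\rho_0\in\calM_+(\bbh^\dm)$ with possibly $\|\rho_0\|_{L^1(\bbh^\dm)}<1$, and the inequality $E[\rho_0]\leq\liminf_l E[\rho_{k_l}]$ is what drives the contradiction that eventually establishes full mass; the isometry construction there builds a competitor, it does not restore tightness of the minimizing sequence. The ingredient you are not using is the hypothesis $\rho_k\in\calP_\p^{sd}(\bbh^\dm)$, which the paper exploits through the bound $\|\rho_k\|_{L^\infty(B_R(\p)^c)}\leq|B_R(\p)|^{-1}$: mass far from the pole is uniformly spread out, so pairs with $\delta\leq\dist(x,y)\leq\delta^{-1}$ and at least one point outside $B_R(\p)$ contribute at most $O\bigl(|B_R(\p)|^{-1}\max(|h(\delta)|,|h(\delta^{-1})|)\,|B_{\delta^{-1}}(\p)|\bigr)$ uniformly in $k$, leaving only a compact region on which weak-$^*$ convergence suffices. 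You need to insert an analogous far-field estimate before (or in place of) the portmanteau step; as written, the proof establishes the proposition only under the additional assumption $\rho_0\in\calP(\bbh^\dm)$, which is exactly what is unavailable where the proposition is needed.
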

\begin{proof}
We will investigate the entropy and the interaction energy separately.
\smallskip

\noindent\textbf{Part 1: Entropy.} In this part, we study the lower semicontinuity of the entropy functional
\[
\frac{1}{m-1}\int_{\bbh^\dm}\rho(x)^m\d x.
\]
We use again $f(x)=\log_\p x$ (see \eqref{eqn:R-log} and \eqref{eqn:f-pf}) with $\p$ a fixed pole in $\bbh^\dm$, to transform
\[
\int_{\bbh^\dm}\rho(x)^m\d x=\int_{T_\p \bbh^\dm}\left(\frac{f_\#\rho(u)}{J(f^{-1}(u))}\right)^m J(f^{-1}(u))\d u.
\]
In the notations of  \cite[Section 2.6]{AmbrosioFuscoPallara2000}, the r.h.s. above can be written as
\[
G (\nu, \mu) = \int_{T_\p \bbh^\dm}U \left(\frac{\d \nu }{\d \mu}\right) \d \mu(u),
\]
where
\[
U(s)=s^m,\quad \nu=f_\#\rho(u)\d u,\quad \text{ and } \mu=J(f^{-1}(u))\d u.
\]

Hence, for the sequence $\rho_k$ we can write 
\[
\int_{\bbh^\dm}\rho_k(x)^m\d x = G (\nu_k,\mu),
\]
where $\nu_k=f_\#\rho_k(u)\d u$. Note that since $\rho_k$ converges weakly-$^*$ to $\rho_0$, $\nu_k$ converges weakly-$^*$ to $\nu_0:=f_\#\rho_0(u)\d u$. By  \cite[Theorem 2.34]{AmbrosioFuscoPallara2000}, since $T_\p M\simeq \bbr^\dm$ and $U$ is convex and lower semicontinuous, we have
\[
\liminf_{k\to \infty} G (\nu_k,\mu) \geq G (\nu_0,\mu),
\]
which then implies the lower-semicontinuity of the entropy component:
\begin{equation}
\label{eqn:lsc-entropy}
\liminf_{k\to\infty}\frac{1}{m-1}\int_{\bbh^\dm}\rho_k(x)^m\d x\geq \frac{1}{m-1}\int_{\bbh^\dm}\rho_0(x)^m\d x.
\end{equation}

\noindent\textbf{Part 2: Interaction energy.} We now study the lower semicontinuity of the interaction energy functional
\[
\iint_{\bbh^\dm \times \bbh^\dm} h(\dist(x, y))\rho_k(x)\rho_k(y)\d x\d y.
\]
Fix $\epsilon>0$. For $0<\delta<1$ (to be specified later), and $k\geq 1$ fixed, we divide the domain of the integration as follows:
\begin{align*}
&\iint_{\bbh^\dm\times \bbh^\dm} h(\dist(x, y))\rho_k(x)\rho_k(y)\d x\d y\\
&=\iint_{\dist(x, y)<\delta} h(\dist(x, y))\rho_k(x)\rho_k(y)\d x\d y
+\iint_{\dist(x, y)>\delta^{-1}} h(\dist(x, y))\rho_k(x)\rho_k(y)\d x\d y\\
&+\iint_{\delta\leq \dist(x, y)\leq \delta^{-1}} h(\dist(x, y))\rho_k(x)\rho_k(y)\d x\d y \\
&=:\mathcal{I}_1+\mathcal{I}_2+\mathcal{I}_3.
\end{align*}

We will investigate the three terms $\mathcal{I}_1$, $\mathcal{I}_2$ and $\mathcal{I}_3$ separately. 

{\em Estimate of $\mathcal{I}_1$.} We have already investigated $\mathcal{I}_1$ in Proposition \ref{PUB}. By the uniform (in $k$) convergence in \eqref{eqn:int-unif},  we can choose $\delta_1>0$ such that
\begin{equation}\label{estI1}
-\epsilon<\mathcal{I}_1<\epsilon, \qquad \text{ for any } \delta\in(0, \delta_1).
\end{equation}

{\em Estimate of $\mathcal{I}_2$.} Recall that we only have two cases: $h_\infty=0$ and $h_\infty=\infty$. Since $h$ is increasing and converges to $h_\infty$, there exists $\delta_2>0$ such that $-\epsilon<h(\theta)$ for any $\theta>\delta_2^{-1}$. Hence, for any $\delta\in(0, \delta_2)$, we can estimate $\mathcal{I}_2$ as
\begin{align}\label{estI2}
\mathcal{I}_2\geq \iint_{\dist(x, y)>\delta^{-1}}(-\epsilon)\rho_k(x)\rho_k(y)\d x \d y\geq -\epsilon.
\end{align}
Note that both \eqref{estI1} and \eqref{estI2} hold for arbitrary $k \geq 1$ fixed, while $\delta_1$ and $\delta_2$ depend on $\epsilon$, but do not depend on $k$.

{\em Estimate of $\mathcal{I}_3$.} Since $\rho_k$ is radially symmetric and decreasing, for any $x\in \bbh^\dm$ we have
\[
1\geq \int_{B_{\theta_x}(\p)}\rho_k(y)\d y\geq \int_{B_{\theta_x}(\p)}\rho_k(x)\d y=\rho_k(x)|B_{\theta_x}(\p)|.
\]
This implies
\[
\rho_k(x)\leq |B_{\theta_x}(\p)|^{-1},\qquad \text{ for all } x\in \bbh^\dm, \text{ and }  k \geq 1,
\]
which yields
\begin{equation}
\label{eqn:rho-BRoc}
\|\rho_k\|_{L^\infty(B_R(\p)^c)}\leq |B_R(\p)|^{-1},\qquad \text{ for all } R>0, \text{ and } k \geq 1.
\end{equation}
Note that in these considerations we used implicitly the Riesz rearrangement property on $\bbh^\dm$ (Theorem \ref{RIH}), which allows us to consider only radially symmetric and decreasing minimizing sequences $\rho_k$; this is in fact the only place in the paper where we use Theorem \ref{RIH}.

Fix $R>0$. We divide $\mathcal{I}_3$ as follows:
\begin{align*}
\mathcal{I}_3&=\iint_{\substack{\delta\leq \dist(x, y)\leq \delta^{-1}\\\theta_x,\theta_y\leq R}} h(\dist(x, y))\rho_k(x)\rho_k(y)\d x\d y+\iint_{\substack{\delta\leq \dist(x, y)\leq \delta^{-1}\\\theta_x> R}} h(\dist(x, y))\rho_k(x)\rho_k(y)\d x\d y\\
&\iint_{\substack{\delta\leq \dist(x, y)\leq \delta^{-1}\\\theta_y> R}} h(\dist(x, y))\rho_k(x)\rho_k(y)\d x\d y-\iint_{\substack{\delta\leq \dist(x, y)\leq \delta^{-1}\\\theta_x,\theta_y> R}} h(\dist(x, y))\rho_k(x)\rho_k(y)\d x\d y\\
&=\iint_{\substack{\delta\leq \dist(x, y)\leq \delta^{-1}\\\theta_x,\theta_y\leq R}} h(\dist(x, y))\rho_k(x)\rho_k(y)\d x\d y+2\iint_{\substack{\delta\leq \dist(x, y)\leq \delta^{-1}\\\theta_x> R}} h(\dist(x, y))\rho_k(x)\rho_k(y)\d x\d y\\
&-\iint_{\substack{\delta\leq \dist(x, y)\leq \delta^{-1}\\\theta_x,\theta_y> R}} h(\dist(x, y))\rho_k(x)\rho_k(y)\d x\d y\\
&=:\mathcal{I}_{31}+2\mathcal{I}_{32}-\mathcal{I}_{33}.
\end{align*}

To estimate $\mathcal{I}_{32}$, note that
\begin{align}
\iint_{\substack{\delta\leq \dist(x, y)\leq \delta^{-1}\\\theta_x> R}} \rho_k(y)\d x\d y&\leq 
\iint_{\dist(x, y)\leq \delta^{-1}}\rho_k(y)\d x\d y =\int_{\bbh^\dm} \rho_k(y)|B_{\delta^{-1}}(y)|\d y \nonumber \\
&=\int_{\bbh^\dm} \rho_k(y)|B_{\delta^{-1}}(\p)|\d y= |B_{\delta^{-1}}(\p)|, \label{est:I32-int}
\end{align}
where we integrated in $x$ for the first equal sign, and for the second equal sign we used that on $\bbh^\dm$ all geodesic balls of a certain radius have the same volume.
Then, by the monotonicity of $h$, along with \eqref{eqn:rho-BRoc} and \eqref{est:I32-int}, we get
\begin{align}
|\mathcal{I}_{32}|&\leq \|\rho_k\|_{L^\infty(B_R(\p)^c)}\max \left(|h(\delta)|, |h(\delta^{-1})|\right)\iint_{\substack{\delta\leq \dist(x, y)\leq \delta^{-1}\\\theta_x> R}} \rho_k(y)\d x\d y \nonumber \\
&\leq|B_R(\p)|^{-1}\max\left(|h(\delta)|, |h(\delta^{-1})|\right)|B_{\delta^{-1}}(\p)|. \label{est:I32}
\end{align}

For $\mathcal{I}_{33}$, one can proceed exactly as in \eqref{est:I32-int} and write
\begin{align*}
\iint_{\substack{\delta\leq \dist(x, y)\leq \delta^{-1}\\\theta_x,\theta_y> R}} \rho_k(y)\d x\d y&\leq 
\iint_{\dist(x, y)\leq \delta^{-1}}\rho_k(y)\d x\d y =\int_M \rho_k(y)|B_{\delta^{-1}}(y)|\d y \\
&=\int_M \rho_k(y)|B_{\delta^{-1}}(\p)|\d y= |B_{\delta^{-1}}(\p)|, 
\end{align*}
This estimate, together with the monotonicity of $h$ and \eqref{eqn:rho-BRoc}, lead to
\begin{align}
|\mathcal{I}_{33}|&\leq \|\rho_k\|_{L^\infty(B_R(\p)^c)}\max \left(|h(\delta)|, |h(\delta^{-1})|\right)\iint_{\substack{\delta\leq \dist(x, y)\leq \delta^{-1}\\\theta_x,\theta_y> R}} \rho_k(y)\d x\d y \nonumber \\
&\leq|B_R(\p)|^{-1}\max \left(|h(\delta)|, |h(\delta^{-1})|\right)|B_{\delta^{-1}}(\p)|. \label{est:I33}
\end{align}
Then, combining \eqref{est:I32} and \eqref{est:I33} we find
\begin{align}
\begin{aligned}\label{estI3}
\mathcal{I}_3&\geq\mathcal{I}_{31}-2|\mathcal{I}_{32}|-|\mathcal{I}_{33}| \\
&\geq\mathcal{I}_{31}-3|B_R(\p)|^{-1}\max \left(|h(\delta)|, |h(\delta^{-1})|\right)|B_{\delta^{-1}}(\p)|.
\end{aligned}
\end{align}
Note that \eqref{estI3} holds for an arbitrary $k\geq1$, with $R>0$ and $\delta>0$ fixed, independent of $k$.

Overall, from \eqref{estI1}, \eqref{estI2}, and \eqref{estI3}, we can conclude that
\begin{equation}
\label{est:int-energy}
\begin{aligned}
&\iint_{\bbh^\dm \times \bbh^\dm} h(\dist(x, y))\rho_k(x)\rho_k(y)\d x\d y\\
&\quad \geq \iint_{\substack{\delta\leq \dist(x, y)\leq \delta^{-1}\\\theta_x,\theta_y\leq R}} h(\dist(x, y))\rho_k(x)\rho_k(y)\d x\d y-2\epsilon-3|B_R(\p)|^{-1}\max \left(|h(\delta)|, |h(\delta^{-1})|\right)|B_{\delta^{-1}}(\p)|.
\end{aligned}
\end{equation}
In \eqref{est:int-energy}, $\epsilon>0$ and $R>0$ are fixed, independent of $k$, and $0<\delta \leq \min(\delta_1,\delta_2)$ with $\delta_1$ and $\delta_2$ depending on $\epsilon$ but not on $k$.

Since $\{(x, y)\in \bbh^\dm \times \bbh^\dm: \delta\leq \dist(x, y)\leq \delta^{-1}, \theta_x\leq R, \theta_y\leq R\}$ is a compact set in $\bbh^\dm \times \bbh^\dm$, we can send $k\to\infty$ in \eqref{est:int-energy} to get
\begin{align*}
&\liminf_{k\to\infty}\iint_{\bbh^\dm \times \bbh^\dm} h(\dist(x, y))\rho_k(x)\rho_k(y)\d x\d y\\
& \geq \liminf_{k\to\infty}\iint_{\substack{\delta\leq \dist(x, y)\leq \delta^{-1}\\\theta_x,\theta_y\leq R}} h(\dist(x, y))\rho_k(x)\rho_k(y)\d x\d y-2\epsilon-3|B_R(\p)|^{-1}\max(|h(\delta)|, |h(\delta^{-1})|)|B_{\delta^{-1}}(\p)|\\
&=\iint_{\substack{\delta\leq \dist(x, y)\leq \delta^{-1}\\\theta_x,\theta_y\leq R}} h(\dist(x, y))\rho_0(x)\rho_0(y)\d x\d y-2\epsilon-3|B_R(\p)|^{-1}\max(|h(\delta)|, |h(\delta^{-1})|)|B_{\delta^{-1}}(\p)|.
\end{align*}
By taking the limits $R\to\infty$ first and then $\delta\to0+$ in the above, we get 
\[
\liminf_{k\to\infty}\iint_{\bbh^\dm \times \bbh^\dm} h(\dist(x, y))\rho_k(x)\rho_k(y)\d x\d y\geq \iint_{\bbh^\dm\times \bbh^\dm} h(\dist(x, y))\rho_0(x)\rho_0(y)\d x\d y-2\epsilon.
\]
Since $\epsilon>0$ was arbitrary, we get the lower-semicontinuity of the interaction energy:
\begin{equation}
\label{eqn:lsc-int}
\liminf_{k\to\infty}\iint_{\bbh^\dm\times \bbh^\dm} h(\dist(x, y))\rho_k(x)\rho_k(y)\d x\d y\geq \iint_{\bbh^\dm\times \bbh^\dm} h(\dist(x, y))\rho_0(x)\rho_0(y)\d x\d y.
\end{equation}

The conclusion of the theorem now follows from \eqref{eqn:lsc-entropy} and \eqref{eqn:lsc-int}.
\end{proof}


\section{Existence of global minimizers on $\bbh^\dm$: Case $\h_\infty = \infty$}
\label{sect:existence-inf}
\setcounter{equation}{0}

In this section, we will show the existence of global energy minimizers when the interaction potential is not too singular at origin (see \eqref{eqn:h-sing}) and it grows to infinity ($h_\infty = \infty$). In addition, we show in this case that the global minimizers are compactly supported. 

\begin{theorem}[Existence of global minimizers on $\bbh^\dm$: case $\h_\infty = \infty$]
\label{thm:exist-inf}
Consider the energy functional \eqref{eqn:energy} on $\bbh^\dm$, with $m>1$. Assume the interaction potential satisfies assumption \textbf{(H)}, for a function $h$ that satisfies \eqref{eqn:h-sing} and $\lim_{\theta \to \infty} h(\theta)=\infty$. Then the energy functional $E[\cdot]$ has a global minimizer in $\calP(\bbh^\dm)$.
\end{theorem}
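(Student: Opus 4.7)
The plan is to invoke the direct method of the calculus of variations, with the competition between mass escaping to infinity (penalized by $\h_\infty=\infty$) and the rearrangement structure of the problem playing the role of a concentration-compactness argument. Let $\{\tilde{\rho}_k\}_{k\geq 1}$ be a minimizing sequence for $E$ in $\calP(\bbh^\dm)$. By Remark \ref{rmk:min-space} and Proposition \ref{spresp}, we may assume $\tilde{\rho}_k \in L^m(\bbh^\dm)\cap\calP(\bbh^\dm)$. The Riesz rearrangement inequality on $\bbh^\dm$ (Theorem \ref{RIH}), together with the $L^m$-invariance of symmetric decreasing rearrangement, shows that $\rho_k := \tilde{\rho}_k^*$ satisfies $E[\rho_k]\leq E[\tilde{\rho}_k]$, so we may assume $\rho_k \in L^m(\bbh^\dm)\cap\calP_\p^{sd}(\bbh^\dm)$. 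The weak-$^*$ compactness result of Theorem \ref{CWT} then extracts a (non-relabelled) subsequence and some $\rho_0\in\calM_+(\bbh^\dm)$ with $\int_{\bbh^\dm}\rho_0\leq 1$ and $\rho_k\rightharpoonup \rho_0$.

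The heart of the argument is proving tightness, i.e.\ $\int_{\bbh^\dm}\rho_0 = 1$, and this is exactly where $\h_\infty=\infty$ enters decisively. Applying Theorem \ref{RIH} with $\rho_k$, $g(\theta)=\chi_{[0,R]}(\theta)$, and a Dirac approximation of a point $x$, one obtains the comparison $\int_{B_R(x)}\rho_k(y)\,\d y \leq \int_{B_R(\p)}\rho_k(y)\,\d y$ for every $x\in \bbh^\dm$. Integrating this against $\rho_k(x)\,\d x$ yields the key inequality
\[
\iint_{\dist(x,y)\geq R}\rho_k(x)\rho_k(y)\,\d x\,\d y \;\geq\; 1 - \rho_k(B_R(\p)).
\]
If mass escaped to infinity, i.e.\ $\int_{\bbh^\dm}\rho_0<1$, a Portmanteau-type approximation (sandwiching $\chi_{\overline{B_R(\p)}}$ by truncations in $C_0(\bbh^\dm)$) would produce $\delta>0$ such that $\rho_k(B_R(\p))\leq 1-\delta$ for all sufficiently large $R$ and $k$, so that $\iint_{\dist\geq R}\rho_k\rho_k\geq \delta$. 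Splitting the interaction integral at $\dist(x,y)=R$ and using Lemma \ref{esth} together with Proposition \ref{PUB} to bound $\iint_{\dist<R} h(\dist)\rho_k\rho_k \geq -C$ uniformly in $k$, one finds
\[
E[\rho_k] \;\geq\; \frac{1}{m-1}\int_{\bbh^\dm}\rho_k^m\,\d x + \frac{1}{2}h(R)\,\delta - \frac{C}{2}.
\]
Since $h(R)\to\infty$ as $R\to\infty$, letting $R$ be arbitrarily large contradicts $E[\rho_k]\to\inf E<\infty$ (finiteness of $\inf E$ follows by testing $E$ on any compactly supported $L^m$ probability density, for which both the entropy and the interaction energy are finite under assumption \textbf{(H)}).

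Once tightness is established, $\rho_0\in\calP(\bbh^\dm)$, and since radial symmetry and radial monotonicity pass to weak-$^*$ limits, $\rho_0\in\calP_\p^{sd}(\bbh^\dm)$. The lower semicontinuity result of Proposition \ref{prop:lsc} then yields
\[
E[\rho_0] \;\leq\; \liminf_{k\to\infty}E[\rho_k] \;=\; \inf_{\calP(\bbh^\dm)}E,
\]
so $\rho_0$ is a global minimizer. Proposition \ref{spresp} then automatically places $\rho_0$ in $L^m(\bbh^\dm)\cap\calP(\bbh^\dm)$, since otherwise $E[\rho_0]=\infty$.

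The main obstacle will be the tightness step, because without $\h_\infty=\infty$ mass could slip off to infinity while keeping the interaction energy finite, so the whole argument hinges on producing a quantitative lower bound on the mass assigned to pairs $(x,y)$ with large $\dist(x,y)$. The radial-decreasing symmetry of $\rho_k$, combined with the Riesz-based comparison $\int_{B_R(x)}\rho_k\leq \int_{B_R(\p)}\rho_k$, is precisely what converts loss of compactness into separation of mass and makes this bound available.
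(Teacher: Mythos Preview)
Your argument is correct and differs from the paper's in the tightness step. The paper bounds the interaction energy over $\{\dist(x,y)\geq\delta\}$ uniformly in $k$ and then, via an angular decomposition $\{\angle(x\p y)\geq\pi/2\}$ together with the Rauch comparison theorem (to show $\{\theta_x,\theta_y\geq\delta,\ \angle(x\p y)\geq\pi/2\}\subset\{\dist(x,y)\geq\delta\}$) and the radial symmetry of $\rho_k$, converts this into the direct tail estimate
\[
\int_{\theta_x\geq\delta}\rho_k(x)\,\d x \leq \left(\frac{2V}{h(\delta)}\right)^{1/2},
\]
which establishes tightness a priori and allows Prokhorov's theorem to be invoked immediately. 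Your approach instead extracts a weak-$^*$ limit first (Theorem~\ref{CWT}) and uses the Hardy--Littlewood inequality $\int_{B_R(x)}\rho_k \leq \int_{B_R(\p)}\rho_k$ (which you obtain from Theorem~\ref{RIH} via a Dirac approximation, though the layer-cake proof would be more direct) to get $\iint_{\dist\geq R}\rho_k\rho_k \geq 1-\rho_k(B_R(\p))$, and then argues by contradiction. Your route avoids the angular and Rauch geometry entirely, relying only on the rearrangement structure and the homogeneity of $\bbh^\dm$, so it is somewhat more elementary and would transplant to other homogeneous spaces admitting rearrangements; the paper's route has the advantage of yielding an explicit, quantitative tightness bound without first passing to the limit.

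One point of care in your write-up: the phrase ``for all sufficiently large $R$ and $k$'' is ambiguous as stated, since for fixed $k$ one has $\rho_k(B_R(\p))\to 1$ as $R\to\infty$. What you actually need (and what the weak-$^*$ convergence gives) is that for each fixed $R$ there exists $k_0(R)$ with $\rho_k(B_R(\p))\leq 1-\delta$ for all $k\geq k_0(R)$; you then pass to $\liminf_{k\to\infty}$ in the energy lower bound $E[\rho_k]\geq \tfrac12 h(R)\delta - \tfrac{C}{2}$ before sending $R\to\infty$. With that order of limits made explicit, the argument is complete.
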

\begin{proof}
Let  $\p$ be an arbitrary pole in $\bbh^\dm$ and $\{\rho_k\}_{k \geq 1}\subset  L^m(\bbh^\dm) \cap \mathcal{P}_\p^{sd}(\bbh^\dm)$ be a minimizing sequence of $E[\rho]$. From Proposition \ref{PUB}, we know that $\iint_{\bbh^\dm\times \bbh^\dm}h(\dist(x, y))\rho_k(x)\rho_k(y)\d x\d y$ is uniformly bounded (above and below) and also, that \eqref{eqn:int-unif} holds uniformly in $k\geq 1$. Therefore, we can fix $\delta_1>0$ such that
\[
\Biggl|\iint_{\dist(x, y)<\delta_1}h(\dist(x, y))\rho_k(x)\rho_k(y)\d x\d y\Biggr|<1, \qquad \text{ for all } k\geq 1.
\]
This implies that for any $\delta >\delta_1$,
\begin{align*}
&\iint_{\dist(x, y)<\delta}h(\dist(x, y))\rho_k(x)\rho_k(y)\d x\d y\\
&\quad = \iint_{\dist(x, y)<\delta_1}h(\dist(x, y))\rho_k(x)\rho_k(y)\d x\d y+\iint_{\delta_1\leq\dist(x, y)<\delta}h(\dist(x, y))\rho_k(x)\rho_k(y)\d x\d y\\
&\quad \geq  -1+\min(h(\delta_1), 0),
\end{align*}
for all $k \geq 1$. Furthermore, for any $\delta>\delta_1$, we then have
\begin{equation}
\label{eqn:dgdelta-1}
\begin{aligned}
&\iint_{\dist(x, y) \geq \delta}h(\dist(x, y))\rho_k(x)\rho_k(y)\d x\d y \\
&\quad =\iint_{\bbh^\dm\times \bbh^\dm}h(\dist(x, y))\rho_k(x)\rho_k(y)\d x\d y-\iint_{\dist(x, y)<\delta}h(\dist(x, y))\rho_k(x)\rho_k(y)\d x\d y\\
&\quad \leq \iint_{\bbh^\dm\times \bbh^\dm}h(\dist(x, y))\rho_k(x)\rho_k(y)\d x\d y+1-\min(h(\delta_1), 0),
\end{aligned}
\end{equation}
for all $k\geq 1$. 

Define
\[
V:=\sup_{k\in\mathbb{N}}\Biggl(\, \iint_{\bbh^\dm\times \bbh^\dm}h(\dist(x, y))\rho_k(x)\rho_k(y)\d x\d y \Biggr) +1-\min(h(\delta_1), 0).
\]
Recall that $\delta_1$ is a fixed number, so $V$ is constant. Since $h_\infty=\infty$, there exists $\delta_2>0$ such that $h(\delta)>0$ for all $\delta>\delta_2$. Then, for any $\delta>\delta_2$, we have 
\begin{equation}
\label{eqn:dgdelta-2}
\iint_{\dist(x, y)\geq \delta}h(\dist(x, y))\rho_k(x)\rho_k(y)\d x\d y\geq \iint_{\substack{\dist(x, y)\geq \delta\\ \angle (x\p y)\geq \pi/2}}h(\dist(x, y))\rho_k(x)\rho_k(y)\d x\d y,
\end{equation}
for all $k \geq 1$. By combining \eqref{eqn:dgdelta-1} and \eqref{eqn:dgdelta-2}, we infer that for any $\delta>\max(\delta_1, \delta_2)$, we have
\begin{equation}
\label{eqn:V-ineq1}
V\geq \iint_{\substack{\dist(x, y)\geq \delta\\ \angle (x\p y)\geq \pi/2}}h(\dist(x, y))\rho_k(x)\rho_k(y)\d x\d y, \qquad \text{ for all } k\geq 1. 
\end{equation}

We will use the Rauch comparison theorem -- see setup in Section \ref{sect:HLS} leading to \eqref{eqn:dist-ineq}. First, by the cosine law in the Euclidean space $T_\p \bbh^\dm$ we have
\[
|\log_\p x - \log_\p y|^2 = \theta_x^2+\theta_y^2-2\theta_x\theta_y\cos\angle(x\p y), \qquad \text{ for all } x,y \in \bbh^\dm.
\]
By combining the above with \eqref{eqn:dist-ineq} we then get
\begin{align*}
\dist(x, y)^2 &\geq \theta_x^2+\theta_y^2-2\theta_x\theta_y\cos\angle(x\p y) \\[2pt]
&\geq \theta_x^2+\theta_y^2,
\end{align*}
for all points $x,y$ such that $\angle (x\p y)\geq \pi/2$. Furthermore, if $x$ and $y$ satisfy $\theta_x, \theta_y\geq \delta$ and $\angle(x\p y)\geq \pi/2$, for some $\delta>0$, then
\[
\dist(x, y)\geq\sqrt{2}\delta>\delta.
\]
It implies
\begin{multline*}
\{(x, y)\in \bbh^\dm\times \bbh^\dm: \dist(x, y)\geq \delta,\angle(x\p y)\geq \pi/2\}\supset \\ \{(x, y)\in \bbh^\dm\times \bbh^\dm: \theta_x\geq \delta, \theta_y\geq \delta,\angle(x\p y)\geq \pi/2\}.
\end{multline*}

Using these considerations in \eqref{eqn:V-ineq1}, we then find for any $\delta>\max(\delta_1, \delta_2)$,
\begin{align}
\begin{aligned}\label{eqn:U-est1}
V
&\geq \iint_{\substack{\theta_x, \theta_y\geq \delta\\\angle (x\p y)\geq \pi/2}}h(\dist(x, y))\rho_k(x)\rho_k(y)\d x\d y \\
&\geq  \iint_{\substack{\theta_x, \theta_y\geq \delta\\\angle (x\p y)\geq \pi/2}}h(\delta)\rho_k(x)\rho_k(y)\d x\d y,
\end{aligned}
\end{align}
for all $k \geq 1$. Now, from the radial symmetry of $\rho_k$ with respect to $\p$, for any fixed $x\neq \p$, we have
\[
\int_{\substack{\theta_y\geq \delta\\\angle(x\p y)\geq \pi/2}}\rho_k(y)\d y=\frac{1}{2}\int_{\theta_y\geq\delta}\rho_k(y)\d y.
\]
Hence, we get for any $\delta>\max(\delta_1, \delta_2)$,
\begin{align*}
\iint_{\substack{\theta_x, \theta_y\geq \delta\\\angle (x\p y)\geq \pi/2}}h(\delta)\rho_k(x)\rho_k(y)\d x\d y&=\frac{h(\delta)}{2}\iint_{\theta_x,\theta_y\geq\delta}\rho_k(x)\rho_k(y)\d x\d y \\
&=\frac{h(\delta)}{2}\left(\int_{\theta_x\geq \delta}\rho_k(x)\d x\right)^2,
\end{align*}
which used in \eqref{eqn:U-est1} yields
\begin{equation}
\label{eqn:V-ineq2}
V\geq\frac{h(\delta)}{2}\left(\int_{\theta_x\geq \delta}\rho_k(x)\d x\right)^2, \qquad \text{ for all } k \geq 1.
\end{equation}

Write \eqref{eqn:V-ineq2} as
\[
\int_{\theta_x\geq \delta}\rho_k(x)\d x\leq\left(\frac{2V}{h(\delta)}\right)^{1/2},\qquad\text{ for all } \delta>\max(\delta_1, \delta_2) \text{ and } k\geq 1.
\]
Finally, use $\lim_{\delta\to\infty}h(\delta)=\infty$ to get
\[
\lim_{\delta\to\infty}\int_{\theta_x\geq \delta}\rho_k(x)\d x=0, \qquad \text{ uniformly in } k \geq 1.
\]
We infer from here that $\rho_k$ is tight. Then, from Prokhorov's theorem, there exists a subsequence $\{ \rho_{k_l}\}$ of $\{\rho_k\}$ such that $\{ \rho_{k_l}\}$ converges weakly-$^*$ to $\rho_0\in\mathcal{P}(\bbh^\dm)$ as $l\to\infty$. Define
\[
E_{\inf}:=\inf_{\rho\in L^m(\bbh^\dm) \cap \calP(\bbh^\dm)}E[\rho].
\]
From the definition of the minimizing sequence and the weak lower semicontinuity of the energy functional (see Proposition \ref{prop:lsc}), we have
\begin{equation}
\label{eqn:Erhol}
E[\rho_0] \leq \liminf_{l\to\infty}E[\rho_{k_l}] = E_{\inf}.
\end{equation}
If $\rho_0\not\in L^m(\bbh^\dm)$, then by Proposition \ref{spresp} $E[\rho_0]=\infty$, and the above inequality cannot hold. Hence, $\rho_0\in  L^m(\bbh^\dm) \cap \mathcal{P}(\bbh^\dm)$, which yields
\begin{equation}
\label{eqn:Erhor}
E_{\inf}\leq E[\rho_0].
\end{equation}
From \eqref{eqn:Erhol} and \eqref{eqn:Erhor} we conclude that $E[\rho_0]=E_{\inf}$ and hence $\rho_0$ is a global energy minimizer over  $\mathcal{P}(\bbh^\dm)$. 
\end{proof}

\begin{remark}
\label{rmk:blow-up}
Theorem \ref{thm:exist-inf} considers interaction potentials that are less singular than $-\frac{1}{\theta^{\min(\dm(m-1),\dm)}}$ at origin. This is consistent with Propositions \ref{prop:wd} and \ref{spresp}, where the energy is shown to be well-defined for such interaction potentials. Also, by the general non-existence result from Proposition \ref{prop:nonexist}, we infer that the existence result in Theorem \ref{thm:exist-inf} is sharp.
\end{remark}

We will now investigate the support of the global minimizers. By the considerations in Section \ref{subsec:rearrange}, we can choose a radially symmetric and decreasing global minimizer $\rho\in\mathcal{P}_\p^{sd}(\bbh^\dm)$, for a pole $\p \in \bbh^\dm$ fixed. Then,  the support of $\rho$ should be either a closed ball with radius $R<\infty$ or the entire $\bbh^\dm$. We will show that the latter case is not possible.

\begin{proposition}
\label{prop:comp-support} Assume the interaction potential satisfies the assumptions in Theorem \ref{thm:exist-inf} and consider a global energy minimizer $\rho\in\mathcal{P}_\p^{sd}(\bbh^\dm)$, for a pole $\p \in \bbh^\dm$ fixed. Then, the support of $\rho$ is a compact set.
\end{proposition}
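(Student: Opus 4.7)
The plan is to argue by contradiction: since $\rho \in \mathcal{P}_\p^{sd}(\bbh^\dm)$ is radially symmetric and non-increasing in $\theta_x$, its support is necessarily a closed ball $\overline{B_R(\p)}$ with $R \in (0,\infty]$, and non-compactness amounts to $R = \infty$ (so that $\rho > 0$ a.e.\ on $\bbh^\dm$). I will assume this and derive a contradiction by examining the Euler--Lagrange condition at infinity.

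First, I would establish the Euler--Lagrange condition at the minimizer. Given two points $x_1, x_2$ in the interior of the support with $\rho(x_i) > 0$ (a.e.) and taking test functions of the form $\phi = |B_\delta(x_1)|^{-1}\chi_{B_\delta(x_1)} - |B_\delta(x_2)|^{-1}\chi_{B_\delta(x_2)}$, the perturbations $\rho + t\phi$ remain admissible for small $t$, and the vanishing of the first variation of $E$, together with a Lebesgue differentiation argument as $\delta \to 0$, yields the existence of $\lambda \in \bbr$ such that
\[
\frac{m}{m-1}\rho(x)^{m-1} + W*\rho(x) = \lambda, \qquad \text{a.e.\ on } \operatorname{supp}(\rho) = \bbh^\dm.
\]
Next, the radial-monotone bound $\rho(x) \leq |B_{\theta_x}(\p)|^{-1}$ used in \eqref{eqn:rho-BRoc} forces $\rho(x) \to 0$ as $\theta_x \to \infty$, so the first term in the Euler--Lagrange relation vanishes at infinity.

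The heart of the argument is to show $W*\rho(x) \to \infty$ as $\theta_x \to \infty$, which contradicts the identity above. Choose $R_0 > 0$ large enough that $M := \int_{B_{R_0}(\p)} \rho(y)\,\d y > 0$, and split
\[
W*\rho(x) = \int_{B_{R_0}(\p)} h(\dist(x,y))\rho(y)\,\d y + \int_{B_{R_0}(\p)^c} h(\dist(x,y))\rho(y)\,\d y.
\]
For $y \in B_{R_0}(\p)$ and $\theta_x > R_0$, the triangle inequality gives $\dist(x,y) \geq \theta_x - R_0$ and monotonicity of $h$ gives $h(\dist(x,y)) \geq h(\theta_x - R_0)$, so the first integral is at least $M \cdot h(\theta_x - R_0)$, which tends to $+\infty$ since $h_\infty = \infty$. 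For the second integral, I apply Lemma \ref{esth} to get $h(\theta) \geq -\gamma_1 \theta^{-\alpha} - \gamma_2$ and reduce to a uniform-in-$x$ bound on $\int \dist(x,y)^{-\alpha}\rho(y)\,\d y$: splitting this integral according to whether $y \in B_1(x)$ or not, the exterior piece is bounded by $\int \rho \leq 1$, while on $B_1(x)$ (for $\theta_x > 1$) the radial bound $\rho(y) \leq |B_{\theta_x - 1}(\p)|^{-1}$ and the homogeneity of $\bbh^\dm$ give an estimate $\leq |B_{\theta_x-1}(\p)|^{-1}\int_{B_1(\p)}\theta_z^{-\alpha}\,\d z$, finite since $\alpha < \dm$. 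Combining, $W*\rho(x) \geq M\,h(\theta_x - R_0) - (\gamma_1 C + \gamma_2)$, so $W*\rho(x) \to \infty$, contradicting that the sum $\frac{m}{m-1}\rho^{m-1}(x) + W*\rho(x)$ equals the constant $\lambda$ a.e.

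The main obstacle I anticipate is the justification of the Euler--Lagrange condition with the precise regularity available, particularly verifying that the variation of the entropy term is correctly identified (the convexity of $s \mapsto s^m$ allows the standard argument, but one should confirm $\rho^{m-1} \in L^1_{\mathrm{loc}}$ against the test functions used). The convolution term $W*\rho$ is finite a.e.\ by the uniform bounds established in Proposition \ref{PUB} applied to the constant sequence $\{\rho\}$, so no additional difficulty arises there.
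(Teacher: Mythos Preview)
Your proposal is correct and follows essentially the same strategy as the paper: assume $\operatorname{supp}(\rho)=\bbh^\dm$, invoke the Euler--Lagrange identity $\frac{m}{m-1}\rho^{m-1}+W*\rho=\text{const}$ on the support, and derive a contradiction by showing $W*\rho(x)\to\infty$ as $\theta_x\to\infty$ while $\rho(x)^{m-1}\to0$. The only notable difference is the decomposition used to control $W*\rho(x)$: the paper splits according to $\dist(x,y)<s$ versus $\dist(x,y)\geq s$ (with $s=\inf\{\theta:h(\theta)\geq0\}$), handling the singular short-range piece via Lemma~\ref{esth} and the radial bound $\rho\leq|B_{\theta_x-s}(\p)|^{-1}$, and then extracting growth from $\int_{B_R(\p)}h(\dist(x,y))\rho(y)\,\d y\geq \tfrac12 h(\theta_x-R)$; you instead split according to $y\in B_{R_0}(\p)$ versus $y\notin B_{R_0}(\p)$, pushing the singular contribution into the exterior piece and bounding it by a near/far split around $x$. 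Both decompositions rely on the same ingredients (monotonicity of $h$, Lemma~\ref{esth}, the radial bound \eqref{eqn:rho-BRoc}, and integrability of $\theta^{-\alpha}$ near the origin since $\alpha<\dm$), and neither is materially simpler than the other.
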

\begin{proof}
Since $\rho$ is a global minimizer, in particular it is a critical point of the energy and satisfies the Euler-Lagrange equation \cite{CaDePa2019,Kaib17}:
\begin{equation}
\label{eqn:EL}
\frac{m}{m-1}\rho(x)^{m-1}+\int_{\bbh^\dm} h(\dist(x, y))\rho(y)\d y=C,\qquad \forall x\in\mathrm{supp}(\rho).
\end{equation}

Let assume that the support of $\rho$ is the entire $\bbh^\dm$, so \eqref{eqn:EL} holds for all $x \in \bbh^\dm$. Since $h$ is non-decreasing and $h_\theta=\infty$, there exists $s\geq0$ such that
\[
s:=\inf\{\theta:h(\theta)\geq 0\}.
\]
If $s=0$, then $h(\theta)\geq0$ for all $\theta>0$. If $s>0$ then $h(\theta)<0$ for $0<\theta< s$, and $h(\theta)\geq0$ for $\theta \geq s$. Also, take $R>0$ fixed such that 
\begin{equation}
\label{eqn:R-defn}
\int_{\theta_y\geq R}\rho(y) \d y \geq \frac{1}{2}.
\end{equation}
For $x\in \mathrm{supp}(\rho) = \bbh^\dm$ fixed, but arbitrary, that satisfies $\theta_x\geq s+R$, write
\begin{equation}
\label{eqn:decomp}
\int_{\bbh^\dm} h(\dist(x, y))\rho(y)\d y=\int_{\dist(x, y) < s} h(\dist(x, y))\rho(y)\d y+\int_{\dist(x, y)\geq s} h(\dist(x, y))\rho(y)\d y.
\end{equation}

We will inspect separately the two integrals in the r.h.s. of \eqref{eqn:decomp}. For the first integral, note that for $y \in \{y: \dist(x,y)<s \}$, we have
\[
\theta_y\geq \theta_x-\dist(x, y)\geq s+R-s=R,
\]
which implies
\begin{equation*}
\{y: \dist(x,y) < s\}\subset B_R(\p)^c.
\end{equation*}
We then infer
\begin{align}
\|\rho\|_{L^\infty(\{y:\dist(x, y)\leq s\})}&\leq \|\rho\|_{L^\infty(B_R(\p)^c)} \nonumber \\[2pt]
& \leq |B_R(\p)|^{-1}, \label{eqn:rho-inf}
\end{align}
where for the second inequality sign we used that $\rho$ is radially symmetric and decreasing (see \eqref{eqn:rho-BRoc} and the argument leading to it).

From Lemma \ref{esth}, there exist $\gamma_1, \gamma_2\geq 0$ such that
\begin{equation}
h(\theta)\geq -\gamma_1\theta^{-\alpha}-\gamma_2,\qquad\forall \theta>0.
\end{equation}
If we set $\beta=\gamma_1+\gamma_2s^\alpha$, then we have
\begin{align}
\label{eqn:h-beta-lb}
h(\theta)\geq -\beta\theta^{-\alpha},\qquad\forall \theta\in(0, s].
\end{align}

Using \eqref{eqn:h-beta-lb} followed by \eqref{eqn:rho-inf} we then find
\begin{align}
\int_{\dist(x, y) < s} h(\dist(x, y))\rho(y)\d y &\geq-\beta\int_{\dist(x, y) < s}\frac{\rho(y)\d y}{\dist(x, y)^\alpha} \nonumber \\
&\geq-\beta|B_R(\p)|^{-1}\int_{\dist(x, y)\leq s}\frac{\d y}{\dist(x, y)^\alpha} \nonumber \\
& = - \beta|B_R(\p)|^{-1}\int_0^s\frac{|B_\theta(\p)|}{\theta^\alpha}\d\theta. \label{eqn:ineq1}
\end{align}
Since $\lim_{\theta\to0+}\frac{|B_\theta(\p)|}{\theta^\dm}=w(\dm)$ and $\alpha<\dm$, the integral in the r.h.s. above is convergent, and we can define $U>0$ as 
\begin{equation}
\label{eqn:Un}
U:=\beta|B_R(\p)|^{-1}\int_0^s\frac{|B_\theta(\p)|}{\theta^\alpha}d\theta.
\end{equation}

For the second integral in the r.h.s. of  \eqref{eqn:decomp}, note that if $y$ is such that $\theta_y\leq R$, then $\dist(x, y)\geq \theta_x-\theta_y\geq s+R-R=s$. This implies
\[
\{y: \theta_y\leq R\}\subset \{y:\dist(x, y)\geq s\}.
\]
and hence,
\begin{equation}
\label{eqn:ineq2}
\int_{\dist(x, y)\geq s} h(\dist(x, y))\rho(y)\d y\geq \int_{B_{R}(\p)}h(\dist(x, y))\rho(y)\d y.
\end{equation}

Finally, by using \eqref{eqn:decomp} in \eqref{eqn:EL}, along with \eqref{eqn:ineq1} and \eqref{eqn:ineq2}, we get
\begin{align*}
C&=\frac{m}{m-1}\rho(x)^{m-1}+\int_{\dist(x, y)< s} h(\dist(x, y))\rho(y)\d y+\int_{\dist(x, y)\geq s} h(\dist(x, y))\rho(y)\d y\\
&\geq0-U+\int_{B_{R}(\p)}h(\dist(x, y))\rho(y)\d y,
\end{align*}
for all $x\in \bbh^\dm$, with $\theta_x\geq s+R$. Hence, for all such $x$ we get
\begin{align*}
C+U &\geq \int_{B_{R}(\p)}h(\dist(x, y))\rho(y)\d y \\
& \geq \int_{B_{R}(\p)}h(\theta_x-R)\rho(y)\d y \\
& =h(\theta_x-R) \int_{B_{R}(\p)}\rho(y)\d y, \\
\end{align*}
where for the second inequality sign we used that $\dist(x, y)\geq \theta_x-\theta_y\geq \theta_x-R$ for $\theta_y \leq R$, and that by the monotonicity of $h$ and the definition of $s$, we have $h(\theta_x-R)\geq h(s)\geq0$. Now use how $R$ was defined (see \eqref{eqn:R-defn}) to conclude
\[
C+U \geq \frac{1}{2} h(\theta_x-R),
\]
for any $x \in \bbh^\dm$ which satisfies $\theta_x\geq R+s$. Since $x$ is arbitrary, we then find
\[
C+U\geq \frac{1}{2}\lim_{\theta_x\to\infty}h(\theta_x-R)=\infty
\] 
which yields a contradiction. The only remaining possibility is that the support of $\rho$ is a closed ball centred of $\p$ with radius $R<\infty$.
\end{proof}

\section{Existence of global minimizers on $\bbh^\dm$: Case $h_\infty=0$}
\label{sect:existence-0}
\setcounter{equation}{0}

In this section, we show the existence of global energy minimizers on $\bbh^\dm$ for the case $h_\infty=0$. The methods used to prove Theorem \ref{thm:exist-inf} do not work in this case, as the attraction is not strong enough at infinity to ensure tightness of the minimizing sequences. Consequently, the arguments in this section and more delicate and involved.

\begin{proposition}
\label{prop:E-neg-mg2}
Consider the energy functional \eqref{eqn:energy} on $\bbh^\dm$, with $m>2$. Assume the interaction potential satisfies assumption \textbf{(H)}, for a function $h$ such that $\lim_{\theta \to \infty} h(\theta) =0$.  Then, there exists $\rho\in L^m(\bbh^\dm) \cap \calP(\bbh^\dm)$ such that
\[
E[\rho]<0.
\]
\end{proposition}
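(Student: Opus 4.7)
Since $h$ is non-decreasing with $h_\infty=0$, we have $h(\theta)\leq 0$ for all $\theta\geq 0$. The claim is vacuous when $h\equiv 0$ (then $E[\rho]\geq 0$ for every $\rho$), so we may fix constants $\theta_0>0$ and $\epsilon>0$ such that $h(\theta)\leq -\epsilon$ for all $\theta\in(0,\theta_0]$; indeed, if $\theta_1$ is any point with $h(\theta_1)<0$, monotonicity gives $h(\theta)\leq h(\theta_1)$ on $(0,\theta_1]$. My plan is to test $E$ on the uniform density supported on a large geodesic ball,
\[
\rho_R \,:=\, \frac{1}{|B_R(\p)|}\chi_{B_R(\p)},\qquad R>\theta_0,
\]
and then let $R\to\infty$; the exponential volume growth of $\bbh^\dm$ together with the hypothesis $m>2$ will force the interaction term to dominate the entropy.

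\textbf{Key estimates.} A direct computation gives the entropy contribution $\frac{1}{(m-1)|B_R(\p)|^{m-1}}$. For the interaction, the geometric observation is that $B_{\theta_0}(x)\subset B_R(\p)$ whenever $\theta_x\leq R-\theta_0$; combined with $h\leq -\epsilon$ on $(0,\theta_0]$, $h\leq 0$ elsewhere, and the homogeneity of $\bbh^\dm$ (all geodesic balls of a given radius have the same volume), this yields
\[
\iint_{B_R(\p)\times B_R(\p)} h(\dist(x,y))\,\d x\,\d y \,\leq\, -\epsilon\,|B_{\theta_0}(\p)|\,|B_{R-\theta_0}(\p)|,
\]
and hence
\[
E[\rho_R] \,\leq\, \frac{1}{(m-1)\,|B_R(\p)|^{m-1}} \,-\, \frac{\epsilon\,|B_{\theta_0}(\p)|\,|B_{R-\theta_0}(\p)|}{2\,|B_R(\p)|^2}.
\]

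\textbf{Final comparison, and the main point.} Standard hyperbolic volume asymptotics (consistent with Theorem \ref{cor:AV-bounds} in the case $c_m=c_M=1$) give $|B_R(\p)|\sim C_\dm\, e^{(\dm-1)R}$ as $R\to\infty$. Substituting, the entropy term decays like $e^{-(\dm-1)(m-1)R}$, whereas the magnitude of the interaction bound decays only like $e^{-(\dm-1)R}$. Their ratio is $e^{-(\dm-1)(m-2)R}$, which tends to $0$ precisely because $m>2$, so for $R$ sufficiently large the interaction term wins and $E[\rho_R]<0$. The only real subtlety is this last scaling comparison: it is the exponential volume growth on $\bbh^\dm$ that converts a Euclidean-style dimension count into the clean threshold $m>2$, which is exactly why this simple uniform-on-a-ball ansatz fails in the regime $1<m\leq 2$ and a more concentrated construction would be required there.
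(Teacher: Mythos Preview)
Your argument is correct and takes a genuinely different route from the paper. The paper scales in the \emph{height} direction: it fixes a small ball $B_{\tilde\theta}(\p)$, considers $\rho_\alpha=\alpha\chi_{B_{\tilde\theta}(\p)}$, observes $E[\rho_\alpha]=A\alpha^m+B\alpha^2$ with $A>0$, $B<0$, and uses $m>2$ to make this negative for small $\alpha$. Since $\rho_\alpha$ is not a probability measure, it then tiles: it places $N$ disjoint translates of this bump (using homogeneity of $\bbh^\dm$), drops the negative off-diagonal interaction terms, and gets $E[\rho]\leq N E[\rho_{\alpha_*}]<0$. You instead scale in the \emph{spatial} direction, spreading $\rho_R$ over a large ball so that it is automatically a probability measure, and compare the decay rates $|B_R(\p)|^{-(m-1)}$ versus $|B_R(\p)|^{-1}$ directly. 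Your construction is more economical (one test function, no multi-bump step); the paper's is slightly more portable in that it only uses homogeneity and the ability to place arbitrarily many disjoint balls, not any quantitative volume growth.

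Two small remarks. First, your closing comment oversells the role of exponential growth: the same comparison $|B_R|^{-(m-1)}=o(|B_R|^{-1})$ goes through verbatim on $\bbr^\dm$ (or any manifold where $|B_R(\p)|\to\infty$), so the threshold $m>2$ is not a specifically hyperbolic phenomenon here. Second, ``vacuous'' is not quite the right word for the case $h\equiv 0$: the conclusion is then \emph{false}, not vacuously true. The paper's proof has the same tacit non-triviality assumption (it needs $h(2\tilde\theta)<0$), so this is not a gap relative to the paper, but you may want to phrase it as ``we may assume $h\not\equiv 0$'' rather than ``the claim is vacuous''.
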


\begin{proof}
Let $\p$ be a fixed pole in $\bbh^\dm$. Given that $h$ is non-decreasing and $h_\infty = 0$, there exists $\tilde{\theta}>0$ such that
\[
|B_{\tilde{\theta}}(\p)|\leq1\qquad\text{and}\qquad h(2\tilde{\theta})<0.
\]
Then, for any $x, y\in B_{\tilde{\theta}}(z)$ for some $z$, we have
\begin{align}\label{hdest}
h(\dist(x, y))\leq h(2\tilde{\theta}) <0.
\end{align}

For any $\alpha>0$, define $\rho_\alpha=\alpha\chi_{B_{\tilde{\theta}}(\p)}$. By a simple calculation,
\[
E[\rho_\alpha]=A \alpha^m+B \alpha^2,
\]
where
\[
A:=\frac{1}{m-1}\int_{B_{\tilde{\theta}}(\p)}1\d x>0,\quad \text{ and } \quad B:=\frac{1}{2}\iint_{B_{\tilde{\theta}}(\p)\times B_{\tilde{\theta}}(\p)}h(\dist(x, y))\d x\d y<0.
\]
Since $m>2$, by comparing the two terms of orders $\alpha^m$ and $\alpha^2$, we have
\[
\lim_{\alpha\to0+}E[\rho_\alpha]<0.
\]
This implies that there exists $\alpha_0>0$ such that
\[
E[\rho_\alpha]<0, \qquad\forall\alpha\in(0,\alpha_0].
\]
Note however that $\rho_\alpha$ is not a probability measure, so it cannot be used to conclude the theorem.

Let $N$ be an integer such that $ N \geq \frac{1}{\alpha_0|B_{\tilde{\theta}}(\p)|}$. Then, 
\[
\alpha_*:=\frac{1}{N|B_{\tilde{\theta}}(\p)|}\leq \alpha_0,
\]
and hence,
\begin{equation}
\label{eqn:Ealpha*}
E[\rho_{\alpha_*}]<0.
\end{equation}

Since $N$ is finite, we can choose $N$ points $x_1, x_2, \cdots, x_N \in \bbh^\dm$ which satisfy
\[
B_{\tilde{\theta}}(x_i)\cap B_{\tilde{\theta}}(x_j)=\emptyset,\qquad \text{ for all } 1\leq i<j\leq N.
\]
Using these points, construct the measure $\rho$ by
\[
\rho=\frac{1}{N|B_{\tilde{\theta}}(\p)|}\sum_{i=1}^N\chi_{B_{\tilde{\theta}}(x_i)}.
\]
Note that $\rho$ is a probability measure, as
\[
\int_{\bbh^\dm}\rho(x)\d x=\alpha_*\sum_{i=1}^N|B_{\tilde{\theta}}(x_i)| =\alpha_* N|B_{\tilde{\theta}}(\p)|=1,
\]
where we used that all balls with radius $\tilde{\theta}$ have the same volume since $\bbh^\dm$ has constant sectional curvature. 

The conclusion can then be inferred  from the following calculation:
\begin{align*}
E[\rho]&
=\alpha_*^m\sum_{i=1}^N\left(\frac{1}{m-1}\int_{B_{\tilde{\theta}}(x_i)}1\d x\right)+\frac{\alpha_*^2}{2}\sum_{i, j=1}^N\left(\iint_{B_{\tilde{\theta}}(x_i)\times B_{\tilde{\theta}}(x_j)}h(\dist(x, y))\d x \d y\right)\\
&\leq\alpha_*^m\sum_{i=1}^N\left(\frac{1}{m-1}\int_{B_{\tilde{\theta}}(x_i)}1\d x\right)+\frac{\alpha_*^2}{2}\sum_{i=1}^N\left(\iint_{B_{\tilde{\theta}}(x_i)\times B_{\tilde{\theta}}(x_i)}h(\dist(x, y))\d x \d y\right)\\
&=\alpha_*^m\sum_{i=1}^N\left(\frac{1}{m-1}\int_{B_{\tilde{\theta}}(\p)}1\d x\right)+\frac{\alpha_*^2}{2}\sum_{i=1}^N\left(\iint_{B_{\tilde{\theta}}(\p)\times B_{\tilde{\theta}}(\p)}h(\dist(x, y))\d x \d y\right)\\
&=NE[\rho_{\alpha_*}]<0,
\end{align*}
where for the second line we only retained the diagonal part ($1\leq i=j\leq N$) of the double sum ($1\leq i, j\leq N$) and used \eqref{hdest}. Also, for the third line we used that $\bbh^\dm$ is a homogeneous manifold, and translated by isometries the balls $B_{\tilde{\theta}}(x_i)$ to $B_{\tilde{\theta}}(\p)$. For the last line we used \eqref{eqn:Ealpha*}.
\end{proof}


Proposition \ref{prop:E-neg-mg2} covers only the range $m>2$. The analogous result for $1<m\leq 2$, given by the following proposition, assumes an additional assumption on $h$.
\begin{proposition}
\label{prop:E-neg-ml2}
Consider the energy functional \eqref{eqn:energy} on $\bbh^\dm$, with $1<m\leq2$, and let $\p$ be a fixed pole in $\bbh^\dm$.  Assume the interaction potential satisfies assumption \textbf{(H)}, with a function $h$ that satisfies $\lim_{\theta \to \infty} h(\theta) =0$ and
\begin{equation}
\label{hyp:h-extra}
h(\theta_0)<-\frac{2}{(m-1)|B_{\theta_0/2}(\p)|^{m-1}},
\end{equation}
for some $\theta_0>0$.  Then, there exists $\rho\in L^m(\bbh^\dm) \cap \calP(\bbh^\dm)$ such that
\[
E[\rho]<0.
\]
\end{proposition}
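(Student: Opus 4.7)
The plan is to exhibit an explicit $\rho$ and show $E[\rho]<0$ by a direct computation. Unlike the case $m>2$, where one can exploit the scaling $E[\alpha\chi_A]=A\alpha^m+B\alpha^2$ and take $\alpha\to 0^+$ to make the quadratic (interaction) term dominate, for $1<m\le 2$ the entropy term $\alpha^m$ is at least as large as $\alpha^2$ for small $\alpha$, so the previous trick fails. The extra hypothesis \eqref{hyp:h-extra} is designed precisely to let a single uniform density on the ball $B_{\theta_0/2}(\p)$ do the job.

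Concretely, I would take
\[
\rho(x) = \frac{1}{|B_{\theta_0/2}(\p)|}\,\chi_{B_{\theta_0/2}(\p)}(x),
\]
which clearly belongs to $L^m(\bbh^\dm)\cap\calP(\bbh^\dm)$. For the entropy part a direct calculation gives
\[
\frac{1}{m-1}\int_{\bbh^\dm}\rho^m\,\d x = \frac{1}{(m-1)\,|B_{\theta_0/2}(\p)|^{m-1}}.
\]
For the interaction part I would use the triangle inequality: for $x,y\in B_{\theta_0/2}(\p)$ one has $\dist(x,y)\le \theta_x+\theta_y<\theta_0$, so by monotonicity of $h$,
\[
h(\dist(x,y))\le h(\theta_0)\qquad\text{for all }(x,y)\in B_{\theta_0/2}(\p)\times B_{\theta_0/2}(\p).
\]
Consequently
\[
\frac{1}{2}\iint_{\bbh^\dm\times\bbh^\dm} h(\dist(x,y))\rho(x)\rho(y)\,\d x\,\d y \le \frac{h(\theta_0)}{2}.
\]

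Putting the two estimates together,
\[
E[\rho] \le \frac{1}{(m-1)\,|B_{\theta_0/2}(\p)|^{m-1}} + \frac{h(\theta_0)}{2}.
\]
The hypothesis \eqref{hyp:h-extra} states exactly that $\tfrac{h(\theta_0)}{2}<-\tfrac{1}{(m-1)|B_{\theta_0/2}(\p)|^{m-1}}$, so $E[\rho]<0$, which is the desired conclusion.

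There is no real obstacle here; the only conceptual point is recognizing that the role of \eqref{hyp:h-extra} is to force the balance between entropy (which, for $m\le 2$, one can no longer make arbitrarily small by lowering the height $\alpha$) and interaction at the specific scale of a ball of radius $\theta_0/2$. One minor bookkeeping subtlety is that one should note $h(\theta_0)$ is automatically finite (the right-hand side of \eqref{hyp:h-extra} is a finite negative number, so the hypothesis is vacuous unless $h(\theta_0)\in\bbr$), which ensures all integrals above are well-defined.
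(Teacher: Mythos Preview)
Your proof is correct and essentially identical to the paper's: both take the uniform density on $B_{\theta_0/2}(\p)$, compute the entropy exactly as $\frac{1}{(m-1)|B_{\theta_0/2}(\p)|^{m-1}}$, bound the interaction energy above by $\frac{h(\theta_0)}{2}$ via the triangle inequality and monotonicity of $h$, and conclude from \eqref{hyp:h-extra}. Your commentary explaining why the $m>2$ scaling trick fails here and why \eqref{hyp:h-extra} is the right substitute is accurate and adds useful context.
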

\begin{proof}
Consider again densities in the form \eqref{eqn:rhoR}, for $R>0$. Then, by the monotonicity of $h$, we have
\begin{align*}
E[\rho_R] &= \frac{1}{(m-1)|B_R(\p)|^{m-1}} + \frac{1}{2} \frac{1}{|B_R(\p)|^2} \iint_{B_R(\p) \times B_R(\p)} h(\dist(x,y)) \d x \d y \\
&\leq\frac{1}{(m-1)|B_R(\p)|^{m-1}}+\frac{h(2R)}{2}.
\end{align*}
By the extra assumption \eqref{hyp:h-extra} on $h$, by choosing $R = \frac{\theta_0}{2}$, we find 
\[
E[\rho_R]<0,
\]
and this concludes the proof.
\end{proof}
\begin{remark}
\label{rmk:extra-cond}
The volumes of geodesic balls on the hyperbolic space grow exponentially fast as their radius increases to infinity -- see Theorem \ref{cor:AV-bounds}. From this property we infer that a Riesz potential of the form $h(\theta) = -\frac{1}{\beta \, \theta^\beta}$ with $\beta>0$, satisfies \eqref{hyp:h-extra} for a large enough $\theta_0$. Hence, while the extra condition in \eqref{hyp:h-extra} can be interpreted as $h$ not being allowed to increase too fast, it is sufficiently mild to include large classes of interaction potentials considered in the literature.
\end{remark}    

\begin{theorem}[Existence of global minimizers on $\bbh^\dm$: case $\h_\infty = 0$]
\label{thm:exist-zero}
Consider the energy functional \eqref{eqn:energy} on $\bbh^\dm$, with $m>1$, and let $\p$ be a fixed pole on $\bbh^\dm$. Assume the interaction potential satisfies assumption \textbf{(H)}, for a function $h$ that satisfies \eqref{eqn:h-sing} and $\lim_{\theta \to \infty} h(\theta)=0$. In addition, if  $1<m\leq 2$, assume that $h$ satisfies the extra assumption \eqref{hyp:h-extra}, for some $\theta_0>0$. Then, the energy functional $E[\cdot]$ has a global minimizer in $\calP(\bbh^\dm)$.
\end{theorem}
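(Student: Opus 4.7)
The plan parallels Theorem~\ref{thm:exist-inf}: extract a radially symmetric decreasing minimizing sequence, establish tightness, pass to a weak-$*$ limit via Prokhorov, and conclude by Proposition~\ref{prop:lsc} together with Proposition~\ref{spresp}. The decisive difference from the case $h_\infty = \infty$ is that no direct confinement mechanism from growth of $h$ is available, so the tightness step must be driven instead by the strict negativity of the energy infimum. First, apply Propositions~\ref{prop:E-neg-mg2} and~\ref{prop:E-neg-ml2} (with assumption~\eqref{hyp:h-extra} entering precisely in the regime $1 < m \leq 2$) to obtain
\[
E_{\inf} := \inf_{\rho \in \calP(\bbh^\dm)} E[\rho] < 0.
\]
By Riesz rearrangement on $\bbh^\dm$ (Theorem~\ref{RIH}), fix a minimizing sequence $\{\rho_k\} \subset L^m(\bbh^\dm) \cap \calP_\p^{sd}(\bbh^\dm)$; Proposition~\ref{PUB} then provides uniform bounds on $\int\rho_k^m\,dx$ and on the interaction energy.

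The heart of the proof, and the main obstacle, is tightness of $\{\rho_k\}$ at $\p$. I would argue by contradiction: assume that after passing to a subsequence there exist $\epsilon_0 > 0$ and $R_k \to \infty$ with $\int_{B_{R_k}(\p)^c}\rho_k\,dx \geq \epsilon_0$. Radial monotonicity and $\int\rho_k = 1$ give the pointwise bound $\rho_k(x) \leq |B_{\theta_x}(\p)|^{-1}$, so $\|\rho_k\|_{L^\infty(B_{R_k}(\p)^c)} \to 0$. Decomposing $\rho_k = \rho_k^{\mathrm{core}} + \rho_k^{\mathrm{tail}}$ with a radial gap separating the supports, I would establish the three decoupling estimates $\int(\rho_k^{\mathrm{tail}})^m\,dx \to 0$, $\iint h(\dist(x,y))\rho_k^{\mathrm{tail}}(x)\rho_k^{\mathrm{tail}}(y)\,dx\,dy \to 0$, and $\iint h(\dist(x,y))\rho_k^{\mathrm{core}}(x)\rho_k^{\mathrm{tail}}(y)\,dx\,dy \to 0$. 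The first uses the $L^\infty$ decay of the tail together with $\int\rho_k = 1$; the second splits at a fixed range $L$ (far pairs contribute at most $|h(L)| \to 0$, close pairs are controlled by the $L^\infty$ bound on the tail, homogeneity of $\bbh^\dm$, and the HLS-type inequality of Theorem~\ref{thm:HLSM} for the singular behavior of $h$ near zero); the third follows from monotonicity of $h$, $h_\infty = 0$, and $\dist(x,y) \to \infty$ on core--tail pairs. Combining these gives $\liminf_k E[\rho_k] = \liminf_k E[\rho_k^{\mathrm{core}}]$.

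To reach a contradiction with $E_{\inf} < 0$, write $\mu_k = \int\rho_k^{\mathrm{core}}\,dx \in [0, 1-\epsilon_0]$. When $\mu_k$ does not tend to zero, set $\tilde\rho_k = \rho_k^{\mathrm{core}}/\mu_k \in \calP(\bbh^\dm)$; the scaling identity
\[
E[\rho_k^{\mathrm{core}}] - \mu_k^2\,E[\tilde\rho_k] = \frac{\mu_k^m - \mu_k^2}{m-1}\int\tilde\rho_k^m\,dx,
\]
together with $E[\tilde\rho_k] \geq E_{\inf}$, yields $E[\rho_k^{\mathrm{core}}] \geq \mu_k^2 E_{\inf}$ in the regime $1 < m \leq 2$ (since $\mu_k^m \geq \mu_k^2$ there); passing to the liminf gives $E_{\inf} \geq (\mu^*)^2 E_{\inf} > E_{\inf}$, the desired contradiction. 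The subcase $\mu_k \to 0$ is handled by applying Theorem~\ref{thm:HLSM} to $\tilde\rho_k$, which forces the interaction component of $E[\rho_k^{\mathrm{core}}]$ to vanish (the exponent $2 - \alpha m/(\dm(m-1))$ is positive under~\eqref{eqn:h-sing}), so that $\liminf E[\rho_k^{\mathrm{core}}] \geq 0 > E_{\inf}$. For $m > 2$ the sign of the scaling identity reverses, and this is the most delicate point of the argument: I would close it by using homogeneity of $\bbh^\dm$ to translate $\rho_k^{\mathrm{tail}}$ via an isometry to a position at bounded positive distance from the support of $\rho_k^{\mathrm{core}}$, which leaves self-interactions and the $L^m$ norm unchanged while strictly decreasing the cross interaction from $o(1)$ to a negative constant of order $h(\cdot)\mu^*(1-\mu^*)$ at bounded range, contradicting that $\{\rho_k\}$ is a minimizing sequence.

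Once tightness is in hand, Prokhorov's theorem yields a subsequence converging weakly-$*$ to some $\rho_0 \in \calP(\bbh^\dm)$. Proposition~\ref{prop:lsc} then gives $E[\rho_0] \leq \liminf_k E[\rho_k] = E_{\inf}$, and Proposition~\ref{spresp} forces $\rho_0 \in L^m(\bbh^\dm)$ (otherwise $E[\rho_0] = \infty$), so $\rho_0$ is a global minimizer in $\calP(\bbh^\dm)$.
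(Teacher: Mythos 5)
Your architecture is genuinely different from the paper's. You attempt to prove tightness of the minimizing sequence by a concentration--compactness dichotomy and then invoke Prokhorov; the paper never proves tightness at all --- it extracts a weak-$*$ limit $\rho_0\in\calM_+(\bbh^\dm)$ with $\int_{\bbh^\dm}\rho_0\,\d x\leq 1$ via Theorem \ref{CWT}, shows $E[\rho_0]\leq E_{\inf}<0$ by lower semicontinuity, and then rules out mass loss by exhibiting a probability measure with energy strictly below $E[\rho_0]$. Your branch $1<m\leq 2$ does close: the tail's $L^\infty$ norm decays like $|B_{R_k}(\p)|^{-1}$, the gap construction is a routine pigeonhole over annuli, the exponent $2-\alpha m/(\dm(m-1))$ is indeed positive under \eqref{eqn:h-sing}, and the scaling identity yields the contradiction $E_{\inf}\geq(\mu^*)^2E_{\inf}>E_{\inf}$ exactly as you state. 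This is a legitimate alternative to the paper's normalization argument $E[\rho_0/\|\rho_0\|_{L^1}]\leq\|\rho_0\|_{L^1}^{-2}E[\rho_0]<E[\rho_0]$.

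The case $m>2$, however, contains a genuine gap, located precisely where you flag the argument as most delicate. You propose to translate $\rho_k^{\mathrm{tail}}$ by an isometry ``to a position at bounded positive distance from the support of $\rho_k^{\mathrm{core}}$'' so that the cross interaction drops from $o(1)$ to a negative constant of order $h(D)\mu^*(1-\mu^*)$. This is impossible: radial monotonicity gives $\rho_k(x)\leq|B_{\theta_x}(\p)|^{-1}$, hence $\|\rho_k^{\mathrm{tail}}\|_{L^\infty}\leq|B_{R_k}(\p)|^{-1}\to0$, and since $|B_T(\p)|\lesssim e^{(\dm-1)T}$ for fixed $T$ while $|B_{R_k}(\p)|\gtrsim e^{(\dm-1)R_k}$, the tail mass contained in \emph{any} ball of fixed radius tends to zero. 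The escaping mass is uniformly delocalized, so no isometry can place a positive fraction of it within bounded distance of the core, and the relocated cross interaction still vanishes. The correct move --- and the one the paper makes --- is not to relocate the tail but to \emph{replace} it by a fixed concentrated profile of the same mass: for $m>2$, $E[(1-\mu)\rho]\leq(1-\mu)^2E[\rho]<0$ for the competitor $\rho$ of Proposition \ref{prop:E-neg-mg2}, the cross term is $\leq 0$ because $h\leq 0$, and the entropy of the superposition is controlled (the paper does this for $\rho_0+(1-\|\rho_0\|_{L^1})\rho_{\p'}$ via the H\"older/mean-value estimate \eqref{Eineq}; in your framework $\rho_k^{\mathrm{core}}$ is supported in a ball, so taking $\p'$ far enough makes the supports disjoint and the entropy exactly additive). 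This produces a probability measure with energy $<E[\rho_0]\leq E_{\inf}$, the desired contradiction. As written, your $m>2$ step does not close, and with it the tightness claim --- and hence the appeal to Prokhorov --- is unsupported in that regime.
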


\begin{proof}
Let $\{\rho_k\}_{k\geq 1}\subset  L^m(\bbh^\dm) \cap \calP_\p^{sd}(\bbh^\dm)$ be a minimizing sequence of the energy functional. From Theorem \ref{CWT}, there exists a subsequence $\{\rho_{k_l}\}_{l\geq 1 }$ that converges weakly-$^*$ to $\rho_0\in\mathcal{M}_+(\bbh^\dm)$, which satisfies $\int_{\bbh^\dm}\rho_0(x)\d x\leq 1$. 

From Propositions \ref{prop:E-neg-mg2} and \ref{prop:E-neg-ml2}, we know that there exists $\rho\in L^m(\bbh^\dm) \cap \calP(\bbh^\dm)$ such that $E[\rho]<0$. Then, since $\rho_{k_l}$ is a minimizing sequence we infer that $\liminf_{l \to \infty} E[\rho_{k_l}]<0$, and hence, by the weak lower semicontinuity of the energy (Proposition \ref{prop:lsc}), we get
\begin{equation}
\label{eqn:Erho0-ineq}
E[\rho_0]\leq \liminf_{l \to \infty} E[\rho_{k_l}]<0
\end{equation} 
We want to show that $\rho_0$ is the global energy minimizer, and for this purpose we need to show that it is a probability measure (i.e., $\int_{\bbh^\dm}\rho_0(x)\d x=1$).  Since $E[0]=0$, $\rho_0$ cannot be the zero measure, and if $\|\rho_0\|_{L^1(\bbh^\dm)}=1$, then the proof is done. Hence, we assume $0<\|\rho_0\|_{L^1(\bbh^\dm)}<1$, and follow a proof by contradiction argument.

We consider the two cases $1<m\leq 2$ and $m>2$ separately.
\smallskip

{\em Case 1: $1<m\leq 2$.} For this case, the proof is simple. Define $\tilde{\rho}:=\frac{1}{\|\rho_0\|_{L^1(\bbh^\dm)}}\rho_0\in\mathcal{P}(\bbh^\dm)$. Then, we have
\begin{align*}
E[\tilde{\rho}]&=\frac{1}{\|\rho_0\|_{L^1(\bbh^\dm)}^m} \times \frac{1}{m-1}\int_{\bbh^\dm} \rho_0^m(x)\d x+\frac{1}{\|\rho_0\|_{L^1(\bbh^\dm)}^2}\times \frac{1}{2}\iint_{\bbh^\dm \times \bbh^\dm}h(\dist(x, y))\rho_0(x)\rho_0(y)\d x\d y\\
&\leq \frac{1}{\|\rho_0\|_{L^1(\bbh^\dm)}^2} \times \frac{1}{m-1}\int_{\bbh^\dm} \rho_0^m(x)\d x+\frac{1}{\|\rho_0\|_{L^1(\bbh^\dm)}^2}\times \frac{1}{2}\iint_{\bbh^\dm \times \bbh^\dm}h(\dist(x, y))\rho_0(x)\rho_0(y)\d x\d y\\
&=\frac{1}{\|\rho_0\|_{L^1(\bbh^\dm)}^2} E[\rho_0],
\end{align*}
where for the second line we used that $\|\rho_0\|_{L^1(\bbh^\dm)}^2 \leq  \|\rho_0\|_{L^1(\bbh^\dm)}^m$ (as $\|\rho_0\|_{L^1(\bbh^\dm)}<1$).

Furthermore, since $E[\rho_0]<0$ and $\|\rho_0\|_{L^1(\bbh^\dm)}<1$, we infer from above that
\[
E[\tilde{\rho}]<E[\rho_0],
\]
which combined with \eqref{eqn:Erho0-ineq} leads to the following contradiction:
\[
E_{\inf} \leq E[\tilde{\rho}] < \liminf_{l\to\infty}E[\rho_{k_l}] = E_{\inf}.
\]
Hence, necessarily $\rho_0 \in \mathcal{P}(\bbh^\dm)$.
\smallskip

{\em Case 2: $m>2$.} From Proposition \ref{prop:E-neg-mg2}, there exists $\rho\in L^m(\bbh^\dm) \cap \calP(\bbh^\dm)$ such that $E[\rho]<0$. We can assume that both $\rho_0$ and $\rho$ are radially symmetric and decreasing about the pole $\p$. 

As $\bbh^\dm$ is a homogeneous manifold, for any arbitrary point $\p'\in \bbh^\dm$, there exists an isometry $i_{\p'}:\bbh^\dm \to \bbh^\dm$ which sends $\p$ into $\p'$, i.e., $i_{\p'}(\p)=\p'$. Set
\[
\rho_{\p'}:={(i_{\p'})}_\# \rho.
\]
Since $\rho \in \mathcal{P} (\bbh^\dm) $ and $i_{\p'}$ is an isometry, we also have $\rho_{\p'} \in \mathcal{P} (\bbh^\dm) $. Also, as $i_{\p'}$ is an isometry, we have
\[
\int_{\bbh^\dm} \rho(x)^m\d x=\int_{\bbh^\dm} \rho_{\p'}(x)^m\d x,
\]
and
\begin{equation}
\label{eqn:Eint-isometry}
\quad \iint_{\bbh^\dm \times \bbh^\dm}h(\dist(x, y))\rho_{\p'}(x)\rho_{\p'}(y)\d x\d y=\iint_{\bbh^\dm \times \bbh^\dm} h(\dist(x, y))\rho(x)\rho(y)\d x\d y,
\end{equation}
which put together yield 
\[
E[\rho_{\p'}]=E[\rho]<0.
\]

By a similar argument as above, we also have
\[
E[(1-\|\rho_0\|_{L^1(\bbh^\dm)})\rho_{\p'}]=E[(1-\|\rho_0\|_{L^1(\bbh^\dm)})\rho].
\]
Since $0<1-\|\rho_0\|_{L^1(\bbh^\dm)}<1$ and $m>2$, we also get
\begin{equation}
\label{E-comb-neg}
\begin{aligned}
&E[(1-\|\rho_0\|_{L^1(\bbh^\dm)})\rho_{\p'}] \\
&=(1-\|\rho_0\|_{L^1(\bbh^\dm)})^m\times \frac{1}{m-1}\int_{\bbh^\dm} \rho_{\p'}(x)^m\d x+(1-\|\rho_0\|_{L^1(\bbh^\dm)})^2\times \frac{1}{2}\iint_{\bbh^\dm \times \bbh^\dm}h(\dist(x, y))\rho_{\p'}(x)\rho_{\p'}(y)\d x\d y  \\
&\leq (1-\|\rho_0\|_{L^1(\bbh^\dm)})^2\times \frac{1}{m-1}\int_{\bbh^\dm} \rho_{\p'}(x)^m\d x+(1-\|\rho_0\|_{L^1(\bbh^\dm)})^2\times \frac{1}{2}\iint_{\bbh^\dm \times \bbh^\dm}h(\dist(x, y))\rho_{\p'}(x)\rho_{\p'}(y)\d x\d y \\
&=(1-\|\rho_0\|_{L^1(\bbh^\dm)})^2E[\rho_{\p'}]<0. 
\end{aligned}
\end{equation}

The goal is to justify the following claim: \\

{\em \underline{Claim}:} There exists $\p'\in \bbh^\dm$ such that
\begin{equation}
\label{eqn:claim}
E[\rho_0+(1-\|\rho_0\|_{L^1(\bbh^\dm)})\rho_{\p'}]<E[\rho_0].
\end{equation}
Given that $\rho_0+(1-\|\rho_0\|_{L^1(\bbh^\dm)})\rho_{\p'} \in \mathcal{P}(\bbh^\dm)$, this will lead to the desired contradiction.

By a direct calculation, we get
\begin{align*}
&E[\rho_0+(1-\|\rho_0\|_{L^1(\bbh^\dm)})\rho_{\p'}]\\
&\quad = \frac{1}{m-1}\int_{\bbh^\dm} \left(\rho_0(x)+(1-\|\rho_0\|_{L^1(\bbh^\dm)})\rho_{\p'}(x)\right)^m\d x\\
& \qquad + \frac{1}{2}\iint_{\bbh^\dm \times \bbh^\dm}h(\dist(x, y))\left(\rho_0(x)+ (1-\|\rho_0\|_{L^1(\bbh^\dm)})\rho_{\p'}(x)\right)\left(\rho_0(y)+(1-\|\rho_0\|_{L^1(\bbh^\dm)})\rho_{\p'}(y)\right)\d x\d y\\
& \quad = \frac{1}{m-1}\int_{\bbh^\dm} \left(\rho_0(x)+(1-\|\rho_0\|_{L^1(\bbh^\dm)})\rho_{\p'}(x)\right)^m\d x\\
&\qquad + \frac{1}{2}\iint_{\bbh^\dm \times \bbh^\dm}h(\dist(x, y))\rho_0(x)\rho_0(y)\d x\d y
+ (1-\|\rho_0\|_{L^1(\bbh^\dm)}) \iint_{\bbh^\dm \times \bbh^\dm}h(\dist(x, y))\rho_0(x)\rho_{\p'}(y)\d x\d y\\
&\qquad + \frac{1}{2} (1-\|\rho_0\|_{L^1(\bbh^\dm)})^2 \iint_{\bbh^\dm \times \bbh^\dm}h(\dist(x, y)) \rho_{\p'}(x) \rho_{\p'}(y)\d x\d y\\
& \quad \leq  \frac{1}{m-1}\int_{\bbh^\dm} \left(\rho_0(x)+ (1-\|\rho_0\|_{L^1(\bbh^\dm)} )\rho_{\p'}(x) \right)^m\d x\\
&\qquad + \frac{1}{2}\iint_{\bbh^\dm \times \bbh^\dm}h(\dist(x, y))\rho_0(x)\rho_0(y)\d x\d y 
+ \frac{1}{2} (1-\|\rho_0\|_{L^1(\bbh^\dm)})^2 \iint_{\bbh^\dm \times \bbh^\dm}h(\dist(x, y)) \rho_{\p'}(x) \rho_{\p'}(y)\d x\d y,
\end{align*}
where for the last inequality we used $h\leq0$ and dropped the mixed term. Furthermore, use \eqref{eqn:Eint-isometry} to write the result above as
\begin{equation}
\label{eqn:E-decomp}
\begin{aligned}
&E[\rho_0+(1-\|\rho_0\|_{L^1(\bbh^\dm)})\rho_{\p'}]\\[2pt]
& \quad  \leq \frac{1}{m-1}\int_{\bbh^\dm} \left(\rho_0(x)+(1-\|\rho_0\|_{L^1(\bbh^\dm)})\rho_{\p'}(x)\right)^m\d x\\
& \qquad + \frac{1}{2}\iint_{\bbh^\dm \times \bbh^\dm}h(\dist(x, y))\rho_0(x)\rho_0(y)\d x\d y
+ \frac{1}{2}  (1-\|\rho_0\|_{L^1(\bbh^\dm)})^2 \iint_{\bbh^\dm \times \bbh^\dm}h(\dist(x, y)) \rho(x) \rho(y)\d x\d y.
\end{aligned}
\end{equation}

Next, we will focus the entropy part, i.e., the first term in the r.h.s. of \eqref{eqn:E-decomp}. From the mean value theorem, for any nonnegative numbers $a$ and $b$, we have
\[
(a+b)^m-a^m= mb(a+b_*)^{m-1},
\]
where $b_*\in[0, b]$. This implies
\[
(a+b)^m-a^m\leq mb(a+b)^{m-1}.
\]
Then, for any non-negative functions $a(x)$ and $b(x)$ on a measurable set $S\subset \bbh^\dm$, we get
\begin{equation}
\label{ab-est1}
\int_S \left((a(x)+b(x))^m-a(x)^m \right)\d x\leq\int_S mb(x)(a(x)+b(x))^{m-1}\d x.
\end{equation}
Also, by H\"{o}lder inequality we can further estimate 
\begin{equation}
\label{ab-est2}
\int_S b(x)(a(x)+b(x))^{m-1}\d x\leq \left(\int_S b(x)^m\d x\right)^{\frac{1}{m}}\left(\int_S(a(x)+b(x))^m
\d x\right)^{\frac{m-1}{m}}.
\end{equation}
Given the following elementary inequality:
\begin{equation}
\label{ab-est3}
(a(x)+b(x))^m\leq (2\max(a(x), b(x)))^m\leq (2a(x))^m+(2b(x))^m=2^m(a(x)^m+b(x)^m),
\end{equation}
we can combine \eqref{ab-est1}, \eqref{ab-est2} and \eqref{ab-est3} to get
\begin{equation}\label{Eineq}
\int_S \left((a(x)+b(x))^m-a(x)^m \right)\d x\leq  
m2^{m-1}\left(\int_S b(x)^m\d x\right)^{\frac{1}{m}}\left(\int_S a(x)^m\d x+\int_S b(x)^m
\d x\right)^{\frac{m-1}{m}},
\end{equation}
which holds for all non-negative functions $a(x)$ and $b(x)$, and measurable subsets $S\subset \bbh^\dm$.

From
\[
\bbh^\dm = \{x\in \bbh^\dm:\dist(x, \p)\geq \dist(\p, \p')/2\}\cup \{x\in M: \dist(x, \p')\geq \dist(\p, \p')/2\},
\]
we can separate the integration on $\bbh^\dm$ of the entropy part as follows:
\begin{equation}
\label{entropy-est1}
\begin{aligned}
\int_{\bbh^\dm} \left(\rho_0(x)+ (1-\|\rho_0\|_{L^1(\bbh^\dm)})\rho_{\p'}(x)\right)^m\d x & \leq \int_{\dist(x, \p)\geq \dist(\p, \p')/2} \left(\rho_0(x)+(1-\|\rho_0\|_{L^1(\bbh^\dm)})\rho_{\p'}(x)\right)^m\d x \\
& \quad+\int_{\dist(x, \p')\geq \dist(\p, \p')/2} \left(\rho_0(x)+(1-\|\rho_0\|_{L^1(\bbh^\dm)})\rho_{\p'}(x)\right)^m\d x.
\end{aligned}
\end{equation}

Substitute 
\[
a(x)=(1-\|\rho_0\|_{L^1(\bbh^\dm)})\rho_{\p'}(x), \quad b(x)=\rho_0(x), \quad \text{ and } \quad S=\{x:\dist(x, \p)\geq \dist(\p, \p')/2\}
\]
into \eqref{Eineq}, to get
\begin{equation}
\label{entropy-est2}
\begin{aligned}
&\int_{\dist(x, \p)\geq \dist(\p, \p')/2} \left(\rho_0(x)+ (1-\|\rho_0\|_{L^1(\bbh^\dm)})\rho_{\p'}(x)\right)^m\d x-\int_{\dist(x, \p)\geq \dist(\p, \p')/2} \left((1-\|\rho_0\|_{L^1(\bbh^\dm)})\rho_{\p'}(x) \right)^m\d x\\
& \quad \leq m2^{m-1}\left(\int_{\dist(x, \p)\geq \dist(\p, \p')/2} \rho_0(x)^m\d x\right)^{\frac{1}{m}}\\
&\qquad\times\left(\int_{\dist(x, \p)\geq \dist(\p, \p')/2} \rho_0(x)^m\d x+\int_{\dist(x, \p)\geq \dist(\p, \p')/2} \left((1-\|\rho_0\|_{L^1(\bbh^\dm)})\rho_{\p'}(x)\right)^m\d x\right)^{\frac{m-1}{m}}\\
& \quad \leq m2^{m-1}\left(\int_{\dist(x, \p)\geq \dist(\p, \p')/2} \rho_0(x)^m\d x\right)^{\frac{1}{m}} \left(\int_{\bbh^\dm} \rho_0(x)^m\d x+\int_{\bbh^\dm} \left( (1-\|\rho_0\|_{L^1(\bbh^\dm)})\rho_{\p'}(x) \right)^m\d x\right)^{\frac{m-1}{m}}.
\end{aligned}
\end{equation}

Similarly, substitute 
\[
a(x)=\rho_0(x),\quad b(x)=(1-\|\rho_0\|_{L^1(\bbh^\dm)})\rho_{\p'}(x), \quad \text{ and } \quad S=\{x:\dist(x, \p')\geq \dist(\p, \p')/2\} 
\]
into \eqref{Eineq}, to get
\begin{equation}
\label{entropy-est3}
\begin{aligned}
&\int_{\dist(x, \p')\geq \dist(\p, \p')/2} \left(\rho_0(x)+(1-\|\rho_0\|_{L^1(\bbh^\dm)})\rho_{\p'}(x)\right)^m\d x-\int_{\dist(x, \p')\geq \dist(\p, \p')/2} \rho_0(x)^m\d x\\
&\quad \leq m2^{m-1}\left(\int_{\dist(x, \p')\geq \dist(\p, \p')/2} \left((1-\|\rho_0\|_{L^1(\bbh^\dm)})\rho_{\p'}(x)\right)^m\d x\right)^{\frac{1}{m}}\\
&\qquad\times \left(\int_{\dist(x, \p')\geq \dist(\p, \p')/2} \rho_0(x)^m\d x+\int_{\dist(x, \p')\geq \dist(\p, \p')/2} \left( (1-\|\rho_0\|_{L^1(\bbh^\dm)})\rho_{\p'}(x) \right)^m\d x\right)^{\frac{m-1}{m}}\\
& \quad \leq m2^{m-1}\left(\int_{\dist(x, \p')\geq \dist(\p, \p')/2} \left( (1-\|\rho_0\|_{L^1(\bbh^\dm)})\rho_{\p'}(x) \right)^m\d x\right)^{\frac{1}{m}} \\
& \qquad \times \left(\int_{\bbh^\dm} \rho_0(x)^m\d x+\int_{\bbh^\dm} \left( (1-\|\rho_0\|_{L^1(\bbh^\dm)})\rho_{\p'}(x) \right)^m\d x\right)^{\frac{m-1}{m}}.
\end{aligned}
\end{equation}

Now combine \eqref{entropy-est1}, \eqref{entropy-est2} and \eqref{entropy-est3} to get
\begin{equation}
\label{entropy-est4}
\begin{aligned}
&\int_{\bbh^\dm} \left(\rho_0(x)+(1-\|\rho_0\|_{L^1(\bbh^\dm)})\rho_{\p'}(x)\right)^m\d x\\
&\quad \leq \int_{\dist(x, \p)\geq \dist(\p, \p')/2} \left( ( 1-\|\rho_0\|_{L^1(\bbh^\dm)})\rho_{\p'}(x)\right)^m\d x+\int_{\dist(x, \p')\geq \dist(\p, \p')/2} \rho_0(x)^m\d x\\
&\qquad + m2^{m-1}\left(\int_{\dist(x, \p)\geq \dist(\p, \p')/2} \rho_0(x)^m\d x\right)^{\frac{1}{m}}\left(\int_{\bbh^\dm} \rho_0(x)^m\d x+\int_{\bbh^\dm} \left( (1-\|\rho_0\|_{L^1(\bbh^\dm)})\rho_{\p'}(x)\right)^m\d x\right)^{\frac{m-1}{m}}\\
&\qquad + m2^{m-1}\left(\int_{\dist(x, \p')\geq \dist(\p, \p')/2} \left( (1-\|\rho_0\|_{L^1(\bbh^\dm)} )\rho_{\p'}(x) \right)^m\d x\right)^{\frac{1}{m}} \\
&\quad \qquad \times \left(\int_{\bbh^\dm} \rho_0(x)^m\d x+\int_{\bbh^\dm} \left( (1-\|\rho_0\|_{L^1(\bbh^\dm)})\rho_{\p'}(x) \right)^m\d x\right)^{\frac{m-1}{m}}.
\end{aligned}
\end{equation}

Since $\int_{\bbh^\dm}\rho_k(x)^m\d x$ is uniformly bounded from above in $k\geq 1$ (see Proposition \ref{PUB}), from the lower semicontinuity of the entropy functional we infer that $\int_{\bbh^\dm}\rho_0(x)^m\d x $ is also bounded from above. Also, we know that $\int_{\bbh^\dm}\rho(x)^m\d x$ is bounded, as $\rho\in L^m(\bbh^\dm)$. Hence, set
\begin{equation}
\label{eqn:U-h0}
U:=\max\left(\int_{\bbh^\dm}\rho_0(x)^m\d x, \int_{\bbh^\dm} \left( (1-\|\rho_0\|_{L^1(\bbh^\dm)})\rho(x) \right)^m\d x\right) < \infty.
\end{equation}
Then, we have
\[
\left(\int_{\bbh^\dm} \rho_0(x)^m\d x+\int_{\bbh^\dm} \left((1-\|\rho_0\|_{L^1(\bbh^\dm)})\rho_{\p'}(x)\right)^m\d x\right)^{\frac{m-1}{m}}\leq (2U)^{\frac{m-1}{m}},
\]
which used in \eqref{entropy-est4} yields
\begin{equation}
\label{entropy-est5}
\begin{aligned}
&\int_{\bbh^\dm} \left(\rho_0(x)+(1-\|\rho_0\|_{L^1(\bbh^\dm)})\rho_{\p'}(x)\right)^m\d x\\
& \quad \leq \int_{\dist(x, \p)\geq \dist(\p, \p')/2} \left((1-\|\rho_0\|_{L^1(\bbh^\dm)})\rho_{\p'}(x) \right)^m\d x+\int_{\dist(x, \p')\geq \dist(\p, \p')/2} \rho_0(x)^m\d x\\[2pt]
&\qquad +m2^{m-1} \left(2U\right)^{\frac{m-1}{m}} \\ 
& \qquad \quad \times
\left( \left(\int_{\dist(x, \p)\geq \dist(\p, \p')/2} \rho_0(x)^m\d x\right)^{\frac{1}{m}}
+\left(\int_{\dist(x, \p')\geq \dist(\p, \p')/2} \left( (1-\|\rho_0\|_{L^1(\bbh^\dm)})\rho_{\p'}(x) \right)^m\d x\right)^{\frac{1}{m}}
\right) \\
& \quad \leq \int_{\bbh^\dm} \left((1-\|\rho_0\|_{L^1(\bbh^\dm)})\rho_{\p'}(x) \right)^m\d x+\int_{\bbh^\dm} \rho_0(x)^m\d x\\[2pt]
&\qquad + m2^{m-\frac{1}{m}}U^{\frac{m-1}{m}} \\
&\qquad \quad \times \left( \left(\int_{\dist(x, \p)\geq \dist(\p, \p')/2}\rho_0(x)^m\d x\right)^{\frac{1}{m}}
+\left(\int_{\dist(x, \p')\geq \dist(\p, \p')/2} \left( (1-\|\rho_0\|_{L^1(\bbh^\dm)})\rho_{\p'}(x) \right)^m\d x\right)^{\frac{1}{m}}
\right).
\end{aligned}
\end{equation}

By the property of isometries, we have
\[
\int_{\dist(x, \p')\geq \dist(\p, \p')/2} \left((1-\|\rho_0\|_{L^1(\bbh^\dm)})\rho_{\p'}(x) \right)^m\d x=\int_{\dist(x, \p)\geq \dist(\p, \p')/2} \left( (1-\|\rho_0\|_{L^1(\bbh^\dm)})\rho(x) \right)^m\d x,
\]
and
\[
\int_{\bbh^\dm} \left( (1-\|\rho_0\|_{L^1(\bbh^\dm)})\rho_{\p'}(x) \right)^m\d x=\int_{\bbh^\dm} \left( (1-\|\rho_0\|_{L^1(\bbh^\dm)})\rho(x) \right)^m\d x,
\]
and these used in \eqref{entropy-est5} yield
\begin{equation}
\label{entropy-est6}
\begin{aligned}
&\int_{\bbh^\dm} \left(\rho_0(x)+(1-\|\rho_0\|_{L^1(\bbh^\dm)})\rho_{\p'}(x)\right)^m\d x\\[2pt]
& \quad \leq \int_{\bbh^\dm} \left((1-\|\rho_0\|_{L^1(\bbh^\dm)})\rho(x) \right)^m\d x+\int_{\bbh^\dm} \rho_0(x)^m\d x\\
& \qquad + m2^{m-\frac{1}{m}}U^{\frac{m-1}{m}} \\
& \qquad \quad \times \left( \left(\int_{\dist(x, \p)\geq \dist(\p, \p')/2}\rho_0(x)^m\d x\right)^{\frac{1}{m}}
+\left(\int_{\dist(x, \p)\geq \dist(\p, \p')/2} ((1-\|\rho_0\|_{L^1(\bbh^\dm)})\rho(x))^m\d x\right)^{\frac{1}{m}}
\right).
\end{aligned}
\end{equation}

Now choose an arbitrary $\epsilon$ that satisfies
\begin{align}\label{condeps}
0<\epsilon<-\frac{E[(1-\|\rho_0\|_{L^1(\bbh^\dm)})\rho]}{m2^{m+1-\frac{1}{m}}U^{\frac{m-1}{m}}}.
\end{align}
Note that this is possible, since $E[(1-\|\rho_0\|_{L^1(\bbh^\dm)})\rho]<0$ by \eqref{E-comb-neg}. From the boundedness of integrals (see \eqref{eqn:U-h0}),  there exists $L>0$ (depending on $\epsilon$, $\rho_0$ and $\rho$) such that 
\[
\int_{\dist(x, \p)\geq L}\rho_0(x)^m\d x<\epsilon^m\qquad\text{and}\qquad \int_{\dist(x, \p)\geq L}((1-\|\rho_0\|_{L^1(\bbh^\dm)})\rho(x))^m\d x<\epsilon^m.
\]
Hence, for any $\p'$ which satisfies $\dist(\p,\p')\geq 2L$, we get from \eqref{entropy-est6} that
\begin{equation}
\label{entropy-est7}
\begin{aligned}
&\int_{\bbh^\dm} \left(\rho_0(x)+(1-\|\rho_0\|_{L^1(\bbh^\dm)})\rho_{\p'}(x)\right)^m\d x\\[2pt]
& \quad \leq \int_{\bbh^\dm} \left((1-\|\rho_0\|_{L^1(\bbh^\dm)})\rho(x) \right)^m\d x+\int_{\bbh^\dm} \rho_0(x)^m\d x
+m2^{m-\frac{1}{m}}U^{\frac{m-1}{m}}\left(\epsilon+\epsilon\right)\\[2pt]
& \quad < \int_{\bbh^\dm} \left((1-\|\rho_0\|_{L^1(\bbh^\dm)})\rho(x)\right)^m\d x+\int_{\bbh^\dm} \rho_0(x)^m\d x-E[(1-\|\rho_0\|_{L^1(\bbh^\dm)})\rho],
\end{aligned}
\end{equation}
where in the last inequality we used the assumption \eqref{condeps} on $\epsilon$. 

Finally, we use \eqref{entropy-est7} in \eqref{eqn:E-decomp} to get
\begin{align*}
&E[\rho_0+(1-\|\rho_0\|_{L^1(\bbh^\dm)})\rho_{\p'}]\\[2pt]
&\quad < \int_{\bbh^\dm} \left((1-\|\rho_0\|_{L^1(\bbh^\dm)})\rho(x) \right)^m\d x+\int_{\bbh^\dm} \rho_0(x)^m\d x-E[(1-\|\rho_0\|_{L^1(\bbh^\dm)})\rho]\\[2pt]
&\qquad + \frac{1}{2}\iint_{\bbh^\dm \times \bbh^\dm}h(\dist(x, y))\rho_0(x)\rho_0(y)\d x\d y
+ \frac{1}{2} (1-\|\rho_0\|_{L^1(\bbh^\dm)})^2 \iint_{\bbh^\dm \times \bbh^\dm}h(\dist(x, y))\rho(x)\rho(y)\d x\d y\\
&\quad = E[\rho_0].
\end{align*}
This shows the claim - see \eqref{eqn:claim}. Similar to Case 1, \eqref{eqn:claim} combined with \eqref{eqn:Erho0-ineq}, together with the fact that $\rho_0+(1-\|\rho_0\|_{L^1(\bbh^\dm)})\rho_{\p'} \in \mathcal{P}(\bbh^\dm)$, leads to the following contradiction:
\[
E_{\inf} \leq E[\rho_0+(1-\|\rho_0\|_{L^1(\bbh^\dm)})\rho_{\p'}] < \liminf_{l\to\infty}E[\rho_{k_l}] = E_{\inf}.
\]
We conclude that $\rho_0\in\mathcal{P}(\bbh^\dm)$ and hence, $\rho_0$ is a global minimizer of the energy functional.
\end{proof}

\begin{remark}
\label{rmk:contrast} 
Note that in the case $h_\infty = \infty$ (Theorem \ref{thm:exist-inf}) we were able to show that minimizing sequences are tight. Hence, Prokhorov's theorem applies and minimizing sequences converge weak-$^*$ (on subsequences) to probability measures. In the case $h_\infty = 0$ we could not ensure tightness of minimizing sequences, so we had to use Theorem \ref{CWT} instead, which resulted in a more involved argument to prove existence of energy minimizers.
\end{remark}

\begin{remark}
\label{rmk:generalize} 
By similar arguments, one can prove that Theorem \ref{thm:exist-zero} holds also in the Euclidean space $\bbr^\dm$. Such result (not stated separately here) generalizes \cite[Theorem 5.1]{CaDePa2019}, which only considers Riesz potentials.
\end{remark}


\appendix
\section{Proof of Proposition \ref{prop:nonexist}}
\label{appendix:prop-nonexist}
\setcounter{equation}{0}

On a Cartan-Hadamard manifold the sectional curvatures satisfy \eqref{eqn:K-nonpos} and hence \eqref{eqn:comp-Rd} holds. Fix a pole $\p \in M$ and consider the following family of probability density functions $\rho_R$ that depend on $R>0$:
\begin{equation}
\label{eqn:rhoR}
\rho_{R}(x)=\begin{cases}
\displaystyle\frac{1}{|B_R(\p)|},&\quad\text{when }x\in B_R(\p),\\[10pt]
0,&\quad\text{otherwise}.
\end{cases}
\end{equation}

Assume first that $h$ satisfies \eqref{eqn:cond1}. We will estimate $E[\rho_R]$ in the limit $R \to 0$.  The entropy term can be calculated from \eqref{eqn:rhoR} and then estimated using \eqref{eqn:comp-Rd} as
\begin{align}
\frac{1}{m-1}\int_M \rho_R(x)^{m} \d x&=\frac{1}{(m-1)|B_R(\p)|^{m-1}} \nonumber \\
& \leq \frac{1}{(m-1)\left(w(\dm) R^\dm \right)^{m-1}}. \label{eqn:entropy-rhoR}
\end{align}
On the other hand, the interaction energy can be estimated as 
\begin{align}
\frac{1}{2}\iint_{M \times M} h(\dist(x, y))\rho_R(x)\rho_R(y)\d x\d y &\leq \frac{1}{2}\iint_{M \times M} h(2R)\rho_R(x)\rho_R(y)\d x \d y \nonumber \\[2pt]
&=\frac{h(2R)}{2}, \label{eqn:int-rhoR}
\end{align}
where we used that $h$ is non-decreasing and $\sup_{x, y\in \mathrm{supp}(\rho_R)}\dist(x, y)= 2R$. By combining \eqref{eqn:entropy-rhoR} and \eqref{eqn:int-rhoR} we then get
\begin{equation}
\label{estER}
E[\rho_R] \leq \frac{1}{(m-1)w(\dm)^{m-1}R^{\dm(m-1)}}+\frac{h(2R)}{2}.
\end{equation}

If the r.h.s. of \eqref{estER} satisfies 
\[
\lim_{R\to 0+}\left(\frac{1}{(m-1)w(\dm)^{m-1}R^{\dm(m-1)}}+\frac{h(2R)}{2}\right)=-\infty,
\]
then $\lim_{R\to0+}E[\rho_R]=-\infty$, and a global minimizer cannot exist (the attraction is too strong and blow-up occurs). Note that we can write
\[
\lim_{R\to 0+}\left(\frac{1}{(m-1)w(\dm)^{m-1}R^{\dm(m-1)}}+\frac{h(2R)}{2}\right)=\lim_{R\to 0+}\left(\frac{2^{\dm(m-1)}}{(m-1)w(\dm)^{m-1}R^{\dm(m-1)}}+\frac{h(R)}{2}\right).
\]
These considerations prove the statement if $h$ satisfies \eqref{eqn:cond1}.

Now, assume $h$ satisfies \eqref{eqn:cond2}. Then, there exist $\theta_0>0$ and $\alpha>0$ such that
\begin{align}\label{singd}
\theta^\dm h(\theta)\leq -\alpha,\quad\text{for all}~0<\theta<2\theta_0.
\end{align}
Consider $\rho_{\theta_0}$ as defined in \eqref{eqn:rhoR} (use $R=\theta_0$). Then, its energy can be written as
\begin{equation}
\label{eqn:Erho0}
E[\rho_{\theta_0}]=\frac{1}{(m-1)|B_{\theta_0}(\p)|^{m-1}}+\frac{1}{2}\iint_{M\times M}h(\dist(x, y))\rho_{\theta_0}(x)\rho_{\theta_0}(y)\d x\d y.
\end{equation}

As $h$ is singular at $0$, the above interaction energy part can be expressed as
\[
\iint_{M\times M}h(\dist(x, y))\rho_{\theta_0}(x)\rho_{\theta_0}(y)\d x\d y=\lim_{\delta\to0+}\iint_{\dist(x, y)\geq \delta}h(\dist(x, y))\rho_{\theta_0}(x)\rho_{\theta_0}(y)\d x\d y.
\]
From the definition of $\rho_{\theta_0}$, we get
\[
\iint_{\dist(x, y)\geq \delta}h(\dist(x, y))\rho_{\theta_0}(x)\rho_{\theta_0}(y)\d x\d y=\frac{1}{|B_{\theta_0}(\p)|^2}\iint_{\substack{\delta\leq \dist(x, y)\\\theta_x, \theta_y\leq \theta_0}}h(\dist(x, y))\d x\d y.
\]
Since
\begin{multline}
\{(x, y)\in M\times M: \delta\leq \dist(x, y) \text{ and } \theta_x, \theta_y\leq \theta_0\}
\supset  \\
 \{(x, y)\in M\times M: \delta\leq \dist(x, y)\leq \theta_0/2 \text{ and } \theta_x\leq \theta_0/2\},
\end{multline}
and $h(\dist(x, y))\leq0$ for all $(x, y)\in\{(x, y)\in M\times M: \delta\leq \dist(x, y) \text{ and } \theta_x, \theta_y\leq \theta_0\}$,
we also have
\[
\iint_{\substack{\delta\leq \dist(x, y) \\\theta_x, \theta_y\leq \theta_0}}h(\dist(x, y))\d x\d y\leq \int_{\theta_x\leq \theta_0/2}\int_{\delta\leq \dist(x, y)\leq \theta_0/2}h(\dist(x, y))\d y\d x.
\]

Now, use \eqref{singd} to get
\begin{align*}
\int_{\theta_x\leq \theta_0/2}\int_{\delta\leq \dist(x, y)\leq \theta_0/2}h(\dist(x, y))\d y\d x&\leq -\alpha\int_{\theta_x\leq \theta_0/2}\int_{\delta\leq \dist(x, y)\leq \theta_0/2}\dist(x, y)^{-\dm}\d y\d x\\
&= -\alpha\int_{\theta_x\leq \theta_0/2}\int_\delta^{\theta_0/2}r^{-\dm}|\partial B_r(\p)|\d r \d x\\
&=-\alpha|B_{\theta_0/2}(\p)|\int_\delta^{\theta_0/2}r^{-\dm}|\partial B_r(\p)|\d r.
\end{align*}
Using $|\partial B_r(\p)|\geq \dm w(\dm) r^{\dm-1}$ in the above, we then find
\begin{align*}
\int_{\theta_x\leq \theta_0/2}\int_{\delta\leq \dist(x, y)\leq 2\theta_0/2}h(\dist(x, y))\d y\d x&\leq-\alpha|B_{\theta_0/2}(\p)|\dm w(\dm)\int_\delta^{\theta_0/2}r^{-1}\d r\\
&=-\alpha|B_{\theta_0/2}(\p)|\dm w(\dm)\ln\left(\frac{\theta_0}{2\delta}\right).
\end{align*}
Finally, using these estimates in \eqref{eqn:Erho0} we get 
\[
E[\rho_{\theta_0}]\leq \frac{1}{(m-1)|B_{\theta_0}(\p)|^{m-1}}-\frac{\alpha|B_{\theta_0/2}(\p)|\dm w(\dm)}{2|B_{\theta_0}(\p)|^2}\lim_{\delta\to0+}\ln\left(\frac{\theta_0}{2\delta}\right)=-\infty.
\]
This implies $E[\rho_{\theta_0}]=-\infty$, which completes the argument.


\section{Proof of Proposition \ref{prop:wd}}
\label{appendix:prop-wd}
\setcounter{equation}{0}

Since $\rho\in L^m(M)$, the entropy part is well-defined. We need to show that the interaction energy is also well-defined. The interaction energy can be decomposed as
\begin{align*}
&\iint_{M\times M}h(\dist(x, y))\rho(x)\rho(y)\d x\d y\\
&\quad =\iint_{h(\dist(x, y))\leq0}h(\dist(x, y))\rho(x)\rho(y)\d x\d y+\iint_{h(\dist(x, y))>0}h(\dist(x, y))\rho(x)\rho(y)\d x\d y.
\end{align*}
If $h$ is singular at zero and $\lim_{\theta\to\infty}h(\theta)=\infty$, then we may have
\[
\iint_{h(\dist(x, y))\leq0}h(\dist(x, y))\rho(x)\rho(y)\d x\d y=-\infty,\qquad 
\iint_{h(\dist(x, y))>0}h(\dist(x, y))\rho(x)\rho(y)\d x\d y=\infty,
\]
and in such situation, the interaction energy may not be well-defined. However, if $h$ satisfies \eqref{Conh}, then we can prove $\iint_{h(\dist(x, y))\leq0}h(\dist(x, y))\rho(x)\rho(y)\d x\d y>-\infty$. To show this, we start from Lemma \ref{esth}, and estimate
\begin{equation}
\label{eqn:hdneg-1}
\begin{aligned}
\iint_{h(\dist(x, y))\leq 0}h(\dist(x, y))\rho(x)\rho(y)\d x\d y&\geq \iint_{h(\dist(x, y))\leq 0}(-\gamma_1\dist(x, y)^{-\alpha}-\gamma_2)\rho(x)\rho(y)\d x\d y\\
&\geq -\gamma_1\iint_{M\times M}\dist(x, y)^{-\alpha}\rho(x)\rho(y)\d x\d y-\gamma_2.
\end{aligned}
\end{equation}

Then, by the HLS inequality derived in Section \ref{sect:HLS} (Theorem \ref{thm:HLSM}) we get
\begin{equation}
\label{eqn:hdneg-2}
\iint_{M\times M}\dist(x, y)^{-\alpha}\rho(x)\rho(y)\d x\d y\leq r^{-\alpha}+\tilde{C}(\alpha, \dm, r, c_m)\left(\int_M\rho(x)^m\d x\right)^{\frac{\alpha}{\dm(m-1)}},
\end{equation}
for a fixed $r>0$ and a constant $\tilde{C}$. Now, by combining \eqref{eqn:hdneg-1} and \eqref{eqn:hdneg-2} we find
\[
\iint_{h(\dist(x, y))\leq 0}h(\dist(x, y))\rho(x)\rho(y)\d x\d y\geq -\gamma_1\tilde{C}(\alpha, \dm, r, c_m)\left(\int_M\rho(x)^m\d x\right)^{\frac{\alpha}{\dm(m-1)}}-(\gamma_1r^{-\alpha}+\gamma_2),
\]
and since $\rho\in L^m(M)$, we conclude $\iint_{h(\dist(x, y))\leq 0}h(\dist(x, y))\rho(x)\rho(y)\d x\d y>-\infty$. 

\section{Proof of Proposition \ref{spresp}}
\label{appendix:spresp}
\setcounter{equation}{0}

From Lemma \ref{esth}, for any $\rho_k\in\mathcal{P}_{ac}(M)$, we get
\[
E[\rho_k]\geq \frac{1}{m-1}\int_M \rho_k(x)^m\d x -\frac{\gamma_1}{2}\iint_{M\times M}\frac{\rho_k(x)\rho_k(y)\d x\d y}{\dist(x, y)^\alpha}-\frac{\gamma_2}{2},
\]
for two non-negative constants $\gamma_1$ and $\gamma_2$.  Also, by the HLS inequality from Theorem \ref{thm:HLSM} we estimate 
\[
\iint_{M \times M}\frac{\rho_k(x)\rho_k(y)\d x\d y}{\dist(x, y)^\alpha}\leq r^{-\alpha}
+\tilde{C}(\alpha, \dm, r, c_m)\left(\int_M\rho_k(x)^m\d x\right)^{\frac{\alpha}{\dm(m-1)}},
\]
for some $r>0$. By combining the two inequalities above we then get
\begin{equation}\label{finest}
E[\rho_k]\geq \frac{1}{m-1}\int_M \rho_k(x)^m\d x-\frac{\gamma_1\tilde{C}(\lambda, \dm, r, c_m)}{2}\left(\int_M\rho_k(x)^m\d x\right)^{\frac{\alpha}{\dm(m-1)}}-\frac{1}{2}(\gamma_1 r^{-\alpha}+\gamma_2).
\end{equation}

If $\rho$ contains a singular part, then for any sequence $\{\rho_k\}_{k\geq1}\subset L^m(M) \cap \mathcal{P}(M)$ such that $\rho_k\rightharpoonup\rho$, one has
\[
\lim_{k\to\infty}\int_M\rho_k(x)^m\d x=\infty.
\]
Since $0<\alpha<\dm(m-1)$, this implies that the right hand side of \eqref{finest} tends to infinity, and hence, $E[\rho]=\infty$.

\

\noindent{\bf Acknowledgments.}
JAC was partially supported by the Advanced Grant Nonlocal-CPD (Nonlocal PDEs for Complex Particle Dynamics: Phase Transitions, Patterns and Synchronization) of the European Research Council Executive Agency (ERC) under the European Union Horizon 2020 research and innovation programme (grant agreement No. 883363). JAC acknowledge support by the EPSRC grant EP/V051121/1. JAC was also partially supported by the “Maria de Maeztu” Excellence Unit IMAG, reference CEX2020-001105-M, funded by MCIN/AEI/10.13039/501100011033/. RF was supported by NSERC Discovery Grant PIN-341834 during this research. RF also acknowledges an NSERC Alliance International Catalyst grant, which supported H. Park's visit to University of Oxford, where the research presented in this paper was initiated.

\bibliographystyle{abbrv}
\def\url#1{}
\bibliography{lit-H.bib}

\end{document}